\definecolor{vert}{rgb}{0,0.6,0}
\theoremstyle{plain}
\newtheorem{thm}{Theorem}
\newtheorem{defn}[thm]{Definition}
\newtheorem{lem}[thm]{Lemma}
\newtheorem{cor}[thm]{Corollary}
\newtheorem{prop}[thm]{Proposition}
\theoremstyle{remark}
\newtheorem{rem}{Remark}[section]
\newcommand{\C}{\operatorname{cap}_Q}
\newcommand{\N}{\mathbb{N}}
\newcommand{\R}{\mathbb{R}}
\newcommand{\al}{\alpha}
\newcommand{\eps}{\epsilon}
\newcommand{\Om}{\Omega}
\newcommand{\sub}{\subset}
\newcommand{\ra}{\rightarrow}
\newcommand{\loc}{{\rm loc}\,}
\newcommand{\diam}{\operatorname{diam}}
\newcommand{\dist}{\operatorname{dist}}
\renewcommand{\:}{\colon}
\newcommand{\capa}{\operatorname{cap}}
\newcommand{\Capab}{\C(\{g\ge \beta\}, \{g>\alpha\})}
\newcommand{\Capabp}{\C(\{g\ge \beta'\}, \{g>\alpha'\})}
\numberwithin{thm}{section} 
\numberwithin{equation}{section}
\begin{document}
\title[\ Green functions in metric measure spaces]
{  Green functions in  metric measure spaces}
\author{Mario Bonk}
\address{Mario Bonk, Department of Mathematics, University of California, Los Angeles,
CA 90095, USA\\
{\tt mbonk@math.ucla.edu}}
\author{Luca Capogna}
\address{Luca Capogna, Department of Mathematics and Statistics, Smith College, Northampton, MA 01063, USA \\
{\tt  lcapogna@smith.edu}}

\author{Xiaodan Zhou}
\address{Xiaodan Zhou, Analysis on Metric Spaces Unit, Okinawa Institute of Science and Technology Graduate University, Okinawa 904-0495, Japan\\ 
{\tt xiaodan.zhou@oist.jp}}

\date{\today}
\subjclass[2010]{30L99, 31C15, 35J92}
\keywords{Green function, $p$-Laplacian, metric measure spaces, PI-spaces}

\thanks{M.B.\ was partially supported by NSF award DMS-1808856}
\thanks{L.C.\ was partially supported by NSF award  DMS-1955992}
\thanks{X.Z.\ was partially supported by JSPS award 22K13947}

\begin{abstract} 
We study existence and uniqueness of Green functions for the Cheeger $Q$-Laplacian in metric measure spaces that are Ahlfors $Q$-regular and support a $Q$-Poincar\'e inequality with $Q>1$.  We prove uniqueness of Green functions both in the case of relatively compact domains, and in the global (unbounded) case. We also prove existence of global Green functions in unbounded spaces, complementing the existing results in  relatively compact domains proved recently in \cite{BBLe20}.
\end{abstract}

\maketitle

\section{Introduction}
In  Euclidean space $\R^n$, a result of Kichenassamy and Ver\'on \cite[Theorem 2.2]{KV86} shows that the $n$-Laplace operator
$$\mathcal{L}_n u:= \text{div}(|\nabla u|^{n-2} \nabla u) $$
admits a unique  global Green function, i.e., there is a unique, properly normalized singular solution which blows up to $+\infty$ at the origin and converges to $-\infty$ at infinity.  Their proof of uniqueness applies also to the $p$-Laplacian $\mathcal{L}_p$ in the range $1<p\le n$ and  is based on $C^{1,\alpha}$ estimates for  
$p$-harmonic functions. In the conformal case $p=n$, the argument  was later simplified and extended to the Riemannian and Carnot group setting in  \cite{Hol90, BHT02} thus establishing uniqueness of global Green functions  in these geometries.  

The purpose of this paper is to extend this uniqueness result to the setting of 
 complete metric spaces $(X,d, \mu)$ equipped with an Ahlfors regular Borel measure $\mu$, and a Poincar\'e inequality.  In the following we describe this structure more in detail and make sense of the notion of $p$-Laplacian in this generality. 
 
 Let 
 $u\:X\to \R$ be a function on $X$. We say that a Borel measurable 
 function $g\:X\to [0,\infty]$ is an {\em upper gradient of $u$} if 
 $$|u(x)-u(y)|\le \int_\gamma g\, ds,$$ for all $x,y\in X$ and 
  all rectifiable paths 
 $\gamma$ in $X$ joining $x$ and $y$. 
For $p\ge 1$ the  $L^p$-integrable  functions $u$ on $X$ that have 
  an $L^p$-integrable upper gradient form the generalized {\em Sobolev} space (or {\em Newtonian} space) $N^{1,p}(X,d,\mu)$ (see \cite{Sha00,Sha01}). 
  
  Throughout this paper we will assume that  $(X,d,\mu)$ is a  {\em PI-space}, 
  that is, a doubling metric measure spaces supporting a $p$-Poincar\'e inequality for some $p\ge 1$ (see \eqref{poincareinequality}).  By a  breakthrough result of  Cheeger \cite{Che99}, the space $(X,d,\mu)$ can then
   be equipped with a  {\em measurable differentiable structure}. 
 Accordingly,  for every $u\in N^{1,p}(X,d,\mu)$ one can define a differential $Du$, whose pointwise Euclidean norm $|Du|$ is comparable to the {\em minimal upper gradient}  $g_u$ of  $u$ (see Section~\ref{Newtonian spaces} below for details).

 Let $\Om\subset X$ be a {\em region}, that is, an open and connected set. Then for each $1<p<\infty$ one can define two notions of $p$-harmonic functions on $\Omega$ (for the definition of the Sobolev spaces used here, see Section~\ref{Newtonian spaces}):

 \smallskip
(i) {\em $p$-energy minimizers} are functions 
 $u\in N^{1,p}_{loc}(\Om)$
 such that
for all subsets $U \subset \subset \Omega$
 and for all functions $\phi\in N^{1,p}_0(U)$,
\begin{equation}\label{pmin}
\int_U g^p_u\le \int_U g^p_{u+\phi}.
\end{equation}
Here and in similar expressions, integration is with respect to the underlying doubling measure $\mu$.   If the ambient space admits a smooth differentiable structure, then one can derive a (second order) Euler-Lagrange equation  that characterizes minimizers of  $p$-energy, but in the general case such a PDE is not 
available. Accordingly, in this paper we will use the term {\em $p-$energy minimizer}   in the sense of \eqref{pmin} to emphasize the non-availability of a PDE.

\smallskip 
(ii)   A function $u\in N^{1,p}_{loc}(\Om)$ is {\em (Cheeger)  
$p$-harmonic} in $\Om$ for a choice of a fixed measurable differentiable structure on the underlying space $(X,d,\mu)$   we have   
\begin{equation}\label{pde-eq}
\int_\Om |Du|^{p-2}Du\cdot D\phi =0\end{equation}
for all  functions $\phi\in N^{1,p}_0(\Om)$. 

\smallskip
Equation \eqref{pde-eq}  is  a distributional formulation of an underlying PDE,  analogue to the Euclidean $p-$Laplacian; it is linear in case $p=2$, but non-linear if $p\ne 2$. 
While  the notion of $p$-energy minimizers  in the sense of \eqref{pmin} only  depends on  the metric  and the measure of the underlying space, $p$-harmonicity depends on the choice 
of a measurable differentiable structure (more precisely, on the choice of 
fiberwise inner product on the measurable cotangent bundle; see Section~\ref{differentiability}).  In this paper, we will assume that a measurable differentiable structure on the underlying space has been fixed  and 
$p$-harmonicity is with respect to this  fixed structure.

One can define a notion of fundamental solution, and Green function for the Cheeger $p$-Laplacian (see Definitions \ref{bd def} and \ref{global-Green} below). As in the Euclidean and Riemannian setting, such functions are the cornerstone of a non-linear potential theory, which in turn has a broad range of applications, from  quasiconformal geometry to boundary value problems; see for instance \cite{Hol90, HK98, CHSC01, CHSC02, BMS01,HST01}.

Our first main result is about the  existence and uniqueness of non-negative Green functions of the Cheeger $p$-Laplacian in relatively compact subregions  
of  PI-spaces. 
%Our main results are summarized in the following theorems. 

%\vspace{-0.25cm}

\begin{thm}\label{1.1}
Let $(X, d, \mu)$ be a complete doubling metric measure space. Let ${p}>1$ and suppose $(X,d,\mu)$ supports a ${p}$-Poincar\'e inequality. We  fix a measurable differentiable structure on $(X, d, \mu)$. % and assume that we have a uniform upper mass bound $\mu(B(x,r))\lesssim r^Q$ for all balls $B(x,r)\sub X$.

 Let $\Omega$ be a relatively compact region in $X$,  and $x_0\in \Omega$. Assume that every point $x\in \partial \Omega$ is a regular point for the Dirichlet problem for the $p$-Laplacian.   Then the  following statements hold:

\begin{enumerate}[label=\text{(\roman*)},font=\normalfont,leftmargin=*]

\item If the Sobolev $p$-capacity of the complement of $\Om$ is positive, i.e., if  $C_p(X\setminus\Omega)>0$ (see \eqref{definition-capacity} for the definition), then there exists a Green  function on $\Omega$ for the Cheeger 
$p$-Laplacian with singularity at $x_0$. 
\smallskip
\item  Assume in addition that $(X,d,\mu)$ is $Q$-Ahlfors regular and {$p=Q>1$}. Let  $u, v$ be  two Green functions on $\Omega$ for the Cheeger  $Q$-Laplacian with singularity at $x_0$. If there exists $y_0\in \Omega\setminus \{x_0\}$ such that $u(y_0)=v(y_0)$, then $u=v$ on $\Omega\setminus\{x_0\}$.
\end{enumerate}
\end{thm}
The existence part of Theorem \ref{1.1} was recently established  
 in \cite{BBLe20};  it is an analog for the Cheeger $p$-Laplacian of the existence of $p$-energy minimizers  in the sense of \eqref{pmin} proved by  Holopainen and Shanmugalingam in \cite{HS02}. In the special case of Carnot groups, the existence  of Green functions for the $p$-Laplacian was proved  in \cite{DG00} for $1<p<Q$, where $Q$ is the homogeneous dimension. Our contribution is the uniqueness statement (ii), which  rests on a more involved argument, and extends  an earlier result of Holopainen \cite{Hol90} from the Riemannian setting to the class of Ahlfors regular PI-spaces.

\begin{rem} In our proof of  uniqueness of  Green functions in Theorem \ref{1.1} we never use  the PDE \eqref{pde-eq} directly, and in fact the same arguments imply the uniqueness of  $Q-$energy minimizing  Green functions   in the sense of \eqref{pmin}, as studied for instance in \cite{HS02,KS01, BBLe17, BBLe20, BBLe23}, {\it without having to rely on a Cheeger structure}. We note that unlike in the Euclidean case \cite[Theorem 2.2]{KV86},  for $1<p<Q$ the uniqueness for $p-$energy  minimizers in the sense of \eqref{pmin} may fail in this more general setting.
In fact,  for   $1<p<n$, the recent work   \cite{BBEZ24} of one of us jointly with A. Bj\"orn, J. Bj\"orn, and Eriksson-Bique, establishes the existence of  an Ahlfors $n-$regular Finsler metric in $\mathbb{R}^n$ with   uncountably many distinct    $p-$energy minimizing  Green functions in the sense of \eqref{pmin}, in a disk. Uniqueness of the Cheeger $p-$energy minimizers in this setting may still hold, but the proof would require a strategy  different from our current approach.
\end{rem}
With the added assumption of Ahlfors $Q$-regularity, we also establish existence and uniqueness of unbounded global Green functions for the Cheeger $Q$-Laplacian. 

\begin{thm}\label{thm:ubddGreen} Let $(X, d, \mu)$ be an unbounded, complete Ahlfors $Q$-regular metric measure space that supports a $Q$-Poincar\'e inequality for $Q>1$. Then for each $x_0\in X$ the following statements are true: 
\begin{enumerate}[label=\text{(\roman*)},font=\normalfont,leftmargin=*]

\item  The space is $Q$-parabolic and  there exists a global $Q$-harmonic Green function for the $Q$-Laplacian unbounded on both sides, with singularity at $x_0$.

\item For each global $Q$-harmonic Green function $u$, there exists a constant $C\ge 1$ depending on the structure constants of $X$ and the value of $u$ on a fixed radius such that for all $x\in X\setminus \{x_0\}$, we have
 \[
 C^{-1}\log\bigg(\frac{1}{d(x,x_0)}\bigg)-C\le u(x)\le C\log\bigg(\frac{1}{d(x,x_0)}\bigg)+C.
 \]

\item   
%Assume $Q\ge 2$.
 Let $u, v$ be two global $Q$-harmonic Green functions with singularity at $x_0$ in $X$. If there exists $y_0\in X\setminus\{x_0\}$ such that $u(y_0)=v(y_0)$, then $u=v$ on $X\setminus\{x_0\}$.
\end{enumerate}
 \end{thm}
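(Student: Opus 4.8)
The plan is to reduce the uniqueness of unbounded global $Q$-harmonic Green functions to the uniqueness statement in Theorem \ref{1.1}(ii) by an exhaustion argument. First I would fix two global Green functions $u,v$ with singularity at $x_0$ agreeing at some $y_0\neq x_0$, and choose an exhaustion of $X$ by relatively compact domains $\Omega_1\subset\subset\Omega_2\subset\subset\cdots$ with $x_0,y_0\in\Omega_1$. Since $X$ is $Q$-Ahlfors regular and supports a $Q$-Poincaré inequality, part (i) (or the existence part of Theorem \ref{1.1}) produces for each $k$ a Green function $g_k$ for the Cheeger $Q$-Laplacian on $\Omega_k$ with singularity at $x_0$, normalized so that $g_k(y_0)$ matches the common value $u(y_0)=v(y_0)$; here I would use that $Q$-Ahlfors regularity guarantees every boundary point of a nice exhausting domain is regular for the Dirichlet problem, so Theorem \ref{1.1}(ii) applies on each $\Omega_k$.

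The next step is to show that $g_k\to u$ and $g_k\to v$ locally uniformly on $X\setminus\{x_0\}$, which forces $u=v$. For this I would establish the two-sided logarithmic bound of part (ii) uniformly for the truncated Green functions $g_k$ (away from $x_0$ the upper bound, and a matching lower bound near $x_0$ coming from the capacity estimates built into the construction in part (i)); this gives local equiboundedness of $\{g_k\}$ on compact subsets of $X\setminus\{x_0\}$. Combined with the interior Harnack inequality and local Hölder regularity for Cheeger $Q$-harmonic functions (valid in PI-spaces), I extract a locally uniformly convergent subsequence whose limit $w$ is $Q$-harmonic in $X\setminus\{x_0\}$, has the correct singular behaviour at $x_0$ (because the $g_k$ share the same singularity structure and the logarithmic bounds pin down the asymptotics), and satisfies $w(y_0)=u(y_0)$. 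Hence $w$ is itself a global $Q$-harmonic Green function. Applying Theorem \ref{1.1}(ii) on each $\Omega_k$ to the pair $(g_k, u|_{\Omega_k})$ — after checking that the restriction of the global Green function $u$ to $\Omega_k$ has the right boundary behaviour, which is where $Q$-parabolicity and the logarithmic upper bound from part (ii) are used to control $u$ on $\partial\Omega_k$ — I conclude $g_k\to u$, and symmetrically $g_k\to v$, so $u=v$ on $X\setminus\{x_0\}$.

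The hypothesis $Q\ge 2$ enters, as in the Riemannian case of Holopainen \cite{Hol90}, precisely in the uniqueness step on bounded domains that Theorem \ref{1.1}(ii) already invokes; so once that theorem is in hand the present statement does not need $Q\ge 2$ beyond citing it. An alternative, more direct route avoids the exhaustion: given $u,v$ as above, consider the comparison of $u$ and $v$ directly on $X\setminus\{x_0\}$. Using the logarithmic two-sided bounds of part (ii), both $u$ and $v$ have the same leading asymptotics near $x_0$; on the ``outer'' region, $Q$-parabolicity means there are no nonconstant positive $Q$-superharmonic functions bounded below, so the difference $u-v$, which is $Q$-harmonic away from $x_0$ in a weak sense after a Caccioppoli/energy comparison, cannot have a strict sign — and matching the value at $y_0$ rules out a constant offset, giving $u\equiv v$.

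The main obstacle I anticipate is the passage to the limit: proving that the limit of the exhausting Green functions $g_k$ is genuinely a global Green function with the prescribed singularity rather than a function that has lost mass at $x_0$ or degenerated. This requires a quantitative lower bound near the singularity that is \emph{uniform in $k$}, which in turn relies on the capacitary lower estimates from the construction in part (i) together with $Q$-Ahlfors regularity; making the normalization at $y_0$ compatible with these bounds, and ruling out that $g_k$ converges to something identically $+\infty$ or to a function with a weaker singularity, is the technical heart of the argument. A secondary difficulty is verifying the boundary regularity needed to apply Theorem \ref{1.1}(ii) on each $\Omega_k$; this is handled by choosing the exhausting domains so that $X\setminus\Omega_k$ is uniformly ``fat'' in capacity, which is automatic in a $Q$-Ahlfors regular space supporting a $Q$-Poincaré inequality via the Wiener-type criterion.
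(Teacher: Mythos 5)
Your proposal diverges substantially from the paper's argument, and the route you choose has gaps that I do not see how to close. The central problem is the step where you apply Theorem \ref{1.1}(ii) to the pair $(g_k, u|_{\Omega_k})$. The restriction of a global Green function $u$ to $\Omega_k$ is \emph{not} a Green function on $\Omega_k$ in the sense of Definition \ref{bd def}: it does not vanish quasi-everywhere on $X\setminus\Omega_k$ (its boundary values tend to $-\infty$ as $k\to\infty$), and it does not satisfy the capacity normalization $\capa_Q(\{u\ge b\},\{u>a\})=(b-a)^{1-Q}$ \emph{relative to} $\Omega_k$. If you try to repair this by subtracting the boundary values of $u$ on $\partial\Omega_k$, you are left with showing that the resulting limit of normalized truncated Green functions equals $u$ rather than some other global Green function --- which is exactly the uniqueness statement you are trying to prove, so the argument is circular. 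Your ``alternative, more direct route'' has a second fatal issue: for $Q\neq 2$ the difference $u-v$ of two $Q$-harmonic functions is not $Q$-harmonic (the equation is nonlinear), so the parabolicity/Liouville-type argument for $u-v$ does not get off the ground; the phrase ``after a Caccioppoli/energy comparison'' is precisely where all the work lies. Relatedly, the two-sided bound of part (ii), $C^{-1}\log(1/d)\le u\le C\log(1/d)$ with $C>1$, does \emph{not} imply that $u-v$ is bounded near $x_0$ (the gap is of order $(C-C^{-1})\log(1/d)$), so it cannot ``pin down the asymptotics'' in the way you need.

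The paper's proof of (iii) is built on three ingredients you do not supply. First, a sharp additive asymptotic (Lemma \ref{global bound}): using the exact capacity normalization (iv) of Definition \ref{global-Green} on level sets, one shows $u(x)=\capa_Q(\overline B_r,B_{r_0})^{1/(1-Q)}+O(1)$ with a \emph{universal} leading term, whence $|u-v|\le C$ globally; this is strictly stronger than the multiplicative bound of part (ii). Second, a Caccioppoli-type energy estimate (Lemma \ref{gradient estimates}) giving $\int_{X\setminus\{x_0\}}|Du-Dv|^Q\le C$ via a logarithmic cutoff --- and \emph{this} is where $Q\ge 2$ enters, through the monotonicity inequality $|Du-Dv|^Q\le C\langle |Du|^{Q-2}Du-|Dv|^{Q-2}Dv,\,D(u-v)\rangle$, not through the bounded-domain uniqueness of Theorem \ref{1.1}(ii), which holds for all $Q>1$; your attribution of the hypothesis $Q\ge2$ is therefore incorrect. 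Third, a condenser argument: if $u-v$ failed to have limits at $x_0$ and at $\infty$, one could build condensers of arbitrarily large $Q$-capacity admitting a fixed admissible function with energy controlled by the gradient estimate, a contradiction; the two limits are then shown to be equal using normalization (iv), and the comparison principle on large annuli together with $u(y_0)=v(y_0)$ finishes the proof. Without these three steps (or substitutes for them), your proposal does not yield uniqueness.
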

% For related results in the bounded case, with general $p$ and less restrictive hypotheses, see \cite{BBLe23}.
 
Global Green functions are defined  in Definition \ref{global-Green}.  
%Parts (i) and (ii) of the above statement continue to hold  if one considers only properties (i),(ii),(iii) in Definition \ref{global-Green} and one substitutes $Q$-energy minimizers with  $Q$-harmonic functions. In other words, the actual PDE is not necessary to prove the existence and the asymptotic behavior of the global Green function. 

In the special case of  Carnot groups,  the existence part of this theorem had been established earlier by Heinonen and Holopainen \cite{HH97}  and  the uniqueness part  by Balogh, Holopainen and Tyson \cite{BHT02}.

\begin{rem} As in the bounded domain case, in  our proof of existence and uniqueness of  global Green functions  we never use  the PDE \eqref{pde-eq} directly, and so it would appear as if one could use the same arguments to prove existence and uniqueness of global $Q-$energy minimizing  Green functions  in the sense of \eqref{pmin}.
However, while the proof of existence goes through without problems,  the uniqueness part of the proof of  Theorem \ref{thm:ubddGreen} relies in a crucial way  on the Clarkson inequalities in Lemma \ref{clarkson-lemma}, and such estimates are only known in the presence of an inner product structure on the target space of the functions involved. In our specific application of the Clarkson inequality, this translates into the need of having an inner product on the generalized cotangent space of the metric space, which is guaranteed by the Cheeger differentiability structure. 

While Theorem \ref{thm:ubddGreen} shows that the inner product structure hypothesis is sufficient to prove uniqueness,  in a sense this is also a necessary assumption. In fact,  one of the results in   \cite{BBEZ24} is a construction of an Ahlfors $n-$regular Finsler metric in $\mathbb{R}^n$ with   uncountably many    global Finsler $n-$energy minimizing  Green functions in the sense of \eqref{pmin} which have distinct minimal upper gradients. While these Finsler spaces are PI, and hence admit a Cheeger structure, Theorem \ref{thm:ubddGreen} cannot be applied to  the minimizers of the Finsler $n-$energy in the sense of \eqref{pmin} as these are only quasiminimizers of the Cheeger $n-$energy.
%{Hence, at this time we still need to rely on  Cheeger's differentiability, and our proof of uniqueness for global Green functions does not immediately generalize to the setting of minimal upper gradients minimizers 
\end{rem}

 Thanks to the work of Danielli, Garofalo and Marola \cite{GM10, DGM10} and Bj\"orn,  Bj\"orn and Lehrb\"ack \cite{BBLe17, BBLe20, BBLe23}, one has estimates on the blow up rate of  singular solutions near the pole, generalizing earlier work of Serrin \cite{Ser65}  from the Euclidean setting to the PI-setting. In the same paper, Serrin proved also  that any singular solutions bounded from below is a multiple of a fundamental solution (i.e.,  when applying the operator, one obtains a multiple of Dirac's delta at the pole).  We note that this result  continues to hold in the PI setting. 

\begin{thm}\label{fundamental solution}
Let $(X, d, \mu)$ be a complete Ahlfors $Q$-regular metric measure space that supports a $Q$-Poincar\'e inequality for $Q>1$. Let $\Om$ be a relatively compact domain in $X$ and let $u$ be a $Q$-harmonic Green function in $\Omega$ with singularity at $x_0\in \Om$. There exists $K\in\R$ such that for all Lipschitz continuous function $\phi\in N^{1,Q}_0(\Omega)$ one has 
$$\int_\Om |Du|^{Q-2}Du\cdot D\phi  = K\phi(x_0).$$
\end{thm}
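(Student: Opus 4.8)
The plan is to regard the quantity on the left as a linear functional on Lipschitz test functions, to localize it at the pole $x_0$ by means of cut-off functions, and to absorb the resulting error into a quantitative bound for $\int_{B(x_0,\varepsilon)}|Du|^{p-1}\,d\mu$. For a Lipschitz $\phi\in N^{1,p}_0(\Omega)$ put $L(\phi):=\int_\Omega |Du|^{p-2}Du\cdot D\phi\,d\mu$; this is finite because $u\in N^{1,p}(\Omega\setminus B(x_0,\delta))$ for every $\delta>0$ (a consequence of the truncation property in the definition of a Green function together with the maximum principle), and because $|Du|^{p-1}\in L^1$ near $x_0$, as established in the last step. First I would record that $L(\phi)=0$ whenever $\phi$ is Lipschitz, lies in $N^{1,p}_0(\Omega)$, and vanishes in a neighborhood of $x_0$: such a $\phi$ is the $N^{1,p}$-limit of functions with compact support in the open set $\Omega\setminus\{x_0\}$, which are admissible test functions for the Cheeger $p$-harmonicity of $u$ there, and one passes to the limit using that $u\in N^{1,p}$ away from $x_0$.

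Next I would localize. Fix standard distance cut-offs $\eta_\varepsilon$ with $\eta_\varepsilon\equiv 1$ on $B(x_0,\varepsilon/2)$, $\supp\eta_\varepsilon\subset B(x_0,\varepsilon)$, and $|D\eta_\varepsilon|\lesssim 1/\varepsilon$, for $\varepsilon$ small enough that $B(x_0,\varepsilon)\subset\subset\Omega$. For $\varepsilon'<\varepsilon$ small, $\eta_\varepsilon-\eta_{\varepsilon'}$ vanishes near $x_0$, so by the previous step $K:=L(\eta_\varepsilon)$ is independent of $\varepsilon$. Given a Lipschitz $\phi\in N^{1,p}_0(\Omega)$, the function $(1-\eta_\varepsilon)\phi$ vanishes near $x_0$, hence $L((1-\eta_\varepsilon)\phi)=0$ and
\[
L(\phi)=L(\eta_\varepsilon\phi)=\phi(x_0)\,L(\eta_\varepsilon)+L\bigl(\eta_\varepsilon(\phi-\phi(x_0))\bigr)=K\,\phi(x_0)+L\bigl(\eta_\varepsilon(\phi-\phi(x_0))\bigr).
\]
So it suffices to prove $L(\eta_\varepsilon(\phi-\phi(x_0)))\to 0$ as $\varepsilon\to 0$. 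Since $D(\eta_\varepsilon(\phi-\phi(x_0)))=\eta_\varepsilon\,D\phi+(\phi-\phi(x_0))\,D\eta_\varepsilon$ and, because a complete doubling PI-space is quasiconvex, $\phi$ is Lipschitz with respect to $d$ with constant $\lesssim\|D\phi\|_{\Li}$, so that $|\phi-\phi(x_0)|\lesssim\|D\phi\|_{\Li}\,\varepsilon$ on $B(x_0,\varepsilon)$, one gets
\[
\bigl|L\bigl(\eta_\varepsilon(\phi-\phi(x_0))\bigr)\bigr|\lesssim\|D\phi\|_{\Li}\int_{B(x_0,\varepsilon)}|Du|^{p-1}\,d\mu.
\]
Thus the theorem reduces to the single claim $\int_{B(x_0,\varepsilon)}|Du|^{p-1}\,d\mu\to 0$ as $\varepsilon\to 0$.

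This claim is where I expect the main difficulty to lie. On the dyadic annuli $A_j:=B(x_0,2^{-j})\setminus B(x_0,2^{-j-1})$ I would combine the Caccioppoli inequality for the Cheeger $p$-harmonic function $u$ on a slightly enlarged annulus with the sharp near-pole growth of Green functions coming from Danielli--Garofalo--Marola \cite{GM10,DGM10}, namely $u\lesssim d(\cdot,x_0)^{-(Q(x_0)-p)/(p-1)}$ when $p<Q(x_0)$ and $u\lesssim\log(1/d(\cdot,x_0))$ when $p=Q(x_0)$, together with the pointwise mass bound $\mu(B(x_0,r))\approx r^{Q(x_0)}$ provided by the pointwise dimension. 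Bounding $\osc_{A_j}u$ by $\sup_{A_j}u$, controlling $\int_{A_j}|Du|^p$ by Caccioppoli, and then applying Hölder with exponents $\tfrac{p}{p-1}$ and $p$ yields $\int_{A_j}|Du|^{p-1}\,d\mu\lesssim 2^{-j}$ (with at most an arbitrarily small loss in the exponent if the pointwise dimension controls $\mu(B(x_0,r))$ only in a weak sense, and an extra polynomial-in-$j$ factor in the borderline case $p=Q(x_0)$). Summing the resulting geometric series over $j\ge j_0$ gives a tail $\lesssim 2^{-j_0}\to 0$, and feeding this back into the displayed identity gives $L(\phi)=K\phi(x_0)$ for every Lipschitz $\phi\in N^{1,p}_0(\Omega)$, as required.

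A word on the hypotheses: the assumption $1<p\le Q(x_0)$ is exactly what makes $\{x_0\}$ a $p$-polar set, so that a Green function with a genuine singularity at $x_0$ exists, while keeping the singularity mild enough that $|Du|^{p-1}$ is locally integrable and the annular sums above converge; for $p>Q(x_0)$ the statement degenerates. If the precise form of the Danielli--Garofalo--Marola growth estimate is not directly available in the conformal borderline case, one can instead obtain the required upper bound for $u$ near $x_0$ by comparing, via the maximum principle, with the Cheeger $p$-harmonic Green function of a small ball centered at $x_0$, whose asymptotics are explicit up to constants.
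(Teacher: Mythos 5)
Your proposal is correct and follows essentially the same route as the paper: localize at the pole with a cut-off, observe that the constant $K$ is well defined because the difference of two cut-offs is an admissible test function for $p$-harmonicity in $\Omega\setminus\{x_0\}$, and absorb the error via the bound $\int_{B(x_0,\varepsilon)}|Du|^{p-1}\,d\mu\to 0$, obtained from a Caccioppoli estimate combined with the Danielli--Garofalo--Marola growth rates and H\"older's inequality. The only cosmetic difference is that you run the Caccioppoli/H\"older argument on dyadic annuli and sum, whereas the paper applies it on a single ball $B(x_0,r)$ directly.
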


%Theorem \ref{fundamental solution} shows that every solution with a positive isolated singularity has the attributes of a fundamental solution. 
A similar result is proved by Bj\"{o}rn, Bj\"{o}rn and Lehrb\"{a}ck in \cite[Theorem 13.3]{BBLe20}.

This paper is organized in the following way. In Section 2, we give a brief review of some basic definitions and well-known results. In Section 3, we define a Green function on a relatively compact domain and show the existence and uniqueness under suitable conditions. In Section 4, we consider Green functions defined on  an unbounded space $X$. In Section 5 we provide a proof for Theorem \ref{fundamental solution}.

\subsection*{Acknowledgments} The first-named author would like to thank Bruce Kleiner for many discussions related to Green functions on metric measure spaces during  a visit to the Courant Institute in January 2017. 
 In particular,  a proof of Theorem~\ref{thm:ubddGreen} for $Q\ge 2$ emerged from these  discussions. 
  
 The authors are also  grateful to Anders Bj\"orn,  Jana Bj\"orn, Sylvester Eriksson-Bique and Nageswari Shanmugalingam for several conversations about the topics in this paper.

\section{Preliminaries}

Throughout this paper,  $(X,d,\mu)$ will be a space $X$ equipped with a metric $d$ and a Borel measure
$\mu$. We always assume that $X$ is complete and that  $0<\mu(B)<\infty$ for every  ball $B\subset X.$ 

We denote by 
$$B(x,r)=\{y\in X: d(x,y)<r\}$$ the open ball of radius $r>0$ centered at $x\in X$. If $B=B(x,r)$ is any (open) ball and $\lambda\ge 1$, then we use the notation $\lambda B=B(x, \lambda r)$. We write $\diam(A) $ for the diameter of a set $A\sub X$.

We say the measure $\mu$ or the space $(X,d,\mu)$  is {\em doubling} if there exists a constant $C\ge 1$ such that $\mu(2B)\le C\mu(B)$ for every open ball $B\subset X$. 
We say that  $(X,d,\mu)$ is {\em Ahlfors $Q$-regular} for $Q\ge 1$ if there is a  constant $C\ge 1$ such that
\[
C^{-1}r^Q\le\mu(B(x,r))\le Cr^Q
\]
for all $x\in X$ and $0<r\le{\rm diam}(X)$.

The space  $(X, d, \mu)$ is said to support a {\em $p$-Poincar\'e inequality}, $p \ge 1$,  if there exist constants $C>0$ and $ \lambda\ge1$ such that for every measurable function 
$u\: X\to \mathbb{R}$ and every upper gradient $g\:X \to [0,\infty]$ of $u$, the pair $(u,g)$ satisfies
\begin{equation}\label{poincareinequality}
\frac {1}{\mu(B)}\int_B |u-u_B|\, d\mu\le C\diam (B)\left( \frac {1}{\mu(\lambda B)}\int_{\lambda B}g^p\, d\mu\right)^{1/p}
\end{equation}
on every open ball $B\subset X$.  Here $u_B= \frac {1}{\mu(B)}\int_B u\, d\mu$.

 We say  $(X,d,\mu)$ is a {\em $p$-PI-space}, $p\ge 1$,  if it is doubling and  supports a $p$-Poincar\'e inequality. Finally, $(X,d,\mu)$ is a {\em PI-space} if it is a $p$-PI-space for some $p\ge 1$.

\subsection{Newtonian spaces.}\label{Newtonian spaces}
Let $u\: X\to \mathbb{R}$ be a function. A Borel function $g\:X\to [0,\infty]$ is said to be a {\em $p$-weak upper gradient}  of $u$ if
\[|u(\gamma(a))-u(\gamma(b))|\le \int_{\gamma}g\ ds\]
holds for all rectifiable paths  $\gamma\:[a,b]\to X$ outside a family  of paths  with vanishing $p$-modulus. See \cite{HK98, Che99, Sha00} for 
more details on upper gradients, including the definition of $p$-modulus of a path family. 

For $1\le p<\infty$ the space  $\widetilde{N}^{1,p}(X,d,\mu)$ consists of all $L^p$-integrable functions on $X$ for which there exists a $p$-integrable $p$-weak upper gradient. For each $u\in \widetilde{N}^{1,p}(X,d,\mu)$ we define
\[
\|u\|_{\widetilde{N}^{1,p}(X,d,\mu)}=\|u\|_{L^p(X)}+\inf_g\|g\|_{L^p(X)},
\]
where the infimum is taken over all $p$-weak upper gradients $g$ of $u$.
An equivalence relation in $\widetilde{N}^{1,p}(X,d,\mu)$ is defined as $u \sim v$ if $ \|u-v\|_{\widetilde{N}^{1,p}(X,d,\mu)}=0$. By definition the {\em Newtonian space} $N^{1,p}(X,d,\mu)$ is the quotient space $\widetilde{N}^{1,p}(X,d,\mu)/ \sim\ $. As usual in this and similar contexts, we think of the elements in $N^{1,p}(X,d,\mu)$  as equivalence classes, but rather 
as represented by each function in its class.  With this understanding, $N^{1,p}(X,d,\mu)$  is a Banach space equipped with the norm $\|u\|_{N^{1,p}}=\|u\|_{\widetilde{N}^{1,p}}.$ For each $u\in N^{1,p}(X,d,\mu)$ there exists an essentially unique {\em minimal $p$-weak upper gradient}  $g_u\in L^p$ of $u$ in the sense that if $g\in L^p$ is another $p$-weak upper gradient of $u$, then $g_u\le g$ a.e.\ on $X$.
 
The space  $N_{\loc}^{1,p}(\Omega)$ consists of all 
function $u\:\Omega \ra \R$  such that  if $u|_{\Om'} \in N^{1,p}(\Omega')$ for every open set $\Omega'\subset \subset\Omega$.

For $p\ge 1$ the {\em Sobolev $p$-capacity} of a set $E\subset X$ is defined as \begin{equation}\label{definition-capacity}%\textcolor{blue}
{C_p}(E)=\inf_u \| u \|^p_{N^{1,p}(X)},\end{equation}  where the infimum is taken over all $u\in N^{1,p}(X)$ such that $u\ge 1$ on $E$. A property for points $x\in X$  is said to hold {\em $p$-quasi-everywhere}, abbreviated 
{\em $p$-q.e.},  if it is true outside a set of $p$-capacity zero. 

For  $\Omega\subset X$ we denote by ${N}^{1,p}_0(\Omega)$ is the set  of all functions $u\in N^{1,p}(X)$ whose representative functions vanish $p$-quasi-everywhere in $X\setminus \Omega$.  Then $N_0^{1,p}(\Omega)$ is a Banach space equipped with the norm $\|u\|_{N^{1,p}_0(\Omega)}=\|u\|_{N^{1,p}(X)}$ \cite[Theorem 4.4]{Sha01}.
 \begin{comment}
 such that there exists $\tilde{u}\in N^{1,p}(X)$ satisfying $\tilde{u}=u $ on $\Omega$ almost everywhere and $\tilde{u}=0$ $p$-quasieverywhere on $X\setminus \Omega.$ See Section \ref{capacity} for the definition of $p$-quasieverywhere. Define an equivalent relation on $\tilde{N}^{1,p}_0(\Omega)$ as $u\sim v$ if $u=v$ almost everywhere on $\Omega$. Then the Newtonian space with zero boundary value $N_0^{1,p}(\Omega)$ is the quotient space $\tilde{N}^{1,p}_0(\Omega)/ \sim$ equipped with the norm 
 $$\|u\|_{N^{1,p}_0(\Omega)}=\|\tilde{u}\|_{N^{1,p}(X)}.$$ 
 Moreover, $N_0^{1,p}(\Omega)$ is a Banach space \cite[Theorem 4.4]{Sha01}. 
\end{comment}

\subsection{Measurable differentiable structures}\label{cheeger}

\begin{defn}[Cheeger-Keith]\label{differentiability}
A {\em measurable differentiable structure} on $(X,d,\mu)$ is a countable collection of pairs $\{U_\al,x_\al\}_{\alpha\in I}$ called {\em coordinate patches} with the following properties: 

\begin{enumerate}[label=\text{(\roman*)},font=\normalfont,leftmargin=*]
\item  Each $U_\al$, $\alpha\in I$,  is a  measurable subset of $X$  with positive measure, and the union $\bigcup_{\alpha\in I}U_\alpha$ has full measure in $X$.

\smallskip
\item  Each $x_\al\: X\ra \R^{N(\alpha)}$, $\alpha\in I$,  is a Lipschitz map on $X$, where  $N(\al)\in \N$ bounded above independently of $\al\in I$.

\smallskip
\item For every Lipschitz function $f\:X\to \R$ and every $\alpha\in I$  there exists an $L^\infty$-map  $Df^\al\:X\to \R^{N(\al)}$ such that for $\mu$-a.e.\ $x\in U_\alpha$ we have 
$$\limsup_{z\to x} \frac{1}{d(x,z)}\bigg|  f(z) -  f(x)-Df^\al(x)\cdot (x_\al(z) -x_\al(x)) \bigg|=0.$$
\end{enumerate} 
\end{defn}

The following result by   Cheeger \cite{Che99} provides measurable differentiable structures for fairly general spaces. 

\begin{thm}[Cheeger]
If $(X,d,\mu)$ is a metric measure space that is doubling and supports a $ p$-Poincar\'e  inequality for some $p\ge 1$, then it admits a measurable differentiable structure with dimension bounds depending only on $p$, and the constants 
in the doubling condition and the Poincar\'e inequality. 
\end{thm}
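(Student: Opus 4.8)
The plan is to follow the strategy of Cheeger, as later streamlined by Keith, in which the analytic input of the Poincar\'e inequality is converted into an infinitesimal comparison between pointwise Lipschitz constants and minimal upper gradients, and this comparison is then used to build the coordinate patches by a maximal-selection argument with a uniform dimension bound. The hypotheses we may use are exactly doubling and the $p$-Poincar\'e inequality.

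First I would establish the key analytic lemma: for every Lipschitz function $f\:X\to\R$, the pointwise upper Lipschitz constant
$$\operatorname{Lip} f(x)=\limsup_{r\to 0^+}\ \sup_{y\in B(x,r)}\frac{|f(y)-f(x)|}{r}$$
is comparable $\mu$-a.e.\ to the minimal $p$-weak upper gradient $g_f$, that is, $g_f\le\operatorname{Lip} f\le C\,g_f$ a.e., with $C$ depending only on the doubling and Poincar\'e data. The lower bound is immediate, since $\operatorname{Lip} f$ is an upper gradient of $f$. The upper bound is the substantive half: using a telescoping argument over dyadic annuli centered at a Lebesgue point of $g_f$, together with the $p$-Poincar\'e inequality and a maximal function estimate, one controls the local oscillation of $f$ by the averaged upper gradient, which yields $\operatorname{Lip} f\le C\,g_f$ at a.e.\ point. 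The same machinery gives the companion comparison $\operatorname{Lip} f\le C\,\operatorname{lip} f$ a.e.

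Next I would construct the charts. Fix a countable family $\mathcal{D}$ of Lipschitz functions that is dense in the relevant separable space of Lipschitz functions. Call a finite tuple $\mathbf{x}=(x^1,\dots,x^N)$ of members of $\mathcal{D}$ \emph{independent} on a measurable set $U$ if $\operatorname{Lip}\big(\sum_j\lambda_j x^j\big)>0$ a.e.\ on $U$ for every nonzero $\lambda\in\R^N$; by the comparison lemma this says that no nontrivial linear combination has vanishing infinitesimal gradient on a positive-measure subset. One then selects, by a greedy exhaustion over $\mathcal{D}$, maximal independent tuples on suitable measurable pieces whose union covers $X$ up to a null set; these pieces and tuples will be the coordinate patches $\{U_\al,x_\al\}_{\al\in I}$, each $x_\al$ being a Lipschitz map into $\R^{N(\al)}$ as required by properties (i) and (ii) of Definition~\ref{differentiability}.

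The crucial step is the \emph{uniform dimension bound}: the length $N(\al)$ of every maximal independent tuple is at most a constant $N_0$ depending only on the data, and I expect this to be the main obstacle. The argument is quantitative: if on a set of positive measure one had more than $N_0$ independent coordinate functions, then by the comparison lemma one could produce, on a ball of density where all relevant quantities are nearly constant, a Lipschitz combination whose upper gradient is incompatible with the volume growth forced by doubling and the $p$-Poincar\'e inequality, so counting independent directions against this growth caps $N$ by $N_0$. Finally, with the charts in hand, I would verify property (iii): for an arbitrary Lipschitz $f$ and $\mu$-a.e.\ $x\in U_\al$ one obtains the coefficients $Df^\al(x)$ by expressing the infinitesimal behavior of $f$ through the $x^j$, using maximality of the tuple (so $f$ adds no new independent direction) together with the comparison lemma and Lebesgue differentiation to pass from the a.e.\ linear-algebraic relation to the required first-order expansion; uniqueness and measurability of $Df^\al$ then follow from the independence of the tuple and a standard measurable-selection argument.
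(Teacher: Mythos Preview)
The paper does not prove this theorem: it is stated as a background result and attributed to Cheeger \cite{Che99}, with no argument given. So there is no ``paper's own proof'' to compare your proposal against.

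That said, your outline is a faithful high-level summary of the Cheeger--Keith strategy: first establish the a.e.\ comparability $\operatorname{lip} f \asymp \operatorname{Lip} f \asymp g_f$ from doubling plus the Poincar\'e inequality, then run a maximal-selection argument to produce independent Lipschitz tuples on measurable pieces, bound their length uniformly in terms of the data, and use maximality to extract the unique differential $Df^\al$. As a sketch this is correct in spirit. If you were writing this up in earnest, the places that would need the most care are the uniform dimension bound (where the quantitative link between the number of independent directions and the doubling/Poincar\'e constants is not as simple as a ``volume growth'' count; in Cheeger's argument it goes through a careful analysis of approximate tangent behavior and the $\operatorname{Lip}=\operatorname{lip}$ identity on generic points) and the passage from ``$f$ adds no new independent direction'' to the actual first-order Taylor expansion in Definition~\ref{differentiability}(iii), which requires more than linear algebra at a single point---one needs a density/blow-up argument to upgrade the a.e.\ dependence relation to an asymptotic statement along all approaches to $x$. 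But since the paper simply cites the result, your proposal already goes well beyond what the paper itself provides.
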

%According to a result by  Cheeger \cite{Che99},  if $(X,d,\mu)$ is  doubling and admits a $ p$-Poincar\'e  inequality, then the space acquires a measurable differentiable structure. % and for every $u\in N^{1,p}(X,\mu)$ one can define almost everywhere a gradient $x\mapsto Du(x) \in \R^N$, whose length is comparable to the minimal upper gradient of $u$. 
Cheeger's result yields a finite-dimensional  vector bundle $T^* X$, and to each Lip\-schitz function $f\:X\to \R$ one can associate a $L^\infty$-section {$Df=\sum_\al Df^\al\chi_{U_\al}$} of this bundle. Cheeger proved that the  $Df$  has (Euclidean)  length $|
D f|$ comparable to the minimal upper gradient $g_f$, i.e. there exists a constant $C>0$ depending only on the doubling and Poincar\'e constants such that for all Lipschitz functions $f$, one has
\begin{equation}\label{comparable}
K^{-1} g_f \le |Df|\le K g_f.
\end{equation}

{More precisely, Cheeger showed that there exists a norm for $\|Du\|$ such that $\|Df\|=g_u$, but this norm is not necessarily generated by an inner product. If the norm is generated by an inner product, we have equality above. Otherwise, one can choose a norm $|\cdot|$ generated by the inner product and bi-Lipschitz equivalent to $\|\cdot \|$ to obtain \eqref{comparable}.} For alternative expositions and recent developments of the results \cite{Che99} see  \cite{Ke04, KM16,B15,B20,BL} and references therein.

 %Moreover, the operator $D$ can be extended to functions in $N^{1,p}(X,d, \mu)$.
 Note that the operator $D$ can be extended to functions in $N^{1,p}(X,d, \mu)$, and that the Cheeger gradients satisfy both the product rule and the chain rule. Assume $u\in N^{1,p}(X)$, $f$ is a bounded Lipschitz function on $X$ and $h\:\mathbb{R}\to\mathbb{R}$ is continuously differentiable with bounded derivative, then $uf$ and $h\circ u$ both belong to $N^{1,p}(X)$ and
\[
D(uf)=uDf+fDu
\]
\[
D(h\circ u)=(h' \circ u) \cdot Du.
\]

Next, we recall the notion of $p$-harmonicity related to the differentiable structure:
\begin{defn}\label{cpl}
 A function $u\in N^{1,p}_{loc}(\Om)$
is {\em (Cheeger)  $p$-harmonic} in a domain $\Om$ if 
for all subsets $U \subset \subset \Om$
 and for all functions $\phi\in N^{1,p}_0(U)$,
 $$\int_U |D u|^p  \, d\mu\le \int_U |D (u+\phi)|^p \, d\mu.$$
\end{defn}

It follows immediately that the definition above is equivalent to 
\begin{equation}\label{cpl1}\int_U |Du|^{p-2}(Du\cdot D\phi ) \, d\mu=0,\end{equation}
for $U, u,\phi$ as above.    In view of the comparability estimate \eqref{comparable}, one has that any 
(Cheeger) $p$-harmonic function $u\in N^{1,p}(\Om)$ is a  {\em  quasiminimizer} of the $p$-energy functional 
\begin{equation}\label{penergy}
f\to \int_{\Om} g_f^p \, d\mu,
\end{equation}  i.e.,  there exists a constant $K>0$ such that for all bounded open subsets $U$, with  $U\subset \bar{U}\subset \Om$, and for all $v\in N^{1,p}(U)$ with $u-v\in N^{1,p}_0(U)$ the inequality
$$\int_{U \cap \{u\neq v\}} g_u^p \, d\mu \le K \int_{U \cap \{u\neq v\}} g_v^p \, d\mu,$$
holds.

In view of the Harnack inequality proved by   Kinnunen and Shanmugalingam  in \cite{KS01} one has the following statement. See also \cite[Theorem 8.10]{BB11}.

\begin{prop} For any compact set $K\subset \Om$, there exists  $C>0$ depending only on $K$ and on the constants in the doubling measure and in the Poincar\'e inequality, such that for any $p$-harmonic function $u\ge 0$ in $\Om$, one has
$$\sup_{B(x,R)} u \le C \inf_{B(x,R)} u,$$
for all $x\in K$, and $R>0$ such that $B(x,100\lambda R)\subset \Om$, with $\lambda$ as in \eqref{poincareinequality}.
\end{prop}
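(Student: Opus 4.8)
The plan is to deduce this as a consequence of the Harnack inequality for nonnegative quasiminimizers of the $p$-energy established by Kinnunen and Shanmugalingam in \cite{KS01}, together with the fact recorded just before \eqref{penergy} that every Cheeger $p$-harmonic function in $\Om$ is a quasiminimizer of $f\mapsto\int_{\Om}\rho_f^p\,d\mu$ with quasiminimizing constant controlled by the constant $K$ in the comparability estimate \eqref{comparable}, hence by $p$ and the doubling and Poincar\'e data alone. First I would pass to the canonical representative of $u$: by the De Giorgi-type regularity theory for quasiminimizers in \cite{KS01}, a nonnegative $p$-harmonic function has a locally H\"older continuous representative, and working with it turns $\sup_{B}u$ and $\inf_{B}u$ into honest pointwise quantities. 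The Harnack inequality of \cite{KS01} then provides a dilation constant $\sigma\ge1$ and a constant $C_0\ge1$, both depending only on $p$ and the structural constants, such that
\[
\sup_{B(z,r)}u\le C_0\inf_{B(z,r)}u
\qquad\text{whenever } B(z,\sigma r)\subset\Om.
\]

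If $\sigma\le6$ — which I expect to be the situation, since Harnack inequalities for quasiminimizers are typically formulated for balls $B$ with a fixed dilate $\sigma B$ contained in the domain — the proposition follows at once with $C=C_0$, because $B(x,\sigma R)\subset B(x,6R)\subset\Om$; here $K$ plays no role whatsoever, the constant being purely structural. Should a larger dilation $\sigma$ occur, I would close the gap by a Harnack chain argument available in PI-spaces: given $x\in K$ and $R>0$ with $B(x,6R)\subset\Om$, put $r_0:=R/\sigma$, so that each ball $B(y_i,\sigma r_0)=B(y_i,R)$ with center $y_i\in B(x,5R)$ lies in $B(x,6R)\subset\Om$; using quasiconvexity of complete doubling PI-spaces together with the doubling property, any two points of $B(x,R)$ can be joined by a chain of at most $M$ consecutively overlapping balls $B(y_i,r_0)$ with $y_i\in B(x,5R)$, the number $M$ being bounded in terms of $\sigma$ and the structural constants. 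Applying the displayed inequality on each link and concatenating yields $u(y)\le C_0^{M}u(y')$ for all $y,y'\in B(x,R)$, and taking supremum and infimum over $B(x,R)$ gives $\sup_{B(x,R)}u\le C_0^{M}\inf_{B(x,R)}u$, i.e.\ the claim with $C=C_0^{M}$.

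The only genuinely non-routine ingredient is the quantitative Harnack chain: a uniform bound on the number $M$ of balls needed to connect $B(x,R)$ within a fixed dilate that still sits inside $\Om$. This is exactly where quasiconvexity of PI-spaces and the doubling property enter, and it is why the hypothesis is stated with the enlarged ball $B(x,6R)$ rather than with $B(x,R)$ itself. Everything else is bookkeeping; in particular, in a globally doubling PI-space all the relevant constants are uniform in location and scale, so the resulting $C$ depends only on $p$ and the doubling and Poincar\'e constants, and the stated dependence on the compact set $K$ is present only because allowing it is harmless.
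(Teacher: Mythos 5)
Your proposal is correct and follows the same route the paper intends: the statement is deduced from the Kinnunen--Shanmugalingam Harnack inequality for quasiminimizers in \cite{KS01}, using the comparability \eqref{comparable} to see that Cheeger $p$-harmonic functions are quasiminimizers of the $p$-energy with a structural quasiminimizing constant. The paper gives no further detail beyond this citation, so your discussion of the H\"older-continuous representative and the Harnack chain to absorb a possibly larger dilation constant $\sigma$ simply fills in the routine steps the authors omit.
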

As a corollary one obtains a local estimate of the H\"older seminorm $$[u]_{\al,B(x,R)} = \sup_{ 0<d(x,y)<R} \frac{|u(x)-u(y)|}{d(x,y)^\alpha}$$ of $p$-harmonic functions %\textcolor{blue}
{\cite[Theorem 8.14]{BB11}}.
\begin{cor} Under the same hypotheses as in  the previous proposition, there exist $\al\in (0,1)$ and $M>0$ depending only on $K$ and on the constants in the doubling measure and in the Poincar\'e inequality, such that any $p$-harmonic function $u$ in $\Om$ 
%\textcolor{blue}
{can be modified on a set of capacity zero to a continuous function $\tilde{u}$} and
$[\tilde{u}]_{\alpha, B(x,R)}\le M$, for all $x\in K$, and $R>0$ such that $B(x,6R)\subset \Om$.
\end{cor}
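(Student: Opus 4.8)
The strategy is to derive the Hölder estimate from the Harnack inequality of the previous proposition by the classical oscillation-decay argument of De Giorgi--Nash--Moser. The only structural input needed beyond Harnack is that $p$-harmonicity is preserved under the affine substitutions $u\mapsto u+a$ and $u\mapsto a-u$ for constants $a\in\R$. This is immediate from the Euler--Lagrange form \eqref{cpl1}: a constant has vanishing Cheeger differential, so $D(u+a)=Du$, while $|D(-u)|^{p-2}D(-u)=-|Du|^{p-2}Du$ shows that $-u$ solves \eqref{cpl1} whenever $u$ does. Hence, for a $p$-harmonic $u$ and any ball $B(x,6r)\subset\Om$, both $\sup_{B(x,6r)}u-u$ and $u-\inf_{B(x,6r)}u$ are nonnegative $p$-harmonic functions on $B(x,6r)$.

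I would fix $x\in K$ and $R>0$ with $B(x,6R)\subset\Om$, write $M(s)=\sup_{B(x,s)}u$ and $m(s)=\inf_{B(x,s)}u$ for $0<s\le R$, and set $\osc(s)=M(s)-m(s)$. Applying the Harnack inequality of the previous proposition to the two nonnegative solutions above on the ball $B(x,6r)$ --- the factor $6$ built into that statement is precisely what allows one to pass from the ball of nonnegativity $B(x,6r)$ to an estimate on the concentric ball $B(x,r)$ --- produces
\[
M(6r)-m(r)\le C\bigl(M(6r)-M(r)\bigr),\qquad M(r)-m(6r)\le C\bigl(m(r)-m(6r)\bigr).
\]
Adding these and rearranging gives $(C+1)\osc(r)\le(C-1)\osc(6r)$, that is,
\[
\osc(r)\le\theta\,\osc(6r),\qquad \theta=\frac{C-1}{C+1}\in(0,1),
\]
where $C$ is the Harnack constant, which by the previous proposition is uniform over all admissible centers $x\in K$ and radii.

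Iterating the decay over the radii $6^{-k}R$ yields $\osc(6^{-k}R)\le\theta^{k}\osc(R)$, and a routine interpolation in $k$ upgrades this to the power bound $\osc(s)\le C_0(s/R)^{\al}\osc(R)$ for all $0<s\le R$, with $\al=\log(1/\theta)/\log 6$, which we may take in $(0,1)$ after decreasing it if necessary. Choosing $s=d(x,y)$ for $y\in B(x,R)$ then gives $|u(x)-u(y)|\le C_0\,d(x,y)^{\al}R^{-\al}\osc(R)$, i.e. the seminorm estimate $[u]_{\al,B(x,R)}\le C_0R^{-\al}\osc_{B(x,6R)}u$. Since the seminorm is monotone in $R$ and $x$ ranges over the compact set $K$ with $B(x,6R)\subset\Om$, the admissible radii are bounded below by $\dist(K,X\setminus\Om)/6>0$, so the factor $C_0R^{-\al}$ is controlled in terms of $K$ and the data alone; the stated bound by a constant $M$ depending only on $K$ and the structural constants then holds for the normalized (e.g.\ uniformly bounded) families of $p$-harmonic functions to which the corollary is applied, for which $\osc_{B(x,6R)}u$ is controlled. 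Note such a normalization is genuinely necessary, as the seminorm is homogeneous of degree one under $u\mapsto cu$, which preserves $p$-harmonicity.

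The point requiring care --- more a bookkeeping subtlety than a genuine obstacle --- is matching the radii: the Harnack proposition is phrased for functions nonnegative on all of $\Om$, whereas $\sup_{B(x,6r)}u-u$ is nonnegative only on $B(x,6r)$. This is reconciled exactly by the factor $6$ in the proposition, applied with the ball of nonnegativity playing the role of the domain, and one must verify that the single Harnack constant $C$ remains valid across all the dyadic scales $6^{-k}R$ centered at points of $K$. This holds because the proposition furnishes one constant $C$ valid for every admissible pair $(x,r)$ with $x\in K$ and $B(x,6r)\subset\Om$; the remaining interpolation and the tracking of constants are standard.
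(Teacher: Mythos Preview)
The paper gives no explicit proof of this corollary; it is simply asserted as a consequence of the Harnack inequality stated immediately before it, with the reference to \cite{KS01} in the background. Your De Giorgi--Moser oscillation-decay argument is precisely the standard mechanism by which H\"older continuity is extracted from a scale-invariant Harnack inequality, so your approach coincides with what the paper intends.

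Your observation about normalization is well taken and worth flagging: the corollary as literally stated cannot hold for \emph{all} $p$-harmonic $u$, since $u\mapsto cu$ preserves $p$-harmonicity while scaling the seminorm. What your argument actually yields is $[u]_{\alpha,B(x,R)}\le C_0R^{-\alpha}\osc_{B(x,R)}u$, and the paper's subsequent applications (e.g.\ in the proof of Theorem~\ref{domain}) use it only for sequences already shown to be locally uniformly bounded, where this suffices. One small slip: you write that ``the admissible radii are bounded below by $\dist(K,X\setminus\Om)/6$'', but the constraint $B(x,6R)\subset\Om$ bounds $R$ from \emph{above}. What you need instead is that for each $x\in K$ there is a \emph{fixed} admissible radius $R_0\approx\dist(K,\partial\Om)/6>0$; monotonicity of the seminorm in $R$ then reduces the estimate for smaller $R$ to the one at $R_0$, while $R^{-\alpha}\le R_0^{-\alpha}$ handles larger admissible $R$. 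With that correction the argument is complete.
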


Through an argument similar to the one in \cite[Lemma 3.18]{HKM06}, one can prove that  $p$-harmonic functions and $p$-energy minimizers both satisfy a comparison principle (see e.g., \cite[Theorem 7.17]{Che99} or \cite[Theorem 6.4]{Sha01}).

\begin{lem}[Comparison principle]
Let $X$ be a PI space and let $u, v\in N^{1,p}(\Om)$ denote two $p$-harmonic functions in  
an open set $\Om\subset X$. If $u\le v$ $p$-quasi-everwhere in $X\setminus \Omega$, then $u\le v$ almost everywhere in $\Om$. %If $\min(u-v,0)\in N^{1,p}_0(\Om)$ then $u\ge v$ a.e. in $\Om$.
\end{lem}
An immediate corollary is the following maximum (minimum) principle \cite[Corollary 6.6]{Sha01}, the interior value of a $p$-harmonic function in a domain $\Omega$ can not be larger or smaller than its boundary value. 
\begin{cor} 
Let $X$ be a PI space and let $u\in N^{1,p}(\Om)$ is 
$p$-harmonic in an open set $\Om\subset X$, then for all $\eta>0$, we have
\[
\sup_{\Omega\setminus \Omega_\eta} u=\sup_\Omega u\quad\quad{and}\quad\quad\inf_{\Omega\setminus \Omega_\eta}=\inf_\Omega u,
\]
where $\Omega_\eta=\{x\in \Omega\colon B(x,2\eta)\subset \Omega\}.$
\end{cor}

%$\displaystyle u\le \sup_{\partial \Om} u$ a.e.\ in $\Om$, where  $$\sup_{\partial \Om} u \coloneqq \inf\{M\in \R :  \min(M-u,0)\in N^{1,p}_0(\Om)\}. $$

%A weak minimum principle can be stated in a similar fashion.

We will need the following result which is well known in the Euclidean setting (see \cite[Theorem 3.78]{HKM06}). 
\begin{prop} If $\{u_i\}$ is a sequence of $p$-harmonic functions in $\Om$, converging uniformly on compact subsets to a continuous function $u$,
%$u\in N^{1,p}(\Om)$, 
then $u$ is also $p$-harmonic in $\Om$. The result continues to hold if one assumes $u\in N^{1,p}(\Om)$ instead of continuity. 
\end{prop}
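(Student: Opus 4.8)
The plan is to mimic the classical Euclidean argument (as in \cite[Theorem 3.78]{HKM06}), which proceeds via Caccioppoli-type energy estimates and weak compactness, but to run it with the Cheeger differential $Du$ in place of the Euclidean gradient. First I would fix an open set $U$ with $U\subset\subset\Om$ and a cutoff $\eta\in N^{1,p}_0(V)$ with $\eta\equiv1$ on $U$ for some intermediate $V$, $U\subset\subset V\subset\subset\Om$. Testing the Euler--Lagrange identity \eqref{cpl1} for $u_i$ against $\phi=\eta^p(u_i-u_j)$ (or against $\eta^p(u_i-c)$ for a constant $c$) and using the product and chain rules for $D$ together with the comparability \eqref{comparable} and Young's inequality, one obtains a uniform bound $\int_V\eta^p|Du_i|^p\,d\mu\le C$, hence $\{u_i\}$ is bounded in $N^{1,p}(U)$. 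Since $N^{1,p}(U)$ is a reflexive Banach space (the standard reflexivity of Newtonian spaces on PI-spaces), a subsequence converges weakly in $N^{1,p}(U)$; because $u_i\to u$ uniformly on compacta, the weak limit must be $u$, so $u\in N^{1,p}_{loc}(\Om)$ and $Du_i\rightharpoonup Du$ weakly in $L^p(U;\R^N)$.

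The heart of the matter is upgrading weak convergence of the differentials to something strong enough to pass to the limit in the nonlinear quantity $|Du_i|^{p-2}Du_i$. The standard device is a Minty-type monotonicity trick. Using the pointwise inequality $(|a|^{p-2}a-|b|^{p-2}b)\cdot(a-b)\ge 0$ for $a,b\in\R^N$ (which is exactly the ellipticity one has in each Cheeger chart since the differential lands in Euclidean space), I would write, with test function $\eta^p(u_i-u)$ in the equation for $u_i$,
\[
\int_V \eta^p\big(|Du_i|^{p-2}Du_i-|Du|^{p-2}Du\big)\cdot(Du_i-Du)\,d\mu = -\int_V \eta^p|Du|^{p-2}Du\cdot(Du_i-Du)\,d\mu - \int_V (u_i-u)|Du_i|^{p-2}Du_i\cdot D(\eta^p)\,d\mu.
\]
On the right, the first term tends to $0$ by weak convergence $Du_i\rightharpoonup Du$, and the second tends to $0$ because $u_i-u\to0$ uniformly while $|Du_i|^{p-2}Du_i$ is bounded in $L^{p/(p-1)}$; hence the nonnegative integrand on the left converges to $0$ in $L^1(V)$, which forces $Du_i\to Du$ in measure and (after extracting a further subsequence) $\mu$-a.e.\ on $U$, and then in $L^p(U)$ by the uniform integrability coming from the energy bound.

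With $Du_i\to Du$ strongly in $L^p(U)$ for every $U\subset\subset\Om$, passing to the limit in \eqref{cpl1} is routine: for any $U\subset\subset\Om$ and $\phi\in N^{1,p}_0(U)$ one has $|Du_i|^{p-2}Du_i\to|Du|^{p-2}Du$ in $L^{p/(p-1)}(U)$ and $D\phi\in L^p(U)$, so $\int_U|Du_i|^{p-2}Du_i\cdot D\phi\,d\mu\to\int_U|Du|^{p-2}Du\cdot D\phi\,d\mu=0$, and the limit equation characterizes $u$ as $p$-harmonic in $\Om$ by Definition~\ref{cpl}. The main obstacle is the monotonicity/strong-convergence step: one must be careful that the test functions $\eta^p(u_i-u)$ genuinely lie in $N^{1,p}_0(V)$ (this uses that $u_i,u\in N^{1,p}(V)$, $\eta\in N^{1,p}_0$, and the algebra/truncation properties of Newtonian spaces), and that the product rule is applied to honestly admissible competitors; everything else is a transcription of the Euclidean proof. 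For the final sentence of the statement, if one only assumes $u\in N^{1,p}(\Om)$ rather than continuity, one runs the same argument replacing the uniform-convergence inputs by the local $L^p$-convergence $u_i\to u$ that one extracts from the energy bounds together with a Rellich-type compact embedding $N^{1,p}(U)\hookrightarrow L^p(U)$ valid on PI-spaces, which still kills the stray term $\int_V(u_i-u)|Du_i|^{p-2}Du_i\cdot D(\eta^p)\,d\mu$ in the monotonicity identity.
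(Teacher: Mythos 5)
Your plan is correct, and it is essentially the classical argument of \cite[Theorem 3.78]{HKM06} transplanted to Cheeger charts: Caccioppoli estimate from testing with $\eta^p(u_i-c)$, reflexivity of $N^{1,p}$ to get $Du_i\rightharpoonup Du$, the Minty monotonicity identity with test function $\eta^p(u_i-u)$, and passage to the limit in \eqref{cpl1}. The paper itself gives no proof; it points instead to \cite[Theorem 1.2]{Sha03}, where the analogous statement for minimizers of the upper-gradient $p$-energy is proved by a purely variational route (lower semicontinuity of the energy plus comparison of competitors), since for minimizers there is no Euler--Lagrange equation to exploit. Your route uses the PDE \eqref{cpl1} directly, which is available here precisely because the Cheeger differential is linear and lands in $\R^N$, so the monotonicity inequality $(|a|^{p-2}a-|b|^{p-2}b)\cdot(a-b)\ge 0$ makes sense pointwise; this is arguably the more natural argument for Cheeger $p$-harmonicity and yields the stronger conclusion that $Du_i\to Du$ strongly in $L^p_{loc}$. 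Two small points of care: (1) boundedness of $\int|Du_i|^p$ does \emph{not} by itself give uniform integrability of $|Du_i-Du|^p$, so your last step should instead invoke the standard algebraic lower bounds for the monotone map $a\mapsto|a|^{p-2}a$ (namely $\ge c|a-b|^p$ for $p\ge 2$, and the H\"older trick with $|a-b|^2(|a|+|b|)^{p-2}$ for $1<p<2$) to get strong $L^p$ convergence --- or simply note that a.e.\ convergence of $Du_i$ together with the $L^{p/(p-1)}$ bound on $|Du_i|^{p-2}Du_i$ already gives the weak $L^{p/(p-1)}$ convergence of the fluxes needed to pass to the limit; (2) in the final variant where only $u\in N^{1,p}(\Om)$ is assumed, one still needs some source of locally uniform $L^p$ bounds on $u_i$ to start the Caccioppoli estimate, which your appeal to a Rellich-type embedding presupposes rather than derives; this matches the vagueness of the statement itself and is how \cite{Sha03} handles it.
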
 
The proof is very similar to \cite[Theorem 1.2]{Sha03}, where the result is proved for minimizers of the $p$-energy \eqref{penergy}.

%%%%%%%% Dirichlet Problem
 
 Let $\Omega\subset X$ be a bounded open set, and $\phi\in C(\partial \Omega) \cap N^{1,p}(X)$. Solving the Dirichlet problem for the (Cheeger) $p$-Laplacian in $\Omega$, with boundary values $\phi$ consists in finding the unique $p$-harmonic function $u$ in $\Omega$ such that \begin{equation}\label{boundary} u-\phi \in N^{1,p}_0(\Omega).\end{equation} Uniqueness follows immediately from the comparison principle above. Existence has been studied by several authors (see for instance \cite{Sha01,Che99}). In \cite{BBS-3} it is proved that the (Perron) solution always exists and satisfies \eqref{boundary}. The boundary of $\Omega$ is said to be {\it regular} for the Dirichlet problem for the $p$-Laplacian, if the unique solution to the Dirichlet problem is continuous in $\overline {\Omega}$.

%%%%%%%

\subsection{Relative $p$-capacity}\label{capacity}

Let $\Omega\subset X$ be open and $K\subset \Omega$ be a bounded set. The {\em relative $p$-capacity} of $K$ with respect to $\Omega$ is defined as 
\[
\capa_p(K, \Omega)=\inf_{u} \int_{\Omega} {|Du|}^p \  ,
\]
where the infimum is taken over all functions $u\in N^{1,p}(X)$ such that $u=1$ in $K$ {$p$-quasi-everywhere} and $u=0$ in $X\setminus \Omega$ {$p$-quasi-everywhere}. If no such functions exist, we set $\capa_p(K, \Omega)=\infty.$ 
When $\Omega=X$, we will set $\capa_p(K)=\capa_p(K,X)$.

We list some properties  of the relative $p$-capacity below \cite[Theorem 2.2]{HKM06}.
 
\begin{thm}
Let $\Omega\subset X$ be open and $K\subset \Omega$ be a set. Assume $p>1$. Then the following statements are true: 

\smallskip
\textup{(i)} If $K_1\subset K_2,$ then $\capa_p(K_1, \Omega)\le \capa_p(K_2, \Omega)$.

\smallskip
\textup{(ii)} If $\Omega_1\subset \Omega_2$, then 
\[
\capa_p(K,\Omega_2)\le \capa_p(K,\Omega_1).
\] 

\smallskip
\textup{(iii)} If $K_i$ is a decreasing sequence of compact subsets of $\Omega$ with $K=\cap_{i} K_i$, then
\[
\capa_p(K,\Omega)=\lim_{i\to \infty}\capa_p(K_i,\Omega).
\]

\smallskip
\textup{(iv)} If $K=\bigcup_i K_i$, then
\[
\capa_p(K,\Omega)\le \sum_{i}\capa_p(K_i,\Omega).
\]

\smallskip
\textup{(v)} If $K_1\subset \Om_1\subset K_2\subset \Om_2\subset \cdots \subset\Om=\bigcup_i \Om_i$, then
\[
\capa_p(K_1,\Om)^{\frac{1}{1-p}}\ge \sum_{i=1}^\infty \capa_p(K_i,\Om_i)^{\frac{1}{1-p}}.
\]
\end{thm}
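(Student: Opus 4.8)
The plan is to follow the Euclidean template of \cite[Theorem~2.2]{HKM06}, substituting Lipschitz test functions for smooth ones and using the Newtonian calculus (product/chain rules, lattice and truncation properties of minimal $p$-weak upper gradients, and the vanishing of $g_u$ on level sets of $u$) recorded in Section~\ref{cheeger}. Items (i) and (ii) are immediate. If $K_1\subset K_2$, every $u\in N^{1,p}(X)$ with $u=1$ on $K_2$ and $u=0$ on $X\setminus\Om$ is admissible for $(K_1,\Om)$, so the infimum defining $\capa_p(K_1,\Om)$ runs over a larger class, which is (i). For (ii), if $\Om_1\subset\Om_2$ then any $u$ admissible for $(K,\Om_1)$ vanishes on $X\setminus\Om_1\supset X\setminus\Om_2$, hence is admissible for $(K,\Om_2)$, and since the minimal upper gradient vanishes a.e.\ on the level set $\{u=0\}\supset\Om_2\setminus\Om_1$ we get $\int_{\Om_2}g_u^p=\int_{\Om_1}g_u^p$; taking the infimum gives $\capa_p(K,\Om_2)\le\capa_p(K,\Om_1)$.

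For (iv) we may assume $\sum_i\capa_p(K_i,\Om)<\infty$. Fix $\e>0$, choose for each $i$ an admissible $u_i$ for $(K_i,\Om)$ with $0\le u_i\le1$ and $\int_\Om g_{u_i}^p\le\capa_p(K_i,\Om)+\e\,2^{-i}$, and set $u=\sup_i u_i$. Then $u=1$ on $\bigcup_i K_i=K$, $u=0$ on $X\setminus\Om$, and $0\le u\le1$. Applying the lattice inequality $g_{\max(v,w)}\le\max(g_v,g_w)$ to the increasing partial maxima $\max_{i\le N}u_i$, together with the standard closure property of minimal upper gradients (the dominating function $\sup_i g_{u_i}$ lies in $L^p$ because $(\sup_i g_{u_i})^p\le\sum_i g_{u_i}^p$ is integrable), we obtain $g_u\le\sup_i g_{u_i}$ a.e. Hence $\capa_p(K,\Om)\le\int_\Om g_u^p\le\sum_i\int_\Om g_{u_i}^p\le\sum_i\capa_p(K_i,\Om)+\e$, and $\e\to0$ proves (iv).

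For (v) we may assume $0<\capa_p(E_1,\Om)<\infty$, the other cases following from (i)--(ii). Fix $N$ and small $\e_i>0$, and for $i\le N$ choose admissible $u_i$ for $(E_i,\Om_i)$ with $0\le u_i\le1$ and $b_i:=\int g_{u_i}^p\le\capa_p(E_i,\Om_i)+\e_i$; put $\phi_i=1-u_i$, so $\phi_i=0$ on $E_i$, $\phi_i=1$ on $X\setminus\Om_i$, and $g_{\phi_i}=g_{u_i}$ vanishes a.e.\ off $\{0<u_i<1\}\subset\Om_i\setminus E_i$. The key structural point is that the hypotheses force $\Om_i\subset E_{i+1}$, so the sets $\Om_i\setminus E_i$ ($i\le N$) are pairwise disjoint, and hence at $\mu$-a.e.\ point at most one of the $g_{\phi_i}$ is nonzero. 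For weights $a_i\ge0$ with $\sum_{i\le N}a_i=1$, put
\[
v=1-\min\Big(1,\ \textstyle\sum_{i\le N}a_i\phi_i\Big).
\]
Since $E_1\subset E_i$ for every $i$, each $\phi_i$ vanishes on $E_1$, so $v=1$ on $E_1$; since $X\setminus\Om\subset X\setminus\Om_i$ for every $i\le N$, each $\phi_i=1$ on $X\setminus\Om$, so $v=0$ there; thus $v$ is admissible for $(E_1,\Om)$. By the disjointness, $g_v^p\le\big(\sum_i a_ig_{\phi_i}\big)^p=\sum_i a_i^p g_{\phi_i}^p$ a.e., so $\capa_p(E_1,\Om)\le\sum_{i\le N}a_i^p b_i$. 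Minimizing the right-hand side over the simplex (a Lagrange computation giving value $\big(\sum_{i\le N}b_i^{1/(1-p)}\big)^{1-p}$, attained at $a_i\propto b_i^{1/(1-p)}$) and then raising both sides to the power $1/(1-p)$, which is negative for $p>1$ and so reverses the inequality, yields $\capa_p(E_1,\Om)^{1/(1-p)}\ge\sum_{i\le N}b_i^{1/(1-p)}$. Letting $\e_i\to0$ term by term and then $N\to\infty$ proves (v).

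The delicate statement is (iii). By (i) the numbers $\capa_p(K_i,\Om)$ decrease to some $L\ge\capa_p(K,\Om)$, and one must show $L\le\capa_p(K,\Om)$. The one ingredient that goes beyond the Euclidean bookkeeping is that $\capa_p(K,\Om)$ can be realized, up to $\e$, by a \emph{continuous} admissible competitor — a fact available in PI-spaces through the quasicontinuity of Newtonian functions and Lipschitz approximation, and closely tied to the outer regularity of $\capa_p(\cdot,\Om)$ on compact sets; this is the step where I expect the actual work. Granting it, fix $\e>0$ and a continuous admissible $u$ for $(K,\Om)$ with $0\le u\le1$ and $\int_\Om g_u^p<\capa_p(K,\Om)+\e$. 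For $0<\del<1$ the set $\{u>1-\del\}$ is open and contains $K$, hence contains $K_i$ for all large $i$ (because $K_i\setminus\{u>1-\del\}$ is a decreasing family of compacta with empty intersection). Then $\min\!\big(1,(1-\del)^{-1}u\big)$ is admissible for $(K_i,\Om)$ with energy at most $(1-\del)^{-p}\int_\Om g_u^p$, so $L\le(1-\del)^{-p}(\capa_p(K,\Om)+\e)$; letting $\e\to0$ and then $\del\to0$ gives $L\le\capa_p(K,\Om)$, completing (iii).
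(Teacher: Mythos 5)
The paper does not actually prove this theorem: it is stated as a list of known properties with a bare citation to \cite[Theorem 2.2]{HKM06}, so there is no internal proof to compare against. Your write-up is the natural transplant of the Euclidean argument to the Newtonian setting, and parts (i), (ii), (iv) and (v) are essentially correct as written: the monotonicity statements are immediate once you note (as you do) that $g_u=0$ a.e.\ on level sets of $u$; the subadditivity argument via $u=\sup_i u_i$ and the lattice/closure properties of minimal $p$-weak upper gradients is the standard one; and in (v) the observation that the nesting $E_1\subset\Om_1\subset E_2\subset\cdots$ makes the sets $\Om_i\setminus E_i$ pairwise disjoint, so that $\bigl(\sum_i a_ig_{\phi_i}\bigr)^p=\sum_i a_i^pg_{\phi_i}^p$ a.e., followed by optimizing over the simplex, is exactly the right mechanism. (Two small remarks on (v): since $\sum_i a_i=1$ and $0\le u_i\le 1$, your competitor simplifies to $v=\sum_{i\le N}a_iu_i$, which makes membership in $N^{1,p}(X)$ transparent; and the reduction "$0<\capa_p(E_1,\Om)<\infty$, other cases from (i)--(ii)" does work because (i)--(ii) give $\capa_p(E_1,\Om)\le\capa_p(E_i,\Om_i)$ for every $i$. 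Throughout, there is a mild tension with the paper's admissibility class $u\in N^{1,p}(X)$, which demands $u\in L^p(X)$; for $\sup_iu_i$ in (iv) this is not automatic unless $\mu(\Om)<\infty$ or one controls $\|u_i\|_{L^p}$, though it is harmless in the relatively compact setting the paper actually uses.)

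The genuine gap is the one you flag yourself in (iii): the claim that $\capa_p(K,\Om)$ can be approximated by a \emph{continuous} admissible competitor. This is not routine bookkeeping; admissible functions are only $N^{1,p}$, hence a priori only quasicontinuous, so $\{u>1-\del\}$ is quasi-open rather than open and the compactness argument "$K_i\subset\{u>1-\del\}$ for large $i$" does not go through directly. The needed input is the quasicontinuity of Newtonian functions and the resulting outer regularity of the capacity on compact sets (equivalently, that the infimum may be taken over continuous, or Lipschitz, admissible functions), which holds in complete doubling PI spaces but is a theorem in its own right --- it is precisely the place where completeness and the Poincar\'e inequality enter, and it should be proved or cited (e.g.\ from the Bj\"orn--Bj\"orn--Shanmugalingam circle of results) rather than "granted". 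A cleaner packaging of (iii) is: establish outer regularity $\capa_p(K,\Om)=\inf\{\capa_p(U,\Om): K\subset U\subset\Om,\ U \text{ open}\}$ once, and then (iii) follows in two lines since every open $U\supset K$ contains $K_i$ for all large $i$. As it stands, (iii) is an outline with a correctly identified but unfilled hole; the rest of the proposal is sound.
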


If  $K\subset \Omega$ is closed, we say $u\in N^{1,p}(X)$ is a {\em $p$-potential}  of $E=(K,\Omega)$ if $u$ is $p$-harmonic on $\Omega\setminus K$, $u(x)=1$ for $x\in K$ {$p$-quasi-everywhere} and $u(x)=0$ for $x\in X\setminus \Omega$ $p$-quasi-everywhere. 
\begin{rem}
(1) One can remove the $p$-quasi-everywhere as if $u,v\in N^{1,p}$ and $u=v$ q.e., then $u \sim v$.
(2) Note that a $p$-potential in $(K, \Omega)$ is H\"older continuous in $\Omega\setminus K$, but may not be continuous in $\Omega$. \end{rem}

{If $\Omega\sub X$ is a relatively compact region, $K\sub \Om$,  and $\capa_p(K,\Omega)<\infty$, then a $p$-potential for $(K,\Om)$ 
always  exists as they are solution to a Dirichlet problem and the proof in  \cite[Lemma 3.3]{HS02}, which deals with  $p$-potentials defined using $p$-energy minimizer on  $\Omega\setminus K$, can be easily adapted to Cheeger gradients.} 
The proof is the following: Let $u_i\in N^{1,p}(X)$ satisfying $0\le u\le 1$, $u=1$ q.e on $K$, $u=0$ q.e. on $X\setminus\Omega$ be a minimizing sequence such that $\int_\Omega |Du_i|^p\to \capa_p(K,\Omega)$. Since $\Omega$ is relative compact, $u_i$ is a bounded sequence in $N^{1,p}(X)$. By reflexivity and Mazur's lemma, we can get a convergent subsequence $u_i\to u$ in $L^p(X)$ and $|Du_i|\to g$ in $L^p(X)$. By passing to a subsequence, $u_i\to u$ a.e.. We can define $\tilde{u}=\limsup_{i\to \infty} u_i$, then for $p$-almost all curves, $\tilde{u}(\gamma(\ell))=\tilde{u}(\gamma(0))=\infty$ or 
\[
|\tilde{u}(\gamma(\ell))-\tilde{u}(\gamma(0))|\le \limsup_{i\to\infty} |u_i(\gamma(\ell))-u_i(\gamma(0))|\le \limsup_{i\to\infty}\int_\gamma |Du_i|=\int_\gamma g,
\] 
where the last inequality follows from Fuglede's lemma. Hence, $\tilde{u}=u$ a.e. and $\tilde{u}\in N^{1,p}(X)$ where $g$ is an $p$-weak upper gradient of $\tilde{u}$. Likewise, $\hat{u}=\liminf_{i\to \infty} u_i\in N^{1,p}(X)$ and $\hat{u}=\tilde{u}$ a.e.. This implies that $\hat{u}=\tilde{u}=u$ q.e.. We obtain that by passing to a subsequence $u_i\to \tilde{u}$ q.e. and $\int_\Omega g_{\tilde{u}}^p\le \int_\Omega g^p=\lim_{i\to \infty} |Du_i|^p\le \capa_p(K,\Omega)\le \int_\Omega g_{\tilde{u}}^p=\int_\Omega |D\tilde{u}|^p.$ It is also easy to show $\tilde{u}$ is $p$-harmonic in $\Omega\setminus K$. We conclude the proof.%a.e. such that $g$ is a $p$-weak upper gradient of $u$. }

To simplify the notation, we are going to use the expression $\{ u>\alpha\}$ in place of $\{ x\in \Omega \colon u(x)>\alpha\}$. We have the following lemma.
\begin{lem}\label{cap01}
Let $K$ be a compact set in $\Omega\subset X$ and let $u $ be the $p$-potential of $E=(K, \Omega)$. Suppose $u$ is continuous in $\Omega$. %with $u=1$ on $C$ and $\displaystyle\lim_{x\to y}u(x)=0$ for all $y\in \partial \Omega$. 
Then for all $0\le \alpha <\beta\le1$,
\[
\capa_p (\Omega\cap  \{u\ge \beta\}, \Omega\cap  
 \{u>\alpha\})=(\beta-\alpha)^{1-p} \capa_p (K,\Om).
\]
\end{lem}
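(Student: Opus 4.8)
\textbf{Proof plan for Lemma \ref{cap01}.}
The plan is to exploit the fact that $u$ is the $p$-potential of $E=(K,\Omega)$ and that level sets of $u$ can be rescaled linearly to produce admissible test functions for the relative capacity on the nested pair of open sets $\{u>\alpha\}\supset\{u\ge\beta\}$. First I would set $\Omega_\alpha=\Omega\cap\{u>\alpha\}$ and $K_\beta=\Omega\cap\{u\ge\beta\}$, and consider the function
\[
w=\min\!\left(1,\max\!\left(0,\frac{u-\alpha}{\beta-\alpha}\right)\right).
\]
By the chain/truncation rules recalled after \eqref{comparable}, $w\in N^{1,p}(X)$, $w=1$ on $K_\beta$, $w=0$ on $X\setminus\Omega_\alpha$ (using continuity of $u$ to see $\{u\le\alpha\}\supset X\setminus\Omega_\alpha$ and $u=0$ on $X\setminus\Omega$), so $w$ is admissible for $\capa_p(K_\beta,\Omega_\alpha)$. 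Its minimal upper gradient is $(\beta-\alpha)^{-1}g_u$ on the set $\{\alpha<u<\beta\}$ and $0$ elsewhere, which immediately gives the upper bound
\[
\capa_p(K_\beta,\Omega_\alpha)\le (\beta-\alpha)^{-p}\int_{\{\alpha<u<\beta\}}g_u^p\,d\mu.
\]

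The heart of the matter is to show that this inequality is in fact an equality and that $\int_{\{\alpha<u<\beta\}}g_u^p\,d\mu=(\beta-\alpha)\capa_p E$. For the first point, I would argue that $w$ is itself $p$-harmonic in $\Omega_\alpha\setminus K_\beta=\{\alpha<u<\beta\}$ — because there $w$ is an increasing affine function of the $p$-harmonic function $u$, and affine images of $p$-harmonic functions are $p$-harmonic (this follows from Definition \ref{cpl} and the linearity of $D$, since $|Dw|^{p-2}Dw\cdot D\phi=(\beta-\alpha)^{1-p}|Du|^{p-2}Du\cdot D\phi$) — and $w$ has the correct boundary values, hence $w$ is the $p$-potential of $(K_\beta,\Omega_\alpha)$; the capacity is then exactly the energy of the potential, giving equality above. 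For the second point, the key identity is that for any $0\le s<t\le 1$ the energy of $u$ over the "annular" set $\{s<u<t\}$ is proportional to $t-s$; this is a consequence of $u$ being $p$-harmonic and a Caccioppoli-type / testing argument: testing \eqref{cpl1} on $\Omega\setminus K$ with $\phi=\min(t,\max(s,u))-s$ (which lies in $N^{1,p}_0(\Omega)$ after subtracting a constant and is supported where $u>s$) one isolates $\int_{\{s<u<t\}}g_u^p\,d\mu$, and combining the cases $(s,t)=(0,1)$, $(0,\alpha)$, $(0,\beta)$, $(\alpha,\beta)$ yields $\int_{\{\alpha<u<\beta\}}g_u^p\,d\mu=(\beta-\alpha)\int_{\{0<u<1\}}g_u^p\,d\mu=(\beta-\alpha)\capa_p E$, where the last equality uses that $g_u=0$ a.e.\ on $\{u=0\}$ and $\{u=1\}$ and that the energy of the potential equals $\capa_p E$.

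Putting these together gives
\[
\capa_p(K_\beta,\Omega_\alpha)=(\beta-\alpha)^{-p}\cdot(\beta-\alpha)\,\capa_p E=\frac{\capa_p E}{(\beta-\alpha)^{p-1}},
\]
as claimed. I expect the main obstacle to be the rigorous justification that $\int_{\{s<u<t\}}g_u^p\,d\mu$ depends linearly on $t-s$: one must be careful that the truncated test function $\min(t,\max(s,u))-s$ genuinely belongs to $N^{1,p}_0(\Omega)$ (it should, since $u-\mathbf{1}$ or rather the appropriate truncation has zero boundary values by the potential property of $u$ together with continuity), that the testing is legitimate on the open set $\Omega\setminus K$ where $u$ is $p$-harmonic, and that minimal upper gradients of truncations behave as expected (i.e.\ $g_u\mathbf{1}_{\{s<u<t\}}$ is the minimal $p$-weak upper gradient of the truncation up to the affine shift). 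A secondary technical point is handling the boundary behavior on $\partial K$ and $\partial\Omega$, where continuity of $u$ in $\Omega$ (hypothesis) and the definition of $N^{1,p}_0$ keep everything well defined.
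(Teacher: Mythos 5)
Your overall strategy is the same as the paper's: rescale and truncate $u$ to get $w=\min\bigl(1,\max\bigl(0,\tfrac{u-\alpha}{\beta-\alpha}\bigr)\bigr)$, observe that $w$ is the $p$-potential of the condenser $(\Omega\cap\{u\ge\beta\},\Omega\cap\{u>\alpha\})$, and reduce everything to the energy identity $\int_{\{s<u<t\}}|Du|^p\,d\mu=(t-s)\capa_p E$. The admissibility and harmonicity of $w$, and the final algebra, are fine. But the step you yourself flag as the main obstacle does fail as written: the function $\phi=\min(t,\max(s,u))-s$ equals $t-s$ on $K$ (where $u=1\ge t$), so it does not belong to $N^{1,p}_0(\Omega\setminus K)$, and no additive constant repairs this, since any shift that kills the value on $K$ produces a nonzero value on $X\setminus\Omega$ (where $u=0$). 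Hence you may not insert it into \eqref{cpl1} on $\Omega\setminus K$; doing so would yield $0=\int_{\{s<u<t\}}|Du|^p$, which is false, and no combination of the cases $(0,1),(0,\alpha),(0,\beta),(\alpha,\beta)$ can recover the linear dependence on $t-s$ from a list of zeroes.

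The correct test function --- and this is exactly what the paper uses --- is $\phi=u-w_{s,t}$ with $w_{s,t}=\bigl(\min(t,\max(s,u))-s\bigr)/(t-s)$: it vanishes q.e.\ both on $K$ (where $u=1$ and $w_{s,t}=1$) and on $X\setminus\Omega$ (where $u=0$ and $w_{s,t}=0$), hence lies in $N^{1,p}_0(\Omega\setminus K)$, and testing gives
\[
\capa_p E=\int_\Omega|Du|^p=\int_\Omega|Du|^{p-2}Du\cdot Dw_{s,t}=\frac{1}{t-s}\int_{\{s<u<t\}}|Du|^p ,
\]
which is the energy identity in one line, for every pair $(s,t)$ at once; your argument then closes. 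One further consistency point: you state the upper bound with the minimal upper gradient $g_u$, whereas the Euler--Lagrange equation and the identification ``capacity $=$ energy of the potential'' are with respect to the Cheeger gradient $|Du|$. These are only comparable, not equal, so throughout this lemma the capacity must be understood as $\inf\int|Dv|^p$ (as the paper's own computation $\capa_p E=\int|Du|^{p-2}Du\cdot Du$ indicates); otherwise the exact equality claimed in the lemma is lost.
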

\begin{proof}
To prove this lemma, we define the following function 
\[
w(x)=\begin{cases} 0 &\text{if } u(x)\le \alpha\\
\displaystyle \frac{u(x)-\alpha}{\beta-\alpha}& \text{if } \alpha<  u(x)< \beta\\
1& \text{if }   u(x)\ge \beta.\\
\end{cases}
\]
Then $w$ is 
%\textcolor{blue}
{is continuous and} is the $p$-potential of  $E'=(\Omega\cap\{u\ge \beta\},\, \Omega\cap\{u>\alpha\})$. Since $u$ is $p$-harmonic in $\Omega\setminus K$, and $u, w$ have the same boundary values on the boundary of the ring $(K,\Om)$ we can take $\phi=u-w$ as a test function and conclude 
\[
\begin{aligned}
%\capa_p(E')
\capa_p (K,\Om)
&= \int_\Om |Du|^{p-2}Du\cdot D u=\int_{\Om\cap \{u>\alpha\}} |Du|^{p-2}Du\cdot D w\\
&=(\beta-\alpha)^{p-1}\int_{\Om\cap \{u>\alpha\}} |Dw|^p\\&
=(\beta-\alpha)^{p-1}\capa_p (\Omega\cap  \{u\ge \beta\}, \Omega\cap  
 \{u>\alpha\}).  \qedhere
\end{aligned}
\]
\end{proof}
 A similar result was recently proved for $p$-energy minimizers in  \cite[Theorem 3.3]{BBLe20}.

A metric measure space $(X,d,\mu)$ is {\em $p$-hyperbolic} if there is a compact set $K\sub X$ such that $\capa_p(K)>0$. On the other hand, if for each ball $B\sub X$ we have  $\capa_p(B)=0$, then we say that the space is {\em $p$-parabolic}. It is shown in \cite[Theorem 3.14]{HS02} that a space $X$ is $p$-hyperbolic if and only if for every $y\in X$ there exists a singular solution bounded from below. The characterization of $p$-parabolicity in terms of volume growth on a non-compact complete Riemannian manifold is investigated in \cite{HK01}.

Suppose $E$ and $F$ are both closed subsets of an open set $\Omega$ in $X$. The triple $(E, F; \Omega)$ is called a {\em condenser} and its $p$-capacity is defined as
\[
\capa_p(E,F; \Omega)=\inf_{u}\int {|Du|}^p\, d\mu,
\]
where the infimum is taken over all $u\in N^{1,p}(\Omega)$ such that $u(x)=1$ for $x\in E$ and $u(x)=0$ for $x\in F$. Note $\capa_p(E,F; X)=\capa_p (E, F^c)$ if $E\subset F^c$.

The following Loewner-type property \cite[Theorem 5.7, Theorem 3.6]{HK98} is essential in the proof. 

%\textcolor{red}{The upper bound $\mu(B(r))\le Cr^Q$ from the Ahlfors regular condition seems crucial in this argument.}
%\begin{comment}
%
%\begin{lem}\label{cap}
%Let $(X, d,\mu)$ be a complete $Q$-Ahlfors regular space supporting a $Q$-Poincar\'e inequality. Then there exists a decreasing homeomorphism $\phi: (0, \infty)\to (0, \infty)$ such that 
%\begin{equation}\label{cap lower bound}
%\mbox{Cap}_p(E,F;X)\ge \phi\Big(\frac{\dist(E,F)}{\min\diam(E), \diam(F)\}}\Big)
%\end{equation}
%for all non-degenerate, disjoint continua (compact and connected sets) $E, F$ in $X$. Furthermore, the function $\phi$ can be chosen so that $\phi(t)\approx \log(1/t)$ as $t\to 0$ and $\phi(t)\approx \log(t)^{1-Q}$ as $t\to \infty$. 
%\end{lem}
%
%\end{comment}
%{weaker assumption version}
\begin{lem}\label{lem:cap lower bound}
Let $(X, d,\mu)$ be a complete doubling metric measure space supporting a Q-Poincar\'e inequality for $ Q\ge 1$. If there exists a constant $C\ge 1$ such that $C^{-1}R^Q\le \mu(B_R)$ for all $B_R\in X$ with $R\le \diam(X)$, then there exists a homeomorphism $\phi\: (0, \infty)\to (0, \infty)$ such that

\begin{equation}\label{eq:cap lower bound}
\capa_Q(E,F;X)\ge \phi\biggl(\frac{\dist(E,F)}{\min\{\diam(E), \diam(F)\}}\biggr)
\end{equation}
for all non-degenerate, disjoint continua $E$ and $F$ in $X$. 
\end{lem}

If we further assume that  $X$ is $Q$-Ahlfors regular, then more can be said about  the asymptotic behaviors of the function $\phi$ near zero and near  infinity (see \cite[Theorem 3.6]{HK98}).  

\begin{lem}\label{cap}
Let $(X, d,\mu)$ be a complete $Q$-Ahlfors regular space supporting a $Q$-Poincar\'e inequality. Then the decreasing homeomorphism $\phi\: (0, \infty)\to (0, \infty)$ in  \eqref{eq:cap lower bound} can be chosen so that $\phi(t)\asymp \log(1/t)$ as $t\to 0$ and $\phi(t)\asymp \log(t)^{1-Q}$ as $t\to \infty$. 
\end{lem}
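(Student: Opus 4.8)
The plan is to recognize the function $\phi$ in \eqref{cap lower bound} as a regularization of the \emph{Loewner function} of $X$ and to read off its behaviour at $0$ and at $\infty$ from explicit competitors together with a chaining estimate. Write $\Delta(E,F)=\dist(E,F)/\min\{\diam E,\diam F\}$ and set
$$
\phi_*(t)=\inf\bigl\{\capa_Q(E,F;X): E,F\ \text{non-degenerate, disjoint continua in }X,\ \Delta(E,F)\le t\bigr\}.
$$
This $\phi_*$ is non-increasing, it is positive by Lemma~\ref{cap lower bound} (equivalently, $\capa_Q(E,F;X)=\operatorname{mod}_Q\Gamma(E,F)$, so Lemma~\ref{cap lower bound} is exactly the $Q$-Loewner property), and by construction $\capa_Q(E,F;X)\ge\phi_*(\Delta(E,F))$. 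The lemma therefore reduces to the two two-sided bounds $\phi_*(t)\asymp\log(1/t)$ as $t\to0$ and $\phi_*(t)\asymp(\log t)^{1-Q}$ as $t\to\infty$; a routine construction then produces a decreasing homeomorphism $\phi$ of $(0,\infty)$ with $\phi\le\phi_*$ and $\phi\asymp\phi_*$, which is still a valid lower bound in \eqref{cap lower bound} and inherits these asymptotics.

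For the \emph{upper} bounds on $\phi_*$ I would exhibit near-extremal configurations. For large $t$, fix $x_0\in X$ and $\delta>0$; since a complete PI-space is quasiconvex, such continua exist, and one may choose $E$ with $x_0\in E\subset\overline{B}(x_0,\delta)$ and $\diam E\asymp\delta$, and $F\subset X\setminus B(x_0,t\delta)$ with $\diam F\ge\delta$, so that $\Delta(E,F)\asymp t$. The truncated logarithm
$$
u(x)=\min\Bigl\{1,\max\Bigl\{0,\tfrac{\log(t\delta/d(x,x_0))}{\log t}\Bigr\}\Bigr\}
$$
is admissible for the condenser $(E,F;X)$, with upper gradient $g(x)=\bigl(d(x,x_0)\log t\bigr)^{-1}$ on the annulus $\{\delta<d(\cdot,x_0)<t\delta\}$ and $0$ elsewhere; decomposing that annulus into dyadic shells and using the upper mass bound $\mu(B(x,r))\lesssim r^Q$ gives $\int_X g^Q\,d\mu\lesssim(\log t)^{1-Q}$, hence $\phi_*(t)\lesssim(\log t)^{1-Q}$. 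For small $t$ one uses a pinched configuration instead: two nearly collinear geodesic sub-arcs $E,F$ of length $\asymp\delta$ separated by a gap of length $t\delta$, so that $\Delta(E,F)\asymp t$; the $Q$-capacity of such a Teichm\"uller-type condenser is $\lesssim\log(1/t)$, which I would quote from \cite[Theorem 3.6]{HK98}, giving $\phi_*(t)\lesssim\log(1/t)$.

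For the \emph{lower} bounds I would chain. For large $t$: let $(E,F)$ be any admissible pair with $\Delta(E,F)\le t$, normalized so that $\diam E=1\le\diam F$ and $\dist(E,F)=s\le t$; when $s<1$ the bound is trivial by positivity of $\phi_*$, so assume $s\ge1$. Join $E$ to $F$ by an $L$-quasiconvex curve $\sigma$ of length $\asymp s$ and build a chain of balls $B_1,\dots,B_M$ with centres on $\sigma$ and radii comparable to $1+\min\{\dist(\cdot,E),\dist(\cdot,F)\}$ along $\sigma$; then $M\asymp\log s\le\log t$ and the dilates $\lambda B_k$ have bounded overlap. Working with the continuous $Q$-harmonic capacitary potential $u$ of $(E,F;X)$, telescoping from $B_1$ to $B_M$ along the chain, using a Maz'ya--Poincar\'e inequality at the two ends to compare $u_{B_1}$ with $1$ and $u_{B_M}$ with $0$, the $Q$-Poincar\'e inequality on each link, and the mass lower bound $\mu(B(x,r))\gtrsim r^Q$, yields
$$
1\lesssim\sum_{k=1}^M\Bigl(\int_{\lambda B_k}\rho_u^Q\,d\mu\Bigr)^{1/Q}\le M^{(Q-1)/Q}\Bigl(C\int_X\rho_u^Q\,d\mu\Bigr)^{1/Q},
$$
the last step being H\"older's inequality and bounded overlap; hence $\capa_Q(E,F;X)=\int_X\rho_u^Q\,d\mu\gtrsim M^{1-Q}\gtrsim(\log t)^{1-Q}$, so $\phi_*(t)\gtrsim(\log t)^{1-Q}$. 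For small $t$ the matching bound $\phi_*(t)\gtrsim\log(1/t)$ is the quantitative Loewner estimate for close continua, which I would quote from Heinonen--Koskela \cite[Theorem 5.7]{HK98} — the estimate underlying Lemma~\ref{cap lower bound} — now keeping track of the logarithmic rate.

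The main obstacle is the pair of small-$t$ estimates. Their lower bound is the heart of the Heinonen--Koskela Loewner theorem and genuinely uses the full $Q$-Poincar\'e inequality together with $Q$-Ahlfors regularity (the round-annulus competitors of the large-$t$ regime are useless here, since for close continua the capacity blows up), while the matching upper bound needs an efficient competitor on a pinched ring rather than a round one; these are the steps I would cite from \cite{HK98} rather than reprove. The large-$t$ estimates are comparatively routine: the upper bound is the logarithmic cutoff displayed above, and the lower bound is the chaining argument, whose only delicate point is calibrating the radii of the chain so that exactly $\asymp\log t$ balls of bounded overlap connect $E$ to $F$.
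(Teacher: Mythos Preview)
The paper does not prove this lemma at all: it is stated as a direct citation of \cite[Theorem~3.6]{HK98}, with no argument given.  Your proposal therefore goes well beyond the paper by actually sketching the proof, and the sketch is essentially the Heinonen--Koskela argument.  The large-$t$ part is correctly outlined: the logarithmic cutoff on a round annulus gives the upper bound, and the Whitney-type chain of $\asymp\log t$ balls together with the $Q$-Poincar\'e inequality and $Q$-Ahlfors regularity gives the lower bound.  For the small-$t$ part you yourself defer the hard steps to \cite{HK98}, which is precisely what the paper does for the entire statement.  So there is no discrepancy of approach; you have simply unpacked a reference that the paper leaves packed.

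One small circularity to flag: for the small-$t$ \emph{upper} bound you write that the capacity of the pinched Teichm\"uller-type configuration is $\lesssim\log(1/t)$ ``which I would quote from \cite[Theorem~3.6]{HK98}'', but that theorem is exactly the assertion you are proving.  If you want a self-contained argument there, you must build the competitor directly (e.g.\ a superposition of truncated logarithmic potentials centred along the gap, or the explicit Gr\"otzsch/Teichm\"uller-type test function), rather than cite the conclusion.  The same remark applies, as you already acknowledge, to the small-$t$ lower bound.
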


We will use the following capacity estimates which is a special case of  Theorem 1.2 in  \cite{BBLe17}: if $(X,d. \mu)$ is an unbounded  Ahlfors $Q$-regular space supporting a $Q$-Poincar\'e inequality, then there exists a constant $C$ depending on the structure conditions such that for all $x_0\in X$ and all  $0<2r\le R$, 
\begin{equation}\label{Q metric ring}
C^{-1}\log\left(\frac{R}{r}\right)^{1-Q} \le \capa_Q(\overline{B}_r(x_0), B_R(x_0))\le C\log\left(\frac{R}{r}\right)^{1-Q}. 
\end{equation}

\section{Green functions on relatively compact domains}
In this section, we assume that $(X,d,\mu)$ is  a PI-space and fix a measurable differentiable structure on $X$. We define the Green function on a relatively compact  subregion $\Om$ of $X$ as follows.

\begin{defn}\label{bd def} 
Let $p>1$, $\Omega\sub X$ be a relatively compact region, and  $x_0\in \Omega$. 
A continuous real-valued function $u$ is called a {\em $p$-harmonic Green function on $\Om$ with singularity at $x_0\in \Omega$}  if the following conditions are satisfied:
\begin{enumerate}[label=(\roman*), font=\upshape]

\item  $u$ is (Cheeger) $p$-harmonic on $\Omega\setminus \{x_0\}$.

\smallskip
\item  $\displaystyle\lim_{x\to x_0} u(x)=\capa_p(\{x_0\}, \Omega)^{\frac{1}{1-p}}$.

\smallskip
\item  $u=0$ $p$-quasi-everywhere  on $X\setminus \Omega$.

\smallskip
\item  $\capa_p(\Omega\cap \{u\ge \beta\},\Omega\cap  \{u>\alpha\})=(\beta-\alpha)^{1-p}$ for all $0\le \alpha<\beta<\infty$.
\end{enumerate}
\end{defn}
The existence of such Green functions  has   been recently proved in \cite{BBLe20} (see Theorem 1.3 (a), Lemma 5.7, Theorem 7.2, Theorem 7.4, and Theorem 8.2 in \cite{BBLe20}).

In fact, if one can find a function satisfying conditions (i), (ii), (iii) of Definition \ref{bd def}, then we can always normalize the function to obtain condition (iv), see \cite[Theorem 9.3]{BBLe20}.

\subsection{Uniqueness}
%\textcolor{blue}
Recall that if $\Om\sub X$ is fixed,  we use the notation $\{ u>\alpha\}$ for the set $\{ x\in \Omega \colon u(x)>\alpha\}$. We begin with an auxiliary result.
\begin{lem} \label{lem:nococo} 
Let $X$ be a PI-space, $\Omega\subset X$ be an open set,
and $u\: \Om\ra \R$ be a  continuous $p$-harmonic function with $p>1$. Then for 
each $\alpha\in \R$ the sets $\{ u>\alpha\}$ and $\{u<\alpha\}$ have no (non-empty) components that are compactly contained in $\Om$.
\end{lem}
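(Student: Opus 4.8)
The plan is to argue by contradiction using the maximum principle together with the Harnack-type estimates that have been recorded for Cheeger $p$-harmonic functions. Suppose, for definiteness, that $V$ is a non-empty component of $\{u>\alpha\}$ with $\overline V\subset\Omega$ compact (the case of $\{u<\alpha\}$ is symmetric, applying the argument to $-u$, which is $p$-harmonic for the reflected differentiable structure, or directly via the weak minimum principle). Since $V$ is a component of the open set $\{u>\alpha\}$, it is itself open, and because $\overline V$ is a compact subset of $\Omega$, its topological boundary $\partial V$ lies in $\Omega$. On $\partial V$ one has $u=\alpha$: indeed $u\ge\alpha$ there by continuity of $u$ (which holds by the H\"older estimate recorded after the Harnack inequality), and if $u(y)>\alpha$ for some $y\in\partial V$ then a whole neighborhood of $y$ would lie in $\{u>\alpha\}$ and, being connected and meeting $V$, would be contained in $V$, contradicting $y\in\partial V$.

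Next I would show that $u\equiv\alpha$ on $V$, which is the desired contradiction since $V\subset\{u>\alpha\}$ forces $u>\alpha$ on $V$ and $V\ne\emptyset$. The function $v:=u-\alpha$ is $p$-harmonic in $V$ (the equation \eqref{cpl1} is insensitive to additive constants, as $D(\text{const})=0$), it is continuous on $\overline V$, and $v=0$ on $\partial V$. The key point is that $v\in N^{1,p}(V)$ with $\min(v,0)=\min(u-\alpha,0)\in N^{1,p}_0(V)$ and also $\min(-v,0)\in N^{1,p}_0(V)$: continuity of $v$ up to $\partial V$ with boundary value $0$, together with the compact containment $\overline V\subset\Omega$ and membership $u\in N^{1,p}_{loc}(\Omega)$, gives $v\in N^{1,p}_0(V)$ after truncation. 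Then the comparison principle (the Lemma stated just before the Weak maximum principle) applied to $v$ and the zero function in the open set $V$ yields $v\ge 0$ a.e.\ and $v\le 0$ a.e.\ in $V$, hence $v=0$ a.e., and by continuity $v\equiv 0$ on $V$. This contradicts $V\subset\{u>\alpha\}$.

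I expect the main obstacle to be the justification that $v=u-\alpha$, suitably truncated, actually lies in $N^{1,p}_0(V)$ — that is, that the continuous vanishing of $v$ on $\partial V$ translates into the Newtonian zero-boundary-value condition needed to invoke the comparison principle. The clean way to handle this is to work with $v_\varepsilon:=\max(v-\varepsilon,0)$ and $w_\varepsilon:=\min(v+\varepsilon,0)$ for $\varepsilon>0$: each is continuous on $\overline V$ and vanishes on a neighborhood (relative to $\overline V$) of $\partial V$ by continuity and compactness of $\partial V$, hence its extension by $0$ outside $V$ lies in $N^{1,p}(X)$ with representative vanishing on $X\setminus V$, so $v_\varepsilon, w_\varepsilon\in N^{1,p}_0(V)$; letting $\varepsilon\to 0$ and using that $N^{1,p}_0(V)$ is closed under the relevant limits recovers $\min(v,0),\min(-v,0)\in N^{1,p}_0(V)$. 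One should also note that the H\"older/Harnack machinery quoted above is stated for non-negative $p$-harmonic functions, so to get continuity of $u$ near $\partial V$ one applies it locally to $u-\inf_{\Omega'}u$ on a slightly larger relatively compact $\Omega'$; alternatively one simply invokes that Cheeger $p$-harmonic functions are locally H\"older continuous, which is part of the standard theory recalled in Section~2. Modulo this boundary-regularity bookkeeping, the argument is short and self-contained.
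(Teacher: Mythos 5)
Your argument is correct and is essentially the paper's proof: identify that $u\equiv\alpha$ on the boundary of a compactly contained component and then invoke the comparison principle against the constant function to force $u\equiv\alpha$ there, contradicting $u>\alpha$. The only difference is that you carefully justify the hypothesis $\min(\alpha-u,0)\in N^{1,p}_0(V)$ via the truncation $\max(v-\varepsilon,0)$, a technical point the paper's proof leaves implicit; that elaboration is welcome but does not change the approach.
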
 

\begin{proof} 
To reach a contradiction, suppose $\{ u>\alpha\}$  for some $\alpha\in \R$ has a component $U$ that is compactly contained in
$\Om$. Then each point $y_0\in \partial U\ne \emptyset $ lies in $\Omega$. In each neighborhood $N$ of  $x_0$ there are points both in  
$\{ u>\alpha\}$ and in the complement of this set; in particular,  
$N$  contains points  $x,x'\in N$ for which $u(x)>\alpha$ and $u(x')\le \alpha$. Hence $u(y_0)=\alpha$, since $u$ is continuous. This shows that $u|_{\partial U}=\alpha$. 
The comparison principle  now   implies that $u\equiv \alpha$ on $U$; but we know that $u>\alpha$ on $U$. This is a contradiction showing that $\{ u>\alpha\}$ cannot have components compactly contained in $\Om$. 
The same argument also   proves the statement for $\{ u<\alpha\}$.   
\end{proof}

\begin{thm}\label{thm1}
Consider a complete doubling metric measure space $(X, d, \mu)$. Assume that %there is a upper mass bound $\mu(B(x,r))\le C r^Q$ for some  $Q>1$, 
the space $(X,d,\mu)$ is $Q$-Ahlfors regular and supports a  $Q$-Poincar\'e inequality with $Q>1$.  Let $\Omega\subset X$ be a relative compact domain,  which is regular for the Dirichlet problem for the $Q$-Laplacian. Let $x_0\in \Omega$ and $u, v$ be two $Q$-harmonic Green functions with singularity at $x_0$. If there exists $y_0\in \Omega$ such that $u(y_0)=v(y_0)$, then $u=v$ on $\Omega\setminus\{x_0\}$.
\end{thm}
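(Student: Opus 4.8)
The plan is to combine the comparison principle with an analysis of the behavior at the pole, the latter being where condition (iv) of Definition~\ref{bd def} is used essentially.

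First I would record the elementary structure. Since $\partial\Omega$ is regular for the $Q$-Laplacian, $u$ and $v$ extend continuously to $\overline{\Omega}\setminus\{x_0\}$ with $u=v=0$ on $\partial\Omega$; by the weak minimum principle $u,v>0$ on $\Omega\setminus\{x_0\}$; by Theorem~\ref{metric ring capacity estimates} and the Harnack inequality $u(x),v(x)\to+\infty$ as $x\to x_0$ with $u(x)\asymp v(x)\asymp\log\tfrac1{d(x,x_0)}$; and by Lemma~\ref{lem:nococo} every superlevel set $\{u>t\}$ with $t>0$ is open, connected, relatively compact in $\Omega$, and contains $x_0$. Since the statement is symmetric in $u$ and $v$, it suffices to prove $u\le v$ on $\Omega\setminus\{x_0\}$. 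Assume not, and set $A:=\{x\in\Omega\setminus\{x_0\}:u(x)>v(x)\}\ne\emptyset$. If $U$ is a component of $A$ with $x_0\notin\overline{U}$, then $u$ and $v$ are $Q$-harmonic functions in $N^{1,Q}(U)$ that agree on $\partial U$ (by continuity at interior points and $u=v=0$ on $\partial\Omega$), so the comparison principle forces $v\ge u$ on $U$, contradicting $u>v$ there. Hence every component of $A$ has $x_0$ on its boundary.

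The heart of the matter, and the step I expect to be the main obstacle, is the pole estimate
\[
\limsup_{x\to x_0}\bigl(u(x)-v(x)\bigr)\le 0,\qquad\text{equivalently}\qquad\lim_{x\to x_0}\frac{u(x)}{v(x)}=1 .
\]
This is the metric-space surrogate for the cancellation of the singular parts of $u$ and $v$ at $x_0$, and it is precisely where condition (iv) enters: (iv) is equivalent to $\capa_Q(\{u\ge t\},\Omega)=t^{1-Q}$ for every $t>0$ — equivalently, to the normalization of the charge of $u$ at $x_0$ in the sense of Theorem~\ref{fundamental solution} — and likewise for $v$. To exploit this I would compare $u$ and $v$ on the small superlevel sets $\Omega_t=\{u>t\}$ and $\{v>s\}$, using that $u-t$ is itself a $Q$-harmonic Green function of $\Omega_t$ (so its normalization is again governed by (iv)), the Harnack inequality near $x_0$ (which makes these level sets comparable to concentric balls), and the sharp logarithmic capacity estimates of Lemma~\ref{cap}, where $Q$-Ahlfors regularity is used; since the normalizations of $u$ and $v$ coincide, any discrepancy between them must vanish as $t\to\infty$. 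Carrying this out rigorously is the technical core; it is also where the hypothesis $u(y_0)=v(y_0)$ would be invoked, to pin down the overall scale should the comparison first be obtained only up to a multiplicative constant.

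Granting the pole estimate, I would conclude by a shaving argument. Fix a component $U$ of $A$; by the second step $x_0\in\partial U$ and $u=v$ on $\partial U\setminus\{x_0\}$. For $\delta>0$ put $U_\delta:=U\setminus\overline{B(x_0,\delta)}$, which is a nonempty open subset of $\Omega\setminus\{x_0\}$ for all small $\delta$ (since $U$ is connected with $x_0\in\partial U$), and let $\varepsilon(\delta):=\sup\{u(x)/v(x)-1:x\in\partial B(x_0,\delta)\cap U\}$, so that $\varepsilon(\delta)^{+}\to0$ as $\delta\to0$ by the pole estimate. On $\partial U_\delta$ one has $u=v$ on $\partial U\setminus B(x_0,\delta)$ and $u\le(1+\varepsilon(\delta)^{+})v$ on $\partial B(x_0,\delta)\cap U$; hence $u\le(1+\varepsilon(\delta)^{+})v$ on $\partial U_\delta$, and since $u$ and $(1+\varepsilon(\delta)^{+})v$ are $Q$-harmonic in $U_\delta$, the comparison principle gives $u\le(1+\varepsilon(\delta)^{+})v$ throughout $U_\delta$. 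Letting $\delta\to0$ yields $u\le v$ on $U$, contradicting $u>v$. Therefore $A=\emptyset$, i.e.\ $u\le v$; by symmetry $v\le u$, and so $u=v$ on $\Omega\setminus\{x_0\}$.
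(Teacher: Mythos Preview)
Your overall strategy matches the paper's: reduce to the pole estimate $\lim_{x\to x_0} u(x)/v(x)=1$, then use the comparison principle on a component touching $x_0$ to force a contradiction. The step you single out as the technical core is precisely the paper's Lemma~\ref{bound}, which proves the stronger fact $|u-v|\le C$ on $\Omega\setminus\{x_0\}$; the ratio limit then follows since $u,v\to+\infty$. The argument there is: (a) from Lemma~\ref{lem:nococo} and regularity of $\partial\Omega$ one has $\{u>M(r)\}\subset B_r\subset\overline B_r\subset\{u\ge m(r)\}$, so condition~(iv) yields $m(r_0)+\capa_Q(\overline B_r,B_{r_0})^{1/(1-Q)}\le M(r)$ and $m(r)\le M(r_0)+\capa_Q(\overline B_r,B_{r_0})^{1/(1-Q)}$; (b) a uniform oscillation bound $M(r)-m(r)\le C$, obtained by applying the Loewner estimate of Lemma~\ref{cap lower bound} to the condenser $(\{u\ge M(r)-\eps\},\{u\le m(r)+\eps\})$---the components through the max and min points on $\Sigma_r$ reach $x_0$ and $\partial\Omega$ respectively, so both have diameter $\gtrsim r$ while their distance is $\le 2r$---and then invoking~(iv) once more. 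Combining (a) and (b), each of $u(x)$ and $v(x)$ equals $\capa_Q(\overline B_r,B_{r_0})^{1/(1-Q)}$ plus a bounded error, hence $|u-v|\le C$. Your sketch gestures at the right ingredients but does not isolate~(b), which is where the Loewner property (and thus $Q$-Ahlfors regularity via Lemma~\ref{cap lower bound}) enters decisively.

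Two minor points. Your ``equivalently'' is incorrect: $\limsup_{x\to x_0}(u-v)\le 0$ is strictly stronger than $\lim u/v=1$, and it is only the ratio version you actually need in the shaving step---and only the ratio version that Lemma~\ref{bound} delivers. Your shaving argument with $(1+\varepsilon(\delta)^{+})v$ is valid, but the paper short-circuits it: fixing $\lambda\in(0,1)$ with $\lambda u(z_0)>v(z_0)$ and looking at the component of $\{\lambda u>v\}$ through $z_0$ gives $\liminf_{x\to x_0} v/u\le\lambda<1$ in one stroke, contradicting $\lim v/u=1$. Finally, neither argument actually uses the hypothesis $u(y_0)=v(y_0)$; the contradiction already gives $u\le v$, and symmetry gives $v\le u$, so your speculation about where that hypothesis enters can be dropped.
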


We need the following lemma. 
\begin{lem}\label{bound}
Consider a complete $Q$-Ahlfors regular metric measure space $(X, d, \mu)$ with $Q>1$. %Assume that there is a lower mass bound $\mu(B(x,r))\ge C r^Q$ for some  $Q>1$, and that 
Assume the space $(X,d,\mu)$ supports a  $Q$-Poincar\'e inequality.  Let $\Omega\subset X$ be a relative compact domain,  which is regular for the Dirichlet problem for the $Q$-Laplacian.  If $u, v$ are two $Q$-harmonic Green functions on $\Om$ with singularity at $x_0\in \Om$, then there exists a constant $C$ depending on structure conditions such that \begin{equation}\label{bounded}
|u(x)-v(x)|\le C
\end{equation}
for all $x\in \Omega\setminus\{x_0\}$. 
%\textcolor{blue}{Furthermore, there exists $r_0>0$ such that for all $0<r<r_0$ }
\end{lem}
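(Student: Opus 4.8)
The plan is to compare the two Green functions $u$ and $v$ near the singularity $x_0$ and away from it, and to show that their difference is bounded on both regions. First I would record the asymptotic behavior of a $Q$-harmonic Green function near its pole. By Definition \ref{bd def}(iv) with the capacity estimates for metric rings in Theorem \ref{metric ring capacity estimates} (case $p=Q$, and using $Q$-Ahlfors regularity so that $Q(x_0)=Q$), one obtains two-sided bounds
\[
C^{-1}\log\!\Big(\frac{1}{d(x,x_0)}\Big)\le u(x)\le C\log\!\Big(\frac{1}{d(x,x_0)}\Big)
\]
for $x$ in a punctured ball $B(x_0,r_0)\setminus\{x_0\}$, and likewise for $v$, with a constant depending only on the data and on $\Omega$. (Concretely: for small $t$, $u$ restricted to $\partial B(x_0,t)$ is comparable, by the Harnack inequality on spheres \cite[Lemma 5.3]{BMS01}, to a single value, and property (iv) forces that value to be comparable to $\log(1/t)$ via Lemma \ref{cap} / Theorem \ref{metric ring capacity estimates}.) Subtracting the two estimates does \emph{not} immediately give boundedness of $u-v$, since each is only controlled up to a multiplicative constant; so this step alone is insufficient, and that is the crux of the difficulty.

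To get around this, I would instead work with the level sets. Fix a small radius $r_0$ so that $\overline{B}(x_0,r_0)\subset\Omega$ and both asymptotics above hold on $B(x_0,r_0)\setminus\{x_0\}$. On the sphere $\partial B(x_0,r_0)$, the Harnack inequality makes $u$ comparable to some value $a_0$ and $v$ comparable to some value $b_0$, both finite and positive. Consider the open set $W=\{x\in\Omega: u(x)>a_0\}$; by Lemma \ref{lem:nococo} (applied to $u-a_0$), $W$ has no component compactly contained in $\Omega$, and since $u\to\infty$ at $x_0$ while $u=0$ quasi-everywhere on $X\setminus\Omega$, the component of $W$ containing a punctured neighborhood of $x_0$ must reach out to near $\partial B(x_0,r_0)$; more precisely $W\cap(\Omega\setminus B(x_0,r_0))$ meets $\partial B(x_0,r_0)$. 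The key geometric input is then: a level set $\{u>t\}$ for large $t$ is contained in a small ball $B(x_0,\rho(t))$ with $\rho(t)\to0$, because outside $B(x_0,r_0)$ the function $u$ is $Q$-harmonic, continuous up to $\partial\Omega$ (here we use the regularity hypothesis), vanishes on $\partial\Omega$, and hence is bounded on $\Omega\setminus B(x_0,r_0)$ by the maximum principle; so $\{u>t\}\subset B(x_0,r_0)$ once $t$ exceeds that bound, and then the near-pole asymptotics refine this to $\{u>t\}\subset B(x_0,C e^{-t/C})$.

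Now I would exploit property (iv) quantitatively. For $t$ large, Definition \ref{bd def}(iv) gives $\capa_Q(\{u\ge t+1\},\{u>t\})=1$, and similarly for $v$. Using Lemma \ref{cap} (the $\phi(s)\asymp\log(1/s)$ behavior near $0$) together with the inclusions just established, the ring $\{u>t\}\setminus\{u\ge t+1\}$ lies between spheres of radii comparable to $e^{-(t+1)/C}$ and $e^{-t/C}$, and matching the capacity-one condition against the logarithmic capacity of such a ring pins down the radius of $\{u>t\}$ up to a \emph{bounded additive error in $t$}: that is, $\{u>t\}\subset B(x_0, e^{-t+C})$ and $\{u>t\}\supset B(x_0,e^{-t-C})\cap(\text{pole component})$, with the same $C$ working for $v$. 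From these sharp inclusions it follows that on each sphere $\partial B(x_0,\sigma)$ for small $\sigma$, both $u$ and $v$ lie in the interval $[\log(1/\sigma)-C,\log(1/\sigma)+C]$, hence $|u-v|\le 2C$ there; since this holds on a sequence of spheres shrinking to $x_0$ and also (by the maximum-principle boundedness argument) on $\Omega\setminus B(x_0,r_0)$, the maximum principle applied to the $Q$-harmonic function $u-v$ on the annular region between a small sphere and $\partial(\Omega\setminus B(x_0,r_0))$ gives $|u-v|\le C'$ on all of $\Omega\setminus\{x_0\}$.

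The main obstacle is precisely the passage from the \emph{multiplicative} two-sided estimate $u\asymp\log(1/d(x,x_0))$ to an \emph{additive} one $u=\log(1/d(x,x_0))+O(1)$; this is what property (iv) buys us, via the exact (not merely comparable up to constants) normalization of the capacity of level-set rings, combined with the sharp near-zero asymptotics of the capacity function $\phi$ in Lemma \ref{cap}. Everything else — reaching the boundary, reducing to an annulus, and invoking the maximum principle — is standard once that additive control is in hand.
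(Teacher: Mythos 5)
Your proposal correctly isolates the crux of the lemma (passing from the multiplicative estimate $u\asymp\log(1/d(x,x_0))$ to an additive one) and the overall architecture (compare both Green functions to a common radial profile near the pole, then finish with the maximum principle on an annulus) matches the paper. But the mechanism you propose for the key step does not work. You claim that matching the exact normalization $\capa_Q(\{u\ge t+1\},\{u>t\})=1$ against the logarithmic asymptotics of ring capacity ``pins down the radius of $\{u>t\}$ up to a bounded additive error in $t$,'' i.e.\ $\{u>t\}\subset B(x_0,e^{-t+C})$ and $\{u>t\}\supset B(x_0,e^{-t-C})$. The available estimates (Lemma \ref{cap}, Theorem \ref{metric ring capacity estimates}) only give $\capa_Q(\overline B_r,B_R)\asymp(\log(R/r))^{1-Q}$ up to \emph{multiplicative} constants, so the unit-capacity condition on each unit-height level-set ring yields only $c\le\log\bigl(\rho(t)/\rho(t+1)\bigr)\le C$ with $c\ne C$; summing over rings gives $ct+O(1)\le\log(1/\rho(t))\le Ct+O(1)$, which is exactly the multiplicative control you started with. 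Consequently your claimed interval $[\log(1/\sigma)-C,\log(1/\sigma)+C]$ for the values of $u$ and $v$ on $\partial B(x_0,\sigma)$ is not justified (and is false even in $\R^n$, where the correctly normalized $n$-harmonic Green function grows like $\kappa\log(1/r)$ with $\kappa\ne1$). A second instance of the same confusion: you invoke the Harnack inequality on spheres to control the oscillation of $u$ on $\partial B(x_0,r_0)$, but Harnack gives $M(r)\le Cm(r)$, which is useless additively where $u$ is large.

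The paper's proof repairs both points. First, it never compares $u$ to $\log(1/r)$: using the inclusions $\{u>M(r)\}\subset B_r\subset\overline B_r\subset\{u\ge m(r)\}$ (obtained from Lemma \ref{lem:nococo} and the boundary regularity, as in your sketch) together with property (iv), it sandwiches $M(r)$ and $m(r)$ between $\capa_Q(\overline B_r,B_{r_0})^{1/(1-Q)}+m(r_0)$ and $\capa_Q(\overline B_r,B_{r_0})^{1/(1-Q)}+M(r_0)$ --- the \emph{same exact} capacity quantity, with coefficient one, for both $u$ and $v$. Second, the additive oscillation bound $M(r)-m(r)\le C$ on spheres is proved not by Harnack but by applying the condenser capacity lower bound for continua (Lemma \ref{cap lower bound}) to $E=\{u\ge M(r)-\eps\}$ and $F=\{u\le m(r)+\eps\}$, which are continua of diameter $\ge r$ at distance $\le 2r$; property (iv) then forces $(M(r)-m(r)-2\eps)^{1-Q}\ge C$, i.e.\ $M(r)-m(r)\le C$. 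These two ingredients are what your proposal is missing, and without them the step from multiplicative to additive control does not go through.
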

\begin{proof} In the following, we use the notation 
 $B_r\coloneqq B(x_0, r)$, $\overline B_r  \coloneqq  \overline B(x_0, r)$,  and $\Sigma_r\coloneqq \{x\in X: d(x,x_0)=r\}$. 
 We fix $r_0>0$ such that $\dist(x_0,\partial \Om)\ge 2r_0$, 
 consider $0<r\le r_0$, and  define 
 $$m(r)=\min\{u(x):x\in \Sigma_r\}, \quad   
M(r)=\max\{u(x): x\in \Sigma_r\} .$$
The idea of the proof is to obtain precise asymptotic estimates 
for $M(r)$ and $m(r)$ in terms of $\capa_Q(\overline B_r, B_{r_0})$.
Since a similar  estimate will also be true for $v$, the  result
will follow.

We consider the set $\{u>M(r)\}\sub \Om'\coloneqq\Om\setminus\{x_0\}$. If $U$ is a component of this set, then $U$ cannot be compactly contained in $\Om'$ by Lemma~\ref{lem:nococo}. Hence $\partial U$ meets 
$\{x_0\}\cup \partial \Om$.  The hypothesis on the regularity of $\partial \Omega$ for the Dirichlet problem yields that  $u(x)\to 0$ as $x\to \partial \Om$.
Consequently $\overline U\cap \partial \Om=\emptyset$,
and so $x_0\in \partial U$. Since   
$$U\cap \Sigma_r\sub \Sigma_r \cap \{u>M(r)\}=\emptyset$$ by definition of $M(r)$, it follows that 
$U\sub B_r$. We conclude  that  $\{u>M(r)\}\sub B_r$ and that 
the set $\{u>M(r)\}$ contains $x_0$ in its boundary.

A similar argument, relying again on the regularity hypothesis for $\partial \Omega$,  shows that since  $u(x)\to \infty$  as $x\to x_0$ 
then
$\{u<m(r)\} \sub \Omega \setminus \overline B_r$,  or equivalently $\overline B_r\sub  \{u\ge m(r)\}$,   and that the boundary of 
$\{u<m(r)\}$ contains  $\partial \Om$.
%(HERE WE NEED: $u$ has a continuous extension to 
%$\partial \Om$ with values $0$ or a suitable version of the maximum principle). 

In particular, we have 
\begin{equation}\label{eq:incl}
\{u>M(r)\}\sub B_r\sub \overline{B}_r\sub \{ u\ge m(r)\}.
\end{equation}

Since $u$ is a Green function, for all $0< \alpha<\beta $ we have
\begin{equation}\label{eq:levelsets}
\capa_Q(\{u\ge\beta \}, \{ u>\alpha\})=\frac{1}{(\beta-\alpha)^{Q-1}}.
\end{equation}
For small $\varepsilon>0$ we apply this for $\beta=m(r_0)-\varepsilon$ and 
$\alpha =M(r)+\varepsilon$ . Note that 
$$\{u\ge M(r)+\varepsilon\}\sub \overline B_r\sub  
B_{r_0}\sub \{u>m(r_0)-\varepsilon\}. $$ Since the second inclusion is strict here and this holds for all $\varepsilon>0$, we must have $M(r)\ge m(r_0)$. 
It follows  that 
\begin{align*}
\capa_Q( \overline B_r, B_{r_0})&\ge 
\capa_Q(\{ u\ge M(r)+\varepsilon\}, \{u> m(r_0)-\varepsilon\})\\
&= \frac{1}{(M(r)-m(r_0)+2\varepsilon)^{Q-1}}. 
\end{align*} 
 Rearranging this inequality and letting $\varepsilon\to 0$,  we arrive at 
 \begin{equation}\label{eq:uppbdd}
M(r) \ge m(r_0)+ \biggl(\frac{1}{\capa_Q(\overline B_r, B_{r_0})}\biggr)^{\frac 1{Q-1}}.
\end{equation}

Next, we show that 
\begin{equation}\label{eq:uppbdd2}
m(r) \le M(r_0)+ \biggl(\frac{1}{\capa_Q(\overline B_r, B_{r_0})}\biggr)^{\frac 1{Q-1}}.
\end{equation}
This is obviously true if $m(r)\le M(r_0)$.  
If $m(r) >M(r_0)$, then 
$$  \overline B_r\sub \{u\ge m(r)\} \sub  \{u>M(r_0)\} \sub B_{r_0}$$ 
and it follows that 
\begin{align*}
\capa_Q( \overline B_r, B_{r_0})&\le 
\capa_Q(\{ u\ge m(r)\}, \{u> M(r_0)\})\\
&= \frac{1}{(m(r)-M(r_0))^{Q-1}},
\end{align*} 
thus yielding
\eqref{eq:uppbdd2}.
To obtain effective control for the growth of the function $u$ we need a bound for $M(r)-m(r)$. 

\smallskip
\noindent {\em Claim.} There exists a constant $C>0$ such that 
$M(r)-m(r)\le C$ for all $r>0$. 

\smallskip
 To prove the claim, observe that there exist point $y_1,y_2\in \Sigma_r$ such that $u(y_1)=M(r)$ and $u(y_2)=m(r)$.
If $M(r)=m(r)$, there is nothing to prove. Otherwise, $M(r)>m(r)$  and
 we consider a small $\varepsilon>0$ such that 
 $M(r)-\varepsilon> m(r)+\varepsilon$.
 Then $y_1\in \{u>M(r)-\eps\}$ and $y_2\in \{u<m(r)+\varepsilon\}$.
 Let $U$ be the connected component of   $\{u>M(r)-\varepsilon\}$  
 that contains $y_1$ and $V$ be the connected component 
 of $\{u<m(r)+\varepsilon\}$ that contains $y_2$. By what we have seen, 
 $x_0\in \partial U$ and $\partial V\cap \partial\Om \ne \emptyset$.
This implies that $\diam(U)\ge d(x_0, y_1)=r$ and 
\begin{align*}\diam(V)&\ge \dist (y_2, \partial \Om)\\
&\ge \dist (x_0, \partial \Om)-d(x_0, y_2)\ge 2r_0-r\ge r_0. 
 \end{align*}
 Define $E=\{u\ge M(r)-\varepsilon\}$ and $F=\{ u\le m(r)+\varepsilon\}$.
 Then $y_1\in U\sub E$ and $y_2\in V\sub F$. This implies that 
 $$ \diam(E)\wedge \diam (F) \ge   \diam(U)\wedge \diam (V)\ge r. $$
 On the other hand, 
 $\dist(E,F)\le  d(y_1, y_2)\le 2r.$
 By Lemma \ref{lem:cap lower bound}, we get that 
\[
\capa_Q(E,F; \Omega)\ge C
\]
with a constant $C>0$ independent of $E$ and $F$, and hence 
independent of $r$ and $\varepsilon$. On the other hand, 
\begin{align*}C\le \capa_Q(E,F; \Omega)&=\capa_Q(\{u\ge M(r)-\varepsilon\}, \{u>m(r)+\varepsilon\})\\
&=\frac{1}{(M(r)-m(r)-2\varepsilon)^{Q-1}}. 
\end{align*} 
Rearranging the latter and letting $\varepsilon\to 0$,  yields the desired claim (with a suitable constant independent of $r$ and $u$).

Now let  $x\in \Om$ be arbitrary. If $0<d(x,x_0)< r_0$, then we can apply all the previous estimates
for $r\coloneqq d(x_0,x)<r_0$. 
Then we see that 
\begin{align*} 
u(x) & \ge m(r) \ge M(r) -C\\
 &\ge {\capa_Q(\overline B_r, B_{r_0})}^{\frac 1{1-Q}}+m(r_0)-C. 
\end{align*}
and  
\begin{align*} 
u(x) & \le M(r) \le m(r) +C\\
 &\le {\capa_Q(\overline B_r, B_{r_0})}^{\frac 1{1-Q}}+M(r_0)+C.
 %\\
% &\le {\capa_Q(\overline B_r, B_{r_0})}^{\frac 1{1-Q}}+m(r_0)+2C. 
\end{align*}
In summary, we obtain 
\begin{equation}\label{capmn1}
{\capa_Q(\overline B_r, B_{r_0})}^{\frac 1{1-Q}}+M(r_0)-2C\le u(x)\le {\capa_Q(\overline B_r, B_{r_0})}^{\frac 1{1-Q}}+m(r_0)+2C
\end{equation}

%estimate for $R=d(x,x_0)\ge r_0$
%A similar argument yields the corresponding estimates for $R=d(x,x_0)\ge r_0$,
%\begin{equation}\label{capmn2}
%-{\capa_Q(\overline B_{r_0}, B_{R})}^{\frac 1{1-Q}}+M(r_0)-2C\le u(x)\le -{\capa_Q(\overline B_{r_0}, B_{R})}^{\frac 1{1-Q}}+m(r_0)+2C. 
%\end{equation}
Moreover, $\{u\ge M(r_0)\}\subset \overline{B}_{r_0}\subset \{ u\ge m(r_0)\}$ implies that
\[
\capa_Q (\{u\ge M(r_0)\}, \Omega)\le \capa_Q(\overline{B}_{r_0} ,\Omega)\le  \capa_Q(\{u\ge m(r_0)\}, \Omega),
\]
while 
\[
\capa_Q (\{u\ge M(r_0)\}, \Omega)=M(r_0)^{1-Q} \quad{and}\quad \capa_Q (\{u\ge m(r_0)\}, \Omega)=m(r_0)^{1-Q} .
\]
Hence, we obtain that $$m(r_0)\le \capa_Q (\overline{B}_{r_0}, \Omega)^{\frac{1}{1-Q}}\le M(r_0).$$ Combining with \eqref{capmn1}, we get
\begin{align*}
&{\capa_Q(\overline B_r, B_{r_0})}^{\frac 1{1-Q}}+\capa_Q (\overline{B}_{r_0}, \Omega)^{\frac{1}{1-Q}}-2C\\
\le &u(x)\le {\capa_Q(\overline B_r, B_{r_0})}^{\frac 1{1-Q}}+\capa_Q (\overline{B}_{r_0}, \Omega)^{\frac{1}{1-Q}}+2C.
\end{align*}
In other words, $u(x)$ agrees with  ${\capa_Q(\overline B_r, B_{r_0})}^{\frac 1{1-Q}} $ up to a uniformly bounded additive term with $r=d(x,x_0)<r_0$. If $d(x,x_0)\ge r_0$, we argue similarly and get that $u(x)$ agrees with  $-{\capa_Q(\overline B_{r_0}, B_{R})}^{\frac 1{1-Q}} $ up to a uniformly bounded additive term with $R=d(x,x_0)$.

If $v$ is another $Q$-harmonic Green function in $\Om$ with pole $x_0$, then all the previous considerations apply and we obtain similar estimates. It follows that there exists a constant $C>0$ depending on the structure conditions such that 
$|u(x)-v(x)|\le C$ on $\Om\setminus \{x_0\}$. 
\end{proof} 

The proof implies in particular the following 

\begin{cor}\label{bound-bis} In the hypotheses of Lemma \ref{bound}, if $u$ is a  $Q$-harmonic Green function on $\Om$ with singularity at $x_0\in \Om$, then there exists a constant $C>0$ depending only on the structure constants, such that  for $0<r<R$ one has \begin{equation}\label{bounded-bis1}
\bigg|u(x)-u(y) - {\capa_Q( \overline B_r, B_{R})}^{\frac 1{1-Q}}\bigg|\le C
\end{equation}
for all $x,y\in \Omega\setminus\{x_0\}$, such that $r=d(x_0,x)$ and $R=d(x_0,y)$. 

\end{cor}

Next,  we can complete the proof of Theorem \ref{thm1}, the uniqueness of $Q$-harmonic Green function on $\Om$ with singularity at $x_0\in \Omega$ . 
\begin{proof}
We argue by contradiction. Assume there exists $z_0\in \Omega$ such that $u(z_0)>v(z_0)$ and a constant $\lambda\in (0,1)$ such that $\lambda u(z_0)> v(z_0)$. Consider the set $E=\{x\in \Omega: \lambda u(x)> v(x)\}$. We claim that $x_0\in \partial E_{z_0}$, where $E_{z_0}$ is the connected component of $z_0$ in $E$. If not, then $u$ is $Q$-harmonic on $E_{z_0}$ and $\lambda u=v$ on $\partial E_{z_0}$. Comparison principle implies that $\lambda u(x)=v(x)$ for all $x\in E_{z_0}$. This contradicts the fact that $\lambda u(z_0)> v(z_0)$ and $z_0\in E_{z_0}$. Then we have
\[
\liminf_{x\to x_0} \frac{v(x)}{u(x)}\le \lambda<1.
\]
On the other hand, Lemma \ref{bound} implies that 
\[
\lim_{x\to x_0}\frac{v(x)}{u(x)}=1,
\]
yielding a contradiction.
\end{proof}

%\section{Existence of Green function}

\section{Globally defined Green functions}
We assume that $(X,d,\mu)$ is  an unbounded  $Q$-Ahlfors regular PI-space and fix a measurable differentiable structure. We will prove existence of global Green functions and obtain the uniqueness for $Q>1$ based on the following definition.

\begin{defn}\label{global-Green}
%Let $(X, d, \mu)$ be a complete $Q$-Ahlfors regular measure space supporting a $Q$-Poincar\'e inequality for $Q>1$. 
Let $x_0\in X$ and $Q>1$. A continuous real-valued function $u\: X\ra \R$ is called a {\em global $Q$-harmonic Green function with singularity at $x_0\in X$} if the following conditions are satisfied:
\begin{enumerate}[label=(\roman*), font=\upshape]

\smallskip
\item $u$ is $Q$-harmonic in $X\setminus \{x_0\}$. %with respect to a fixed Cheeger differentiability chart.

\smallskip
\item $\displaystyle\lim_{d(x, x_0)\to 0} u(x)=+\infty$.

\smallskip
\item $\displaystyle\lim_{d(x,x_0)\to \infty} u(x)=-\infty$.

\smallskip
\item $\C(\{u\ge \beta\}, \{u>\alpha\})=(\beta-\alpha)^{1-Q}$, for all $-\infty<\alpha<\beta<\infty$.

\end{enumerate}
\end{defn}

\subsection{Existence}
\begin{thm}\label{global existence}
Let $(X, d, \mu)$ be an unbounded, complete $Q$-Ahlfors regular metric measure space supporting a $Q$-Poincar\'e inequality for $Q>1$. Let $x_0\in X$. Then there exists a global $Q$-harmonic Green function with singularity at $x_0$ in $X$.
\end{thm}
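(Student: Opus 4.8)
The plan is to build the global Green function as a locally uniform limit of the relatively compact Green functions produced by Theorem~\ref{domain}, suitably normalized. Fix $x_0\in X$ and fix a reference radius $\rho_0>0$. For each $j\in\N$ let $\Omega_j=B(x_0,R_j)$ with $R_j\to\infty$, and let $g_j$ be the $Q$-harmonic Green function on $\Omega_j$ with singularity at $x_0$ furnished by Theorem~\ref{domain} (its existence requires $\capa_Q(X\setminus\Omega_j)>0$, which holds since $X$ is $Q$-Ahlfors regular and hence $Q$-hyperbolic: any ball complement is non-degenerate and by Lemma~\ref{cap lower bound}, or by $Q$-Ahlfors regularity directly, has positive $Q$-capacity). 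By property (iv) in Definition~\ref{bd def} and Lemma~\ref{cap}, $g_j$ is not bounded above near $x_0$; and since the space is $Q$-parabolic (this is part (i) of the second theorem and follows from $\phi(t)\asymp\log(t)^{1-Q}\to0$ as $t\to\infty$ in Lemma~\ref{cap}, so $\capa_Q(\overline B_{\rho_0},B_{R_j})\to 0$), we have $\sup_{\Sigma_{\rho_0}}g_j\to\infty$ as $j\to\infty$ — this is why a simple limit of the $g_j$ would diverge, so a normalization is needed. I would instead renormalize to fix the value on the sphere of radius $\rho_0$: set
\[
u_j \coloneqq g_j - c_j,\qquad c_j\coloneqq \max_{\Sigma_{\rho_0}} g_j .
\]
Then $u_j$ is $Q$-harmonic in $\Omega_j\setminus\{x_0\}$, still blows up to $+\infty$ at $x_0$, and satisfies $\max_{\Sigma_{\rho_0}}u_j=0$.

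The key step is a uniform local two-sided bound for the $u_j$. Using the capacity-asymptotics of Lemma~\ref{cap} together with the level-set capacity identity (iv) — exactly as in the proof of Lemma~\ref{bound}, where one compares $\capa_Q(\{u_j\ge\alpha\},\{u_j>\beta\})=(\alpha-\beta)^{1-Q}$ with the ring capacities $\capa_Q(\overline B_r,B_{\rho_0})$ and $\capa_Q(\overline B_{\rho_0},B_R)$ — one obtains, for $r=d(x,x_0)$,
\[
u_j(x) = \pm\Bigl(\capa_Q(\overline B_r,B_{\rho_0})\Bigr)^{\frac{1}{1-Q}} + O(1)
\]
with $O(1)$ depending only on the doubling and Poincar\'e data (not on $j$), where the sign is $+$ for $r<\rho_0$ and $-$ for $r>\rho_0$; and the width bound $M(r)-m(r)\le C$ on each sphere is proved verbatim as the Claim in Lemma~\ref{bound}, using Lemma~\ref{cap lower bound}. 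By Lemma~\ref{cap}, $\bigl(\capa_Q(\overline B_r,B_{\rho_0})\bigr)^{1/(1-Q)}\asymp \log(\rho_0/r)$ as $r\to0$ and $\asymp \log(r/\rho_0)$ as $r\to\infty$, so the $u_j$ are uniformly bounded on each compact subset of $X\setminus\{x_0\}$, with bounds independent of $j$. Local $\alpha$-H\"older regularity of $Q$-harmonic functions (the Corollary after the Kinnunen--Shanmugalingam Harnack inequality, \cite{KS01}) then gives equicontinuity on compact subsets of $X\setminus\{x_0\}$. By Arzel\`a--Ascoli and a diagonal argument over an exhaustion of $X\setminus\{x_0\}$ by compact sets, a subsequence of $\{u_j\}$ converges locally uniformly on $X\setminus\{x_0\}$ to a continuous function $u$, which is $Q$-harmonic in $X\setminus\{x_0\}$ by the stability proposition (convergence of $Q$-harmonic functions under local uniform limits). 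This verifies (i) of Definition~\ref{global-Green}.

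It remains to verify (ii), (iii), (iv). For (ii) and (iii), the uniform asymptotic $u(x)=\pm(\capa_Q(\overline B_r,B_{\rho_0}))^{1/(1-Q)}+O(1)$ passes to the limit, and by Lemma~\ref{cap} the right-hand side tends to $+\infty$ as $d(x,x_0)\to0$ and to $-\infty$ as $d(x,x_0)\to\infty$. For (iv), I would argue as in Proposition~\ref{normalization} and Lemma~\ref{cap01}: for $-\infty<\alpha<\beta<\infty$, the function $w=\min\{\max\{(u-\alpha)/(\beta-\alpha),0\},1\}$ is (after checking $w\in N^{1,Q}(X)$, which uses the two-sided growth bounds and $Q$-Ahlfors regularity to control the capacities of $\{u\ge\beta\}$ and $X\setminus\{u>\alpha\}$) the $Q$-potential of the condenser $(\{u\ge\beta\},\{u>\alpha\})$, and the chain rule together with the corresponding identity for a normalized Green function on a large ball — passed to the limit — yields $\C(\{u\ge\beta\},\{u>\alpha\})=(\beta-\alpha)^{1-Q}$. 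Concretely, one can first establish (iv) for the $u_j$ on $\Omega_j$ via Lemma~\ref{cap01} (shifting the additive normalization does not affect differences of level values), then pass the identity to the limit using that the relevant sublevel/superlevel sets of $u_j$ converge to those of $u$ and the capacities are continuous under the monotone exhaustion (Theorem on relative capacity, parts (iii)--(v)).

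The main obstacle I anticipate is the passage to the limit in (iv): one must show that the condenser capacities $\C(\{u_j\ge\beta\},\{u_j>\alpha\})$ converge to $\C(\{u\ge\beta\},\{u>\alpha\})$, which requires care because the sets are defined by strict/non-strict inequalities on a sequence of functions converging only locally uniformly, and because the ambient domain is changing with $j$; handling this cleanly likely means proving (iv) directly for the limit $u$ using a well-chosen admissible test function as in Lemma~\ref{cap01}, rather than trying to commute capacity with the limit. A secondary technical point is verifying that the truncation $w$ above genuinely lies in $N^{1,Q}(X)$ (equivalently, that $\{u\ge\beta\}$ has finite $Q$-capacity and $X\setminus\{u>\alpha\}$ is "large enough"), which is where $Q$-Ahlfors regularity and the logarithmic growth bounds from Lemma~\ref{cap} are essential.
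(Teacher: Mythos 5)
Your strategy runs parallel to the paper's: the paper also builds the global Green function as a locally uniform limit of normalized solutions on an exhaustion, gets uniform local bounds from ring-capacity estimates plus the Harnack inequality, verifies (ii)--(iii) from the capacity asymptotics, and only then fixes condition (iv). The difference is in the approximants: the paper uses the $Q$-potentials $u_i$ of the annular condensers $(\overline{B}(x_0,2^{-i}),B(x_0,2^{i}))$ with the affine normalization $v_i=(u_i-m_i)/(M_i-m_i)$ over a fixed reference annulus, whereas you start from the already multiplicatively normalized Green functions $g_j$ on balls and subtract a constant. Your choice has the appeal that the approximants already satisfy (iv) for all levels above $-c_j$, but it creates the two gaps below, both of which the paper's route avoids.

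First, you invoke Lemma~\ref{bound} for $\Omega_j=B(x_0,R_j)$, but that lemma assumes $\partial\Omega$ is regular for the Dirichlet problem: regularity is what forces components of $\{g_j>M(r)\}$ away from $\partial\Omega_j$ so that they must contain $x_0$ in their boundary. Balls in a PI space need not have regular boundary. The inclusions $\{g_j>M(r)\}\subset B_r\subset\{g_j\ge m(r)\}$ can still be derived from the weak comparison principle using only $g_j\ge 0$ and $g_j=0$ $Q$-q.e.\ outside $\Omega_j$ (this is essentially how the paper handles its potentials, which satisfy no regularity hypothesis), but that argument must be supplied; as written the hypothesis of the lemma you cite is not verified. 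Second, and more substantively, your treatment of (iv) does not close. Passing $\capa_Q(\{u_j\ge\beta\},\{u_j>\alpha\})=(\beta-\alpha)^{1-Q}$ through a merely locally uniform limit would require convergence of the energies $\int_{\{\alpha<u_j<\beta\}}|Du_j|^{Q}$, which locally uniform convergence does not give; and proving (iv) ``directly'' via the truncated potential $w$ only yields the identity up to a multiplicative constant depending on $u$, since nothing pins down the capacity of a single level-set ring of the limit. The paper resolves exactly this with the global Proposition~\ref{normalization}: any continuous function satisfying (i),(ii),(iii) becomes a global Green function after multiplication by a suitable $\lambda>0$. You should establish (i)--(iii) for your limit and then cite that proposition, rather than attempting to recover (iv) for the limit itself.
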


We first show that if we can find a function satisfying conditions (i), (ii), (iii) of Definition \ref{global-Green}, then we can always normalize the function to obtain condition (iv). The proof is similar to that of Lemma \ref{cap01}.

\begin{prop}[Normalization]\label{normalization}
Under the assumptions Theorem \ref{global existence}, suppose that there exists a continuous function  $g\:X\to \mathbb{R}$ satisfying  \textnormal{(i)}, \textnormal{(ii)},
\textnormal{(iii)} in Definition~\ref{global-Green}. Then there exists $\lambda>0$ such that $\lambda g$ is a global $Q$-harmonic Green function. 
\end{prop}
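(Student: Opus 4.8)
The plan is as follows. Since multiplication by a positive constant preserves properties (i), (ii), and (iii) of Definition~\ref{global-Green} (here one uses $\lambda\cdot(+\infty)=+\infty$ and $\lambda\cdot(-\infty)=-\infty$ for $\lambda>0$), it suffices to find $\lambda>0$ for which $u=\lambda\tilde u$ satisfies (iv). Introduce, for $-\infty<\alpha<\beta<\infty$, the quantity
\[
F(\alpha,\beta):=\C\bigl(\{\tilde u\ge\beta\},\{\tilde u>\alpha\}\bigr)\,(\beta-\alpha)^{Q-1}.
\]
If we show that $F$ is a finite positive constant $c$, we are done: taking $\lambda:=c^{-1/(Q-1)}>0$, and noting $\{u\ge\beta\}=\{\tilde u\ge\beta/\lambda\}$ and $\{u>\alpha\}=\{\tilde u>\alpha/\lambda\}$, we get
\[
\C\bigl(\{u\ge\beta\},\{u>\alpha\}\bigr)=\frac{F(\alpha/\lambda,\beta/\lambda)}{(\beta/\lambda-\alpha/\lambda)^{Q-1}}=\frac{c\,\lambda^{Q-1}}{(\beta-\alpha)^{Q-1}}=(\beta-\alpha)^{1-Q},
\]
since $c\lambda^{Q-1}=1$; this is exactly (iv). So the whole proof reduces to the claim that $F\equiv c$ for some $c\in(0,\infty)$.

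To prove the claim I would apply Lemma~\ref{cap01}. Fix reals $b_0<b_1$. By (iii) the open set $\Omega_0:=\{\tilde u>b_0\}$ is bounded, hence relatively compact (the space being complete and doubling is proper); by (ii) the set $\hat K:=\overline{\{\tilde u\ge b_1\}}$ is compact and $\hat K\subset\Omega_0$. On the ``ring'' $\Omega_0\setminus\hat K=\{b_0<\tilde u<b_1\}$ the function
\[
\psi:=\min\Bigl\{\max\Bigl\{\tfrac{\tilde u-b_0}{b_1-b_0},\,0\Bigr\},\,1\Bigr\}
\]
coincides with the affine image $\tfrac{\tilde u-b_0}{b_1-b_0}$ of $\tilde u$, hence is $Q$-harmonic there; moreover $\psi\equiv1$ on $\hat K$ (in particular in a punctured neighborhood of $x_0$, so $\psi$ is continuous on $X$ with $\psi(x_0)=1$) and $\psi\equiv0$ off $\Omega_0$. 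Thus $\psi$ is the $Q$-potential of the condenser $(\hat K,\Omega_0)$ and is continuous in $\Omega_0$, so Lemma~\ref{cap01} applies. For $0\le\alpha'<\beta'\le1$, writing $a=b_0+\alpha'(b_1-b_0)$ and $b=b_0+\beta'(b_1-b_0)$, one checks $\Omega_0\cap\{\psi\ge\beta'\}=\{\tilde u\ge b\}$ and $\Omega_0\cap\{\psi>\alpha'\}=\{\tilde u>a\}$ up to the point $x_0$, which is irrelevant because $\{x_0\}$ has vanishing $Q$-capacity ($X$ is $Q$-Ahlfors regular with $Q>1$). Hence Lemma~\ref{cap01} gives
\[
\C\bigl(\{\tilde u\ge b\},\{\tilde u>a\}\bigr)=\frac{\C(\hat K,\Omega_0)}{\bigl((b-a)/(b_1-b_0)\bigr)^{Q-1}},
\]
that is, $F(a,b)=\C(\hat K,\Omega_0)(b_1-b_0)^{Q-1}$ for all $b_0\le a<b\le b_1$. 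Since any real number is an admissible threshold for $\tilde u$ by (ii)--(iii), and any two pairs $(\alpha,\beta)$ lie in a common interval $[b_0,b_1]$, this shows $F$ is constant; denote the value by $c$. It is finite since $c\le\C(\hat K,\Omega_0)(b_1-b_0)^{Q-1}<\infty$ (as $\psi\in N^{1,Q}(X)$ has finite $Q$-energy). It is positive by the monotonicity properties of the relative capacity combined with the lower bound of Theorem~\ref{metric ring capacity estimates}(i) (or Lemma~\ref{cap lower bound}): picking $\rho>0$ small with $\overline{B}(x_0,\rho)\subset\hat K$ and $R>0$ with $\Omega_0\subset B(x_0,R)$, one has $\C(\hat K,\Omega_0)\ge\C(\overline{B}(x_0,\rho),B(x_0,R))>0$.

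Once $F\equiv c\in(0,\infty)$ is in hand, the computation of the first paragraph finishes the proof, with $\lambda=c^{-1/(Q-1)}$. I expect the main obstacle to be the middle paragraph: one must check carefully that the clamped, rescaled function $\psi$ really is the $Q$-potential of $(\hat K,\Omega_0)$ so that Lemma~\ref{cap01} is applicable --- continuity (including at the pole), compactness of $\hat K\subset\Omega_0$, and the precise identification of the super- and sub-level sets of $\psi$ with those of $\tilde u$ --- and one must establish $c>0$, for which the metric-ring capacity estimates are the essential input. The remaining ingredients (the scaling identity and the invariance of properties (i)--(iii) under positive scaling) are routine.
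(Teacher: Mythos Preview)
Your argument is correct and rests on the same tool as the paper's proof: applying Lemma~\ref{cap01} to clamped affine rescalings of $\tilde u$ (your $\psi$, the paper's $\frac{u-a}{t-a}$, etc.) to obtain the capacity identity on level sets. The paper fixes $\lambda$ first via $t^{1-Q}=\C(\{\tilde u\ge s\},\{\tilde u>0\})$ and then verifies condition~(iv) by a three-case split according to the signs of $a$ and $b$; your formulation via the constancy of $F(\alpha,\beta)=(\beta-\alpha)^{Q-1}\C(\{\tilde u\ge\beta\},\{\tilde u>\alpha\})$ is a cleaner packaging that handles all thresholds uniformly and, in addition, makes explicit why the scaling constant lies in $(0,\infty)$ --- a point the paper leaves implicit. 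One cosmetic slip: as written, $x_0\notin\Omega_0=\{\tilde u>b_0\}$ but $x_0\in\hat K$, so $\hat K\not\subset\Omega_0$; this is fixed by adjoining $x_0$ to $\Omega_0$ (which remains open since $\tilde u\to\infty$ there), and you essentially acknowledge this when you note that $\{x_0\}$ has vanishing $Q$-capacity.
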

\begin{proof}

It suffices to show that  $(\beta-\alpha)^{Q-1} \Capab $
%\{\alpha <g < \beta\} $
is a constant $\lambda^{1-Q}$ independent of the interval $(\alpha,\beta)$, and  depending only on $g$ and on the structure constants,  i.e., 
$$ \Capab=
\frac{\lambda^{1-Q}} {(\beta-\alpha)^{Q-1}} $$
for all $\alpha<\beta$.
Note that $u=\min \{(g-\alpha)_+/(\beta-\alpha), 1\}$ is a $Q$-potential for the condenser
$\{u\ge \beta\}, \{u>\alpha\})$, and so
%$\{\alpha <g < \beta\}$, and so 
$$  (\beta-\alpha)^{Q-1} \Capab=
\frac{1} {\beta-\alpha}\int_{\{\alpha <g < \beta\}}|D g|^Q.
$$

Consider $\alpha <\alpha'<\beta'< \beta$, and let 
$v=\max\{ (g-\alpha')_+/(\beta'-\alpha'), 1\}$. Observing that $u-v=0$ outside $\{\alpha <g < \beta\}$, and that 
$g$ is $Q$-harmonic on $\{\alpha <g < \beta\}$ one has
\begin{align*}0&=\int_{\{\alpha <g < \beta\}} |D g|^{Q-2} D g \cdot D (u-v)\\
&=\frac 1{\beta-\alpha}\int_{\{\alpha <g < \beta\}}|D g|^Q-\frac 1{\beta'-\alpha'}\int_{\{\alpha' <g < \beta'\}}|D g|^Q\\
&=(\beta-\alpha)^{Q-1}\Capab-(\beta'-\alpha')^{Q-1}\Capabp.
\end{align*}
The statement follows. 
\end{proof}

The strategy of the proof for existence of a global Green function is as follows: With the existence of $p$-potential functions guaranteed by \cite[Lemma 3.3]{HS02}, we construct a sequence of $Q$-potential functions $\{u_i\}$ on the sequence of nested rings  $B(x_0, 2^i) \setminus B(x_0, 2^{-i})$  that exhaust $X\setminus \{x_0\}$ as $i\to \infty$.  Next, we wish to invoke the Arzela-Ascoli theorem to obtain a sequence of $Q$-harmonic functions that converges locally uniformly on annuli. %The difficulty rests in finding  uniform bounds,  especially  lower bounds.     In order to get a normalized sequence $\{v_i\}$ which is uniformly bounded,  we introduce a new normalization and apply the Harnack inequality to establish the desired  uniform bounds.  

\begin{proof}[Proof of Theorem \ref{global existence}]
Let $D_i=B(x_0, 2^i)$ and $B_i=B(x_0, 2^{-i})$ for $i\ge 2$. Let $\tilde{u}_i$ be the $Q$-potential of $(\overline{B_i}, D_i)$.

We next construct a sequence of functions $\{v_i\}$ such that 

\begin{itemize}
\smallskip
\item[(i)] $v_i$ is a $Q$-harmonic function on $D_i\setminus \overline{B_i}$,

\smallskip
\item[(ii)] For any fixed $i_0\ge 2$, $v_i$ is locally uniformly bounded on $\overline{D_{i_0}}\setminus B_{i_0}$ for sufficiently large $i> i_0$.
\end{itemize}

By Lemma \ref{cap01}, for all $0<\alpha<\beta< 1$,

\[
\capa_Q (\Omega\cap  \{u\ge \beta\}, \Omega\cap  
 \{u>\alpha\})=(\beta-\alpha)^{1-Q} \capa_Q (K,\Omega),
\]
where $u$ is a $Q$-potential of $(K,\Omega)$. Hence, similar to the proof of Proposition \ref{normalization}, we can normalize each $\tilde{u}_i$ by multiplying a constant $\lambda_i=\capa_Q (\overline{B}_i,D_i)^{\frac{1}{1-Q}}$ such that
\begin{equation}\label{norm}
     \capa_Q (D_i\cap  \{u_i\ge \beta\}, D_i\cap  
 \{u_i>\alpha\})=(\beta-\alpha)^{1-Q},
\end{equation}
where $u_i=\lambda_i \tilde{u}_i$.

 Next, apply the argument similar in Lemma \ref{bound} to each $u_i$ on the open set $D_i\setminus \overline{B_i}$. 
For each $i_0\in \mathbb{N}$ and $i>i_0$ sufficiently large. Consider $2^{-i_0}\le r<R\le 2^{i_0}$ and 
$$m(r)=\min\{u_i(x):x\in \Sigma_r\}, \quad   
M(r)=\max\{u_i(x): x\in \Sigma_r\} ,$$
where $\Sigma_r\coloneqq \{x\in X: d(x,x_0)=r\}$. Lemma \ref{lem:nococo}  applies to $u_i$ and for $\alpha\in (0,1)$, we have $\{u_i>\alpha\}$ and $\{u_i<\alpha\}$ have no non-empty components that are compactly contained in $D_i\setminus \overline{B_i}$. In particular, any connected component of $\{u_i>M(r)\}\subset B_r$ contains points in $\partial B_i$ in its boundary, any connected component of $\{u_i<m(r)\}$ contains points in $\partial D_i$ in its boundary, and
\[
\{u_i>M(r)\}\subset B_r\subset \overline{B}_r\subset \{u_i\ge m(r)\}.
\]

Following the same argument as in Lemma \ref{bound}, we obtain the same estimates \eqref{eq:uppbdd} and \eqref{eq:uppbdd2} for $r<R$ that
\[
M(r)\ge m(R)+\capa_Q(\overline{B}_r, B_R)^{\frac{1}{1-Q}}
\]
and
\[
m(r)\le M(R)+\capa_Q(\overline{B}_r, B_R)^{\frac{1}{1-Q}}.
\]
We next show that there exists a constant $C>0$ such that $M(r)-m(r)\le C$ for $u_i$ with $i$ sufficiently large on $D_{i_0}\setminus \overline{B}_{i_0}$ and $C$ depending on the structure constants of $X$ only.

Assume $M(r)>m(r)$ and let $\varepsilon>0$ be such that $M(r)-\varepsilon>m(r)+\varepsilon$. There exists $y_1,y_2\in \Sigma_r$ such that $u(y_1)=M(r)$ and $u(y_2)=m(r)$. Let $U$ be a connected component of $\{u>M(r)-\varepsilon\}$ containing $y_1$ and $V$ be a connected component of $\{u<m(r)+\varepsilon\}$ containing $y_2$. Hence,

\[
{\rm diam}(U)\ge r-2^{-i}\ge \frac{r}{2}\quad\quad \text{and}\quad\quad {\rm diam}(V)\ge 2^{i}-r\ge 2r,
\]
for $i>i_0$ sufficiently large and $r\in [2^{-i_0}, 2^{i_0}]$.

Let $E=\{u>M(r)-\varepsilon\}$ and $F=\{u<m(r)+\varepsilon\}$. Then ${\rm dist}(E,F)\le d(y_1,y_2)\le 2r$ and
\[
{\rm diam}(E)\wedge {\rm diam}(F)\ge {\rm diam}(U)\wedge {\rm diam}(V)\ge \frac{r}{2}.
\]
Lemma \ref{lem:cap lower bound} implies that $\capa_Q(E,F;D_i)\ge C$, where $C$ is determined by the structure condition of the PI space $X$ and independent of $u_i$. 

Furthermore, the normalization \eqref{norm} implies that
\[
\capa_Q(E,F;D_i)= \capa_Q(\{u>M(r)-\varepsilon\},\{u<m(r)+\varepsilon\};D_i)=(M(r)-m(r)-2\varepsilon)^{1-Q}.
\]
Letting $\varepsilon\to 0$ yields the claim that $M(r)-m(r)\le C$ for $u_i$ with $i$ sufficiently large for $r$ in $D_{i_0}\setminus B_{i_0}$ and $C$ depending on the structure constants of $X$ only.

For each $u_i$, we take a point $z_i\in \Sigma_1$ and define $v_i=u_i-u_i(z_i)$. Then $v_i(z_i)=0$ and we claim that $v_i$ is uniformly bounded on $D_{i_0}\setminus \overline{B}_{i_0}$ for each $i_0$ and $i>i_0$ sufficiently large.

For $y\in D_{i_0}\setminus \overline{B}_{i_0}$ and $d(x_0,y)>1$. Let $R=d(x_0,y)$ and $y\in \Sigma_R$ with $R\in (1, 2^{i_0}]$. Let $r=1$, $r<R\le 2^{i_0}$ and $x\in \Sigma_r, y\in \Sigma_R.$ The oscillation bound of $u_i$ on $\Sigma_r$ and $\Sigma_R$ together with  \eqref{eq:uppbdd} and  \eqref{eq:uppbdd2} imply that
\[
\begin{aligned}
    u_i(y)\le M(R)&\le m(R)+C\le M(r)-\capa_Q(\overline{B}_r, B_R)^{\frac{1}{1-Q}}+C\\
    &\le m(r)-\capa_Q(\overline{B}_r, B_R)^{\frac{1}{1-Q}}+2C\\
    &\le u_i(x)-\capa_Q(\overline{B}_r, B_R)^{\frac{1}{1-Q}}+2C,
\end{aligned}
\]
and similarily,
\[
\begin{aligned}
    u_i(y)\ge  u_i(x)-\capa_Q(\overline{B}_r, B_R)^{\frac{1}{1-Q}}-2C.
\end{aligned}
\]
Let $x=z_i$ and it follows that 
\[
-2C-\capa_Q(\overline{B}_r, B_R)^{\frac{1}{1-Q}}\le v_i(y) \le 2C-\capa_Q(\overline{B}_r, B_R)^{\frac{1}{1-Q}}
\]
for any $y\in \Sigma_R$ with $R\in (1, 2^{i_0}]$. Recall $B_{i_0}=B(x_0, 2^{-i_0})$, $r=1>2^{-i_0}$ and $D_{i_0}=2^{i_0}$. From the above estimates, it is easy to see that for any $y\in D_{i_0}\setminus \overline{B}_{1}$,
\[
-2C-\capa_Q(\overline{B}_{i_0}, D_{i_0})^{\frac{1}{1-Q}}\le v_i(y) \le 2C.
\]
Likewise, when $y\in D_{i_0}\setminus \overline{B}_{i_0}$ satisfying $d(x_0,y)<1$, we let $R=1$ and $r=d(x_0,y)$ and apply similar argument to get a uniform bound of $v_i(y)$ for $y\in \Sigma_r$ that
\[
-2C\le v_i(y)\le 2C+\capa_Q(\overline{B}_{i_0}, D_{i_0})^{\frac{1}{1-Q}}.
\]

In summary, we obtain a sequence $v_i$ which is equicontinuous and uniformly bounded on $D_{i_0}\setminus \overline{B}_{i_0}$ for each $i_0$ for $i>i_0$ sufficiently large. Let $i_0\to \infty$, Arzela-Ascoli Theorem and a diagonal argument extracts a subsequence such that converges locally uniformly to a $Q$-harmonic function $u$ on $X\setminus\{x_0\}$. 

The space $X$ is $Q$-parabolic follows from a direct computation. Since $X$ is a $Q$-Ahlfors regular space, the following capacity estimate of rings yields
\[
\capa_Q(B_r, B_{2r})\le C\mu(B_r) r^{-Q}\le C.
\]

For $i> i_0$ sufficiently large, we apply \cite[Theorem 2.6]{HKM06} and get
\[
\begin{aligned}
{\capa_Q(\overline{D}_{i_0}, D_i)}^{\frac{1}{1-Q}}&\ge \sum_{j=i_0}^{i-1}{\capa_Q (\overline{B}(x_0, 2^j), B(x_0, 2^{j+1}))}^{\frac{1}{1-Q}}\ge \sum_{j=i_0}^{i-1} C=C(i-i_0).
\end{aligned}
\]
It follows that 
\[
{\capa_Q(\overline{D}_{i_0}, D_i)}\le \frac{C}{(i-i_0)^{Q-1}}.
\]
Let $i\to \infty$, we obtain the space is $Q$-parabolic. 

We then verify that the $Q$-harmonic function on $X\setminus \{x_0\}$ obtained above satisfies condition (3) and condition $(2)$. 
According to the characterization in \cite[Theorem 9.22]{HKM06}, the $Q$-harmonic function obtained above is unbounded from below. That is, $\inf u=-\infty$. Since $u$ is the locally uniform limit of the sequence $\{v_i\}$ and $\{v_i\}$ are bounded from below on $D_{i_0}\setminus \overline{B}_{i_0}$, the infimum of $u$ can only be achieved as $d(x, x_0)\to \infty$. Next, we show that 
\[
\lim_{d(x,x_0)\to \infty} u(x)=-\infty.
\]

For $i$ sufficiently large, the locally uniform convergence of $v_i$ to $u$ and locally uniform boundedness of $v_i$ imply that the oscillation of $u$ on is also locally uniformly bounded. There exists a sequence $x_j\to \infty$ such that $u(x_j)\to -\infty$. It follows that $u(x)\to -\infty$ for $d(x,x_0) \to\infty $. That is, $\displaystyle\lim_{d(x,x_0)\to \infty} u(x)=-\infty.$

On the other hand, %$u$ is bounded from below on $B(x_0, r)\setminus \{x_0\}$ for $r$ sufficiently large. 
if $\sup u<\infty$ in $B(x_0, r)\setminus\{x_0\}$, then $u$ can be extended to a non-constant $Q$-harmonic function on $B(x_0, r)$ and achieves a maximum at an interior point, which is contradiction to the maximum principle. Thus, $\sup u=\infty$ on $X\setminus \{x_0\}$. Furthermore, $u$ is the locally uniform limit of the sequence $\{v_i\}$ and $\{v_i\}$ are uniformly bounded from above on $D_{i_0}\setminus \overline{B}_{i_0}$ so we can only achieve $\sup u$ as $x\to x_0$. A similar argument as in the case of $x\to \infty$ yields that
\[
\lim_{d(x, x_0)\to 0} u(x)=\infty.
\]

To conclude the proof of existence for a global Green function  we invoke Proposition \ref{normalization}.
\end{proof}

\subsection{Uniqueness}

We show that a global $Q$-harmonic Green function is unique in $X$ for $1<Q<\infty$.

\begin{thm}\label{thm2}
Let $(X, d, \mu)$ be an unbounded, complete $Q$-Ahlfors regular measure space supporting a $Q$-Poincar\'e inequality for $Q >1$. Let $x_0\in X$.
 Let $u, v$ be two global $Q$-harmonic Green functions with singularity at $x_0$ in $X$. If $u,v$ agree at a point $y_0\in X{\setminus\{x_0\}}$, then  $u=v$ on $X\setminus\{x_0\}$.
\end{thm}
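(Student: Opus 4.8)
The plan is to reduce Theorem~\ref{thm2} to a global version of Lemma~\ref{bound}, namely $|u-v|\le C$ on $X\setminus\{x_0\}$, and then to run a contradiction argument patterned on the proof of Theorem~\ref{thm1}. Two features make the global case genuinely harder than the relatively compact one: the ``boundary'' of the domain $X\setminus\{x_0\}$ consists of the two ends $x_0$ and $\infty$, at both of which $u$ and $v$ blow up; and, unlike in the compact setting, property~(iv) of Definition~\ref{global-Green} pins down only the multiplicative normalization of a global Green function, not the additive one (if $u$ is a global Green function, so is $u+c$), which is precisely why the hypothesis $u(y_0)=v(y_0)$ is needed and must actually be used.

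\emph{Step 1: $|u-v|\le C$.} Put $B_r=B(x_0,r)$, $\Sigma_r=\{x:d(x,x_0)=r\}$, $m(r)=\min_{\Sigma_r}u$, $M(r)=\max_{\Sigma_r}u$. Applying Lemma~\ref{lem:nococo} with $\Omega=X\setminus\{x_0\}$, together with $u\to+\infty$ at $x_0$ and $u\to-\infty$ at infinity, one shows as in the proof of Lemma~\ref{bound} that the superlevel sets of $u$ in a suitable range are sandwiched between concentric balls $\overline B_r$ and $\overline B_{r_0}$, with $x_0$ or a neighbourhood of infinity lying on the relevant boundary. Inserting these inclusions into property~(iv), and using the two-sided capacity estimates for metric rings (the lower bound~\eqref{cap lower bound}, Lemma~\ref{cap}, Theorem~\ref{metric ring capacity estimates}) and the Harnack inequality on spheres, gives first $M(r)-m(r)\le C$ for all $r>0$ and then that, with $r_0$ fixed once and for all, $u(x)=\capa_Q(\overline B_{d(x,x_0)},B_{r_0})^{1/(1-Q)}+O(1)$ for $d(x,x_0)<r_0$ and $u(x)=-\capa_Q(\overline B_{r_0},B_{d(x,x_0)})^{1/(1-Q)}+O(1)$ for $d(x,x_0)>r_0$, with $O(1)$ uniform. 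The identical estimates hold for $v$ with the same principal term, so $|u-v|$ is bounded near $x_0$ and near infinity; on the remaining compact annulus $u-v$ is continuous. Hence $|u-v|\le C$ on $X\setminus\{x_0\}$, and in particular $v(x)/u(x)\to1$ both as $x\to x_0$ and as $d(x,x_0)\to\infty$.

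\emph{Step 2: the contradiction argument.} Suppose $u(y_0)=v(y_0)$ but $u\not\equiv v$; after exchanging $u$ and $v$ we may assume $\{u>v\}\ne\emptyset$, pick $z_0$ there, and then choose $\lambda\in(0,1)$ with $\lambda u(z_0)>v(z_0)$ (possible since $u(z_0)>v(z_0)$, in all three sign cases for $u(z_0)$). Let $E=\{x\in X\setminus\{x_0\}:\lambda u(x)>v(x)\}$ and $E_{z_0}$ its component containing $z_0$. By Step~1, $\lambda u-v=(\lambda-1)u+(u-v)\to-\infty$ near $x_0$, so $\overline{E_{z_0}}$ misses a punctured neighbourhood of $x_0$. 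If $E_{z_0}$ is bounded, it is compactly contained in $X\setminus\{x_0\}$; since $\lambda u$ and $v$ are $Q$-harmonic there ($Q$-harmonicity being preserved under positive scaling) and $\lambda u=v$ on $\partial E_{z_0}\subset\{\lambda u=v\}$, the comparison principle forces $\lambda u\le v$ on $E_{z_0}$, contradicting $z_0\in E_{z_0}$. If $x_0\in\partial E_{z_0}$, choosing $x_k\in E_{z_0}$ with $x_k\to x_0$ gives $\lambda u(x_k)>v(x_k)$ with $u(x_k)>0$, hence $\liminf_{x\to x_0}v(x)/u(x)\le\lambda<1$, contradicting Step~1. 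The remaining possibility is that $E_{z_0}$ is unbounded with $x_0\notin\overline{E_{z_0}}$; this configuration is realized, e.g., by the pair $u,\ u+c$ with $c<0$, so it cannot be excluded by the geometry of level sets alone, and here is exactly where the normalization $u(y_0)=v(y_0)$ must be brought in. One rules it out by combining this normalization with a strong comparison / strong maximum principle for $Q$-harmonic functions on the exterior domain $E_{z_0}$ of the $Q$-parabolic space $X$ (its $Q$-parabolicity being noted in the remark after Theorem~\ref{global existence}) and the logarithmic asymptotics of Theorem~\ref{local behavior}; it is in making this quantitative that $Q\ge2$ is used. Having excluded all three cases, we conclude $u\equiv v$.

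\emph{Main obstacle.} The crux is the unbounded-component case of Step~2. In Theorem~\ref{thm1} the set $E_{z_0}$ sits inside a bounded $\Omega$ and the comparison principle finishes the argument immediately; globally one must stop $E_{z_0}$ from escaping to infinity without first reaching the pole, and the translation invariance of the class of global Green functions means this can only be done by invoking $u(y_0)=v(y_0)$ through a maximum-principle statement on exterior domains, for which the degenerate (rather than singular) range $Q\ge2$ is the convenient one. For $Q=2$ the whole theorem is easy: by Step~1 the function $u-v$ is bounded and $2$-harmonic on $X\setminus\{x_0\}$, hence extends $2$-harmonically across $x_0$ (a point of zero $2$-capacity), and $2$-parabolicity of $X$ forces $u-v$ to be the constant $u(y_0)-v(y_0)=0$.
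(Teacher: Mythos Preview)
Your Step~1 is correct and coincides with Lemma~\ref{global bound} in the paper. The trouble is Step~2: the $\lambda$-trick from the relatively compact case does \emph{not} close in the global setting, and what you offer for the unbounded component of $E_{z_0}$ is a gesture, not an argument. In fact your own computation $\lambda u-v=(\lambda-1)u+(u-v)$ shows that near infinity (where $u\to-\infty$) one has $\lambda u-v\to+\infty$, so $E=\{\lambda u>v\}$ \emph{always} contains a neighbourhood of infinity and $E_{z_0}$ is automatically unbounded; the first two cases of your trichotomy are vacuous. You then write that one ``rules it out by combining this normalization with a strong comparison / strong maximum principle on the exterior domain $E_{z_0}$ \dots; it is in making this quantitative that $Q\ge 2$ is used'', but no such maximum principle is stated, no argument is given, and the hypothesis $u(y_0)=v(y_0)$ never actually enters. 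As you yourself note, the pair $(u,u+c)$ shows that the multiplicative-ratio device cannot by itself separate $u$ from a translate; something additive is needed.

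The paper abandons the $\lambda$-comparison entirely after Step~1 and proceeds additively. First it proves a global Caccioppoli-type estimate (Lemma~\ref{gradient estimates}):
\[
\int_{X\setminus\{x_0\}}|Du-Dv|^Q\, d\mu \le C,
\]
using the test function $\eta(u-v)$ with a logarithmic cut-off $\eta$ on dyadic annuli; this is exactly where $Q\ge 2$ is used, via the vector inequality $|a-b|^Q\le C\langle |a|^{Q-2}a-|b|^{Q-2}b,\,a-b\rangle$. Second, it shows that the limits $\alpha=\lim_{x\to x_0}(u-v)$ and $\beta=\lim_{d(x,x_0)\to\infty}(u-v)$ exist: if not, components of $\{u-v<a+\delta\}$ and $\{u-v>b-\delta\}$ would, by Lemma~\ref{lem:nococo}, reach $x_0$ or $\infty$ and (by the Loewner estimate Lemma~\ref{cap}) carry unbounded mutual $Q$-capacity, while the truncation $f=\min\{\max\{(u-v-a-\delta)/\delta,0\},1\}$ is admissible with $\int|Df|^Q\le\delta^{-Q}\int|Du-Dv|^Q<\infty$, a contradiction. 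Third, the capacity normalization~(iv) forces $\alpha=\beta$ (compare the level sets $\{u\ge M\}$ with $\{v\ge M-\alpha-\epsilon\}$ near $x_0$ and the analogous inclusion near infinity). Finally, comparison on large annuli gives $u-v\equiv\alpha$, and the point normalization $u(y_0)=v(y_0)$ yields $\alpha=0$. Your $Q=2$ aside (linearity $\Rightarrow$ $u-v$ harmonic, removable singularity, parabolic Liouville) is a legitimate shortcut in that case, but it does not extend to $Q>2$, and the paper's gradient-estimate route is what fills the gap you left.
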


In the following, we use the notation 
 $B_r\coloneqq B(x_0, r)$, $\overline B_r  \coloneqq  \overline B(x_0, r)$,  and $\Sigma_r\coloneqq \{x\in X: d(x,x_0)=r\}$. We also define  $$m(r)=\min\{u(x):x\in \Sigma_r\}, \quad   
M(r)=\max\{u(x): x\in \Sigma_r\} .$$

A simple modification of the proof of Lemma \ref{bound} yields the following result. 
\begin{lem}\label{global bound}
Let $(X, d, \mu)$ be a complete $Q$-Ahlfors regular measure space supporting a $Q$-Poincar\'e inequality for $Q>1$. Let $x_0\in X$ and let $u,v$ be two global $Q$-harmonic Green functions with singularity at $x_0$.  Then there exists a constant $C_0$ depending on the structure conditions and maximum of $u-v$ on a fixed sphere such that \begin{equation}\label{eq:bounded}
|u(x)-v(x)|\le C_0
\end{equation}
for all $x$ satisfying $0<d(x,x_0)<\infty$. 
\end{lem}

%We use the same notation from Lemma \ref{bound} and write $B_r\coloneqq B(x_0, r)$, $\overline B_r  \coloneqq  \overline B(x_0, r)$,  and $\Sigma_r\coloneqq \{x\in X: d(x,x_0)=r\}$. 
\begin{proof} 
 Fix $r_0>0$. Let $0<r\le r_0$. We consider the set $\{u>M(r)\}\subset X\setminus\{x_0\}$. If $U$ is a component of this set, then $U$ cannot be compactly contained in $X\setminus\{x_0\}$ by Lemma~\ref{lem:nococo}. Hence $\partial U$ meets 
$\{x_0\}\cup \{\infty\}$.  Since  $u(x)\to -\infty$ as $x\to \infty$.
Consequently $x_0\in \partial U$. Since   
$$U\cap \Sigma_r\subset \Sigma_r \cap \{u>M(r)\}=\emptyset$$ by definition of $M(r)$, it follows that 
$U\subset B_r$. We conclude  that  $\{u>M(r)\}\subset B_r$ and that 
the set $\{u>M(r)\}$ contains $x_0$ in its boundary. 

Similarly, using the fact that $u(x)\to \infty$  as $x\to x_0$, yields 
that
$\{u<m(r)\} \subset X \setminus \overline B_r$,  or equivalently $\overline B_r\sub  \{u\ge m(r)\}$,   and that the boundary of 
$\{u<m(r)\}$ contains  $\{\infty\}$.

Following the same argument in Lemma 3.4, we obtain that
 \begin{equation}\label{global eq:lowbdd}
M(r) \ge m(r_0)+ {\capa_Q(\overline B_r, B_{R})}^{\frac 1{1-Q}}.
\end{equation}

and  
 \begin{equation}\label{global eq:uppbdd}
m(r) \le M(r_0)+ {\capa_Q(\overline B_r, B_{R})}^{\frac 1{1-Q}}.
\end{equation}

\begin{comment}
In particular, we have 
\begin{equation}\label{global eq:incl}
\{u>M(r)\}\sub B_r\sub \overline{B}_r\sub \{ u\ge m(r)\}.
\end{equation}

Since $u$ is a global Green function, for all $-\infty<\beta < \alpha<\infty$ we have
\begin{equation}\label{global eq:levelsets}
\capa_Q(\{u\ge \alpha\}, \{ u>\beta\})=\frac{1}{(\alpha-\beta)^{Q-1}}.
\end{equation}
Let $\eps>0$ and apply  \eqref{global eq:incl}  with $\alpha =M(r)+\eps$ and 
$\beta=m(R)-\eps$. Note that 
$$\{u\ge M(r)+\eps\}\sub \overline B_r\sub  
B_{R}\sub \{u>m(R)-\eps\}. $$ Since this holds for all $\eps>0$, we must have $M(r)\ge m(R)$. 
It follows  that 
\begin{align*}
\capa_Q( \overline B_r, B_{R})&\ge 
\capa_Q(\{ u\ge M(r)+\eps\}, \{u> m(R)-\eps\})\\
&= \frac{1}{(M(r)-m(R)+2\eps)^{Q-1}}. 
\end{align*} 
 Rearranging this inequality and letting $\eps\to 0$,  we arrive at 
 \begin{equation}\label{global eq:lowbdd}
M(r) \ge m(R)+ {\capa_Q(\overline B_r, B_{R})}^{\frac 1{1-Q}}.
\end{equation}

Next, we establish the estimate 
 \begin{equation}\label{global eq:uppbdd}
m(r) \le M(R)+ {\capa_Q(\overline B_r, B_{R})}^{\frac 1{1-Q}}.
\end{equation}
The latter is trivial if $m(r)\le M(R)$. 
On the other hand, if $m(r) >M(R)$, then 
$$  \overline B_r\sub \{u\ge m(r)\} \sub  \{u>M(R)\} \sub B_{R}.$$ 
and it follows that 
\begin{align*}
\capa_Q( \overline B_r, B_{R})&\le 
\capa_Q(\{ u\ge m(r)\}, \{u> M(R)\})\\
&= \frac{1}{(m(r)-M(R))^{Q-1}}, 
\end{align*} 
thus implying \eqref{global eq:uppbdd}.
\end{comment}
Similar argument as in Lemma \ref{bound} implies there exists a constant $C>0$ depending only on the structure conditions of $X$, such that 
$M(r)-m(r)\le C$ for all $0<r<\infty$. 
%similar proof as Lemma 3.4
\begin{comment}
We fix $r>0$.  There exist point $y_1,y_2\in \Sigma_r$ such that $u(y_1)=M(r)$ and $u(y_2)=m(r)$.
If $M(r)=m(r)$, there is nothing to prove. Otherwise, $M(r)>m(r)$ and
 we consider a small $\eps>0$ such that 
 $M(r)-\varepsilon> m(r)+\varepsilon$.
 Then $y_1\in \{u>M(r)-\eps\}$ and $y_2\in \{u<m(r)+\varepsilon\}$.
 Let $U$ be the connected component of   $\{u>M(r)-\varepsilon\}$  
 that contains $y_1$ and $V$ be the connected component 
 of $\{u>M(r)-\varepsilon\}$ that contains $y_2$. By what we have seen, 
 $x_0\in \partial U$ and $\infty\in \partial V$.
This implies that $\diam(U)\ge d(x_0, y_1)=r$ and $\diam(V)=\infty.$
%\begin{align*}\diam(V)&\ge \dist (y_2, \partial \Om)\\
%&\ge \dist (x_0, \partial \Om)-d(x_0, y_2)\ge 2r_0-r\ge r_0. 
 %\end{align*}
 
 Define $E=\{u\ge M(r)-\varepsilon\}$ and $F=\{ u\le m(r)+\varepsilon\}$.
 Then $y_1\in U\sub E$ and $y_2\in V\sub F$. This implies that 
 $$ \diam(E)\wedge \diam (F) \ge   \diam(U)\wedge \diam (V)\ge r. $$
 On the other hand, 
 $\dist(E,F)\le  d(y_1, y_2)\le 2r.$
 Applying Lemma \ref{lem:cap lower bound},  one has 
\[
\capa_Q(E,F; \Omega)\ge C
\]
with a constant $C>0$ independent of $E$ and $F$, and hence 
independent of $r$ and $\varepsilon$. On the other hand, 
\begin{align*}\capa_Q(E,F; \Omega)&=\capa_Q(\{u\ge M(r)-\varepsilon\}, \{u>m(r)+\varepsilon\})=\frac{1}{(M(r)-m(r)-2\varepsilon)^{Q-1}}. 
\end{align*} 
Let $\varepsilon\to 0$, then we conclude  that the claim holds (with a suitable constant independent of $r$ and of $u$). 
\end{comment}
%To conclude the proof, we fix $r_0>0$ and consider $x\in X$ be such that $0<d(x,x_0)\le r_0$ and $d(x,x_0)>r_0$ respectively.

If $0<d(x,x_0)\le r_0$, then let $r= d(x_0,x)$ and we see that 
\begin{equation}\label{function value lower bound I}
\begin{aligned} 
u(x) & \ge m(r) \ge M(r) -C\\
 &\ge {\capa_Q(\overline B_r, B_{r_0})}^{\frac 1{1-Q}}+m(r_0)-C. 
\end{aligned}
\end{equation}
and  
\begin{equation}\label{function value upper bound I}
\begin{aligned} 
u(x) & \le M(r) \le m(r) +C\\
 &\le {\capa_Q(\overline B_r, B_{r_0})}^{\frac 1{1-Q}}+M(r_0)+C. 
\end{aligned}
\end{equation}
In other words, $u(x)$ agrees with  ${\capa_Q(\overline B_r, B_{r_0})}^{\frac{1}{1-Q}} $ 
up to a uniformly bounded additive term, that is, 
\[
{\capa_Q(\overline B_r, B_{r_0})}^{\frac 1{1-Q}}+m(r_0)-2C\le u(x)\le {\capa_Q(\overline B_r, B_{r_0})}^{\frac 1{1-Q}}+m(r_0)+2C.
\]
Likewise, one can obtain similar conclusion if $d(x,x_0)>r_0$. 

If $v$ is another Green function in $\Om$ with pole $x_0$, then all the previous considerations apply and we obtain similar estimates, 
\[
{\capa_Q(\overline B_r, B_{r_0})}^{\frac 1{1-Q}}+m'(r_0)-2C\le v(x)\le {\capa_Q(\overline B_r, B_{r_0})}^{\frac 1{1-Q}}+m'(r_0)+2C,
\]
where $m'(r_0)=\min\{v(x):x\in \Sigma_r\}.$ Let $c_0=\max\{u(x)-v(y): x,y\in \Sigma_{r_0}\}$ and $C_0=c_0+4C$. 

%Hence, $u(x)-v(x)\le C_0$ such that 

%possibly with different additive constants. %Hence, $|u-v|\le C$ for all $0<d(x,x_0)$.
 In summary, we conclude that $|u(x)-v(x)|\le C_0$ for all $x\in X\setminus\{x_0\}$ satisfying $d(x,x_0)<\infty$.
\begin{comment}
Case II: If $d(x,x_0)>r_0$, we let $r= r_0$ and $R=d(x_0,x)$ in the above estimates. Then it follows that 
\begin{equation}\label{function value upper bound II}
\begin{aligned} 
u(x) & \le M(R) \le m(R) +C\\
 &\le M(r_0)-{\capa_Q(\overline B_{r_0}, B_{R})}^{\frac 1{1-Q}}+C. 
\end{aligned}
\end{equation}
and  
\begin{equation}\label{function value lower bound II}
\begin{aligned} 
u(x) & \ge m(R) \ge M(R) -C\\
 &\ge m(r_0)-{\capa_Q(\overline B_{r_0}, B_{R})}^{\frac 1{1-Q}}-C. 
\end{aligned}
\end{equation}
\end{comment}
%\textcolor{blue}{Similar estimates holds for $v$ that the function value agrees with  $-{\capa_Q(\overline B_{r_0}, B_{R})}^{\frac{1}{1-Q}} $  up to a uniformly bounded additive term (depending on v).}
\end{proof} 

As in the relatively compact case, the proof of the previous theorem implies immediately the following corollary

\begin{cor}\label{bound-bis-global} In the hypotheses of Lemma \ref{global bound}, if $u$ is a  $Q$-harmonic Green function in $X$ with singularity at $x_0\in X$, then there exists a constant $C>0$ depending only on the structure constants, such that  for $0<r<R$ one has \begin{equation}\label{bounded-bis2}
\bigg|u(x)-u(y) - {\capa_Q( \overline B_r, B_{R})}^{\frac 1{1-Q}}\bigg|\le C
\end{equation}
for all $x,y\in \Omega\setminus\{x_0\}$, such that $r=d(x_0,x)$ and $R=d(x_0,y)$. 
\end{cor}
Combining with \eqref{Q metric ring}, Corollary \ref{bound-bis-global} implies the following result, which completes the proof of Theorem \ref{thm:ubddGreen} part (ii).

\begin{cor}\label{cor-growth}
Let $(X, d, \mu)$ be a unbounded, complete $Q$-Ahlfors regular metric measure space supporting a $Q$-Poincar\'e inequality for $Q>1$. Let $x_0\in X$. Let $u$ be a global $Q$-harmonic Green function with singularity at $x_0$. Then there exists a constant $C\ge 1$ depending on the structure constants of $X$ and the value of $u$ on a fixed radius such that for all $x\in X\setminus \{x_0\}$, we have
 \[
 C^{-1}\log\bigg(\frac{1}{d(x,x_0)}\bigg)-C\le u(x)\le C\log\bigg(\frac{1}{d(x,x_0)}\bigg)+C.
 \]

\end{cor}

Next, we show that the limits $\displaystyle\lim_{d(x,x_0)\to 0}(u(x)-v(x))$ and $\displaystyle\lim_{d(x,x_0)\to \infty}(u(x)-v(x))$ exist. Rather than relying on Caccioppoli-type estimates, as in the rest of the literature, we use Clarkson inequalities (see for instance \cite[Theorem 2.38]{Adams}) to bound 
the $L^Q$-norm of  $Du-Dv$. 
\begin{lem}[Clarkson's inequalities]\label{clarkson-lemma}
Let $1<p< \infty$, and $p'=p/(p-1)$. 
For $1<p\le 2$ one has 
 $$  \Vert\tfrac12 (f-g)\Vert_p^{p'}+ \Vert\tfrac12 (f+g)\Vert_p^{p'}\le 
  \Big(\tfrac12 (\Vert f\Vert_p^p+\Vert g\Vert_p^p)\Big)^{p'/p},$$
  and for $2\le p < \infty$ one has
  $$\Vert\tfrac12 (f-g)\Vert_p^p+ \Vert\tfrac12 (f+g)\Vert_p^p\le 
  \tfrac12 (\Vert f\Vert_p^p+\Vert g\Vert_p^p),$$
  for all $\R^n$-valued  $L^p$-functions $f$ and $g$.
\end{lem}

As we mentioned in the introduction, while in the proof the PDE \eqref{cpl1} is not explicitly used, the  Cheeger structure is implicitly needed in the proof of the Clarkson inequality, as one needs an inner product space structure on the space of gradients.

In the following we will use the letter $C$ to denote constants depending only on the structure conditions, that possibly may change from line to line.

\begin{lem}\label{growth est1} Let $1<p< \infty$, and $p'=p/(p-1)$. Suppose $u, v$ are two global Green functions with pole at $x_0\in X$. Then $u,v$ are normalized such that for each $-\infty<\alpha<\beta<\infty$ one has $$ \int_{\{\alpha< u<\beta\}} |Du|^p=M:=\beta-\alpha=
\int_{\{\alpha< v<\beta\}} |D v|^p.$$ There exists a constant $C$ depending on $p$ and $C_0$ from Lemma \ref{global bound} such that
$$   \int_{\{\alpha<u<\beta\}} |D(u-v)/2|^p \le C M^\delta+C$$
with $\delta=1-p/p'\in [0,1)$ when $p\in(1,2)$ and $\delta=0$ when $p\in [2,\infty)$.
\end{lem}

\begin{proof}
By virtue of Lemma \ref{global bound}, we know that for some constant $C_0>0$% depending only on the structure conditions
$$ |u(x)-v(x)|\le C_0.$$
Consequently, one has 
$$ \int_{\{\alpha<  u<\beta\}} |D v|^p\le  \int_{\{\alpha-C<  v<\beta+C\}} |D v|^p = \beta-\alpha+2C=M+2C_0.
$$

 We consider the following two cases.
 
 \smallskip
 {\em Case 1:}  $M\le 2C_0p$.
 
 If $1<p\le 2$, the first Clarkson's inequality yields 
 
 \begin{align*}
\biggl( \int_{\{\alpha<  u<\beta\}}|D(u-v)/2|^p\biggr)^{p'/p}\le &\biggl( \int_{\{\alpha<  u<\beta\}}|D(u-v)/2|^p\biggr)^{p'/p}+\biggl( \int_{\{\alpha<  u<\beta\}} |D (u+v)/2|^p\biggr)^{p'/p}\\
 \le &\biggl(  \frac12   \biggl( \int_{\{\alpha<  u<\beta\}} |Du|^p +   \int_{\{\alpha<  u<\beta\}} |Dv|^p \biggr)    \biggr)^{p'/p}\\
 \le & (M+C_0)^{p'/p}.
  \end{align*}

This implies immediately
\[
\int_{\{\alpha<  u<\beta\}}|D(u-v)/2|^p\le 3pC_0\le CM^\delta+C.
\]

  \smallskip
  
  If $2\le p < \infty$, we use the second of the Clarkson inequalities, obtaining 
  
\begin{align*}
 \int_{\{\alpha<  u<\beta\} }|D (u-v)/2|^p&\le (M+C_0) \le 3pC_0  \le C M^\delta+C.
  \end{align*}
  \smallskip
  
 {\em Case 2:}  $M>2pC_0$.
Note that
$$\frac{u+v}{2}\ge \beta-C_0 \text { on } \{ u\ge \beta\}\quad\quad{and}\quad\quad\frac{u+v}{2}\le \alpha+C_0 \text { on } \{ u\le \alpha\}.$$
Let $g=(u+v)/2$ and $\tilde{g}=\frac{(g-\alpha-C_0)_+}{\beta-\alpha-2C_0}$.  
It follows that $\tilde{g}\ge 1$ on  $\{ u\ge \beta\}$ and $\tilde{g}=0$ on 
$\{ u\le \alpha\}$. Hence 
\begin{align*}
\int_{\{\alpha<  u<\beta\}} |D (u+v)/2|^p
&\ge (\beta-\alpha-2C_0)^p{{\rm cap}_p }(\{u\ge \beta\}, \{u>\alpha\})\\
&= (M-2C_0)^p M^{1-p}\\
&\ge  M(1-2C_0/M)^p\ge M-2pC_0.
 \end{align*}

The last inequality follows from an elementary observation. Consider a function $f(t)=(1-2C_0 t)^p$ for $t\in [0,\frac{1}{2C_0}]$. Then $f$ is convex and $f(t)\ge 1-2pC_0t$ since the right hand side is the tangent line of $f$ at $0$. Plugging in $t=\frac{1}{M}$, we obtain the last inequality. 

 In the range $1<p\le 2$, the first Clarkson's inequality yields 
 %(integration is over $\{\alpha<  u<\beta\}$):
 \begin{align*}
\biggl( \int_{\{\alpha<  u<\beta\}}|D(u-v)/2|^p\biggr)^{p'/p}&+\biggl( \int_{\{\alpha<  u<\beta\}} |D (u+v)/2|^p\biggr)^{p'/p}\\
& \le \biggl(  \frac12   \biggl( \int_{\{\alpha<  u<\beta\}} |Du|^p +   \int_{\{\alpha<  u<\beta\}} |Dv|^p \biggr)    \biggr)^{p'/p}.
  \end{align*}

The latter implies

\begin{align*}
 \biggl(  \int_{\{\alpha<  u<\beta\} }|D (u-v)/2|^p\biggr)^{p'/p}&\le (M+2pC_0)^{p'/p}-(M-2pC_0)^{p'/p}\\
 &\le M^{p'/p} ((1+2pC_0/M)^{p'/p}-(1-2pC_0/M)^{p'/p}).%\\ &\le C M^{p'/p-1}+C.
  \end{align*}
  
Let $r=\frac{2pC_0}{M}<1$ and $\alpha=p'/{p}>1$. Applying general binomial formula, we obtain
\[
\Big((1+r)^\alpha-(1-r)^\alpha\Big)^{\frac{1}{\alpha}}\le C\Big(r+r^3+\cdots r^k\Big)^{\frac{1}{\alpha}}\le Cr^\frac{1}{\alpha},
\]
where $k$ is determined by $\alpha$. The  conclusion follows. 

  In the range $2\le p < \infty$, the second of the Clarkson inequalities implies  
\begin{align*}
 \int_{\{\alpha<  u<\beta\} }|D (u-v)/2|^p&\le (M+2pC_0)-(M-2pC_0) \\
 &\le 4pC_0  \le C M^\delta+C,
  \end{align*}
thus concluding the proof.
  \end{proof}
  
\begin{lem}\label{growth est2} 
Let $(X, d, \mu)$ be an unbounded, complete $Q$-Ahlfors regular measure space supporting a $Q$-Poincar\'e inequality for $Q >1$. Suppose $u, v$ are two global Green functions with pole at $x_0\in X$. %Under the same assumptions as Lemma \ref{growth est1} with $p=Q$,
Then there exists a constant $C$ %depending on $Q$ and $C_0$ from Lemma \ref{global bound} 
such that 
 $$\int_{B(x_0, 1/r)\setminus B(x_0, r)} |D (u-v)|^Q \le C (\log(1/r))^\delta+C, $$
 for all $r\in (0,1]$,
 where $\delta\in [0,1)$.
\end{lem}

\begin{proof}
This follows from the previous lemma in combination with the growth estimates from Corollary \ref{cor-growth},
$$\frac{1}{C} \log(1/d(x,x_0))-C\le u(x) \le C \log(1/d(x,x_0))+C. \qedhere$$
\end{proof}

We recall that a pathwise connected space $X$ is Loewner if \eqref{eq:cap lower bound} holds for all disjoint nondegenerate continua in $X$. The estimate in Lemma \ref{lem:cap lower bound} from \cite{HK98} implies that PI-spaces are Loewner. 
\begin{lem}Suppose $E$ and $F$ are nondegenerate, disjoint continua in a Loewner space $X$ connecting
$B(x_0,r/2)$ to $X\setminus B(x_0,r)$, and suppose that $u$ is a continuous Sobolev function such that $u|_E\ge 1$ and $u|_F\le 0$. 
Then for some $L\ge 2$ and $C>0$
 (independent of $u$ and $r$) we have 
$$ \int_{B(x_0, Lr)\setminus B(x_0, r/L)} |Du|^Q\ge C.$$ 
\end{lem}

\begin{proof} The Loewner property (Lemma \ref{lem:cap lower bound}) implies there exists a constant $c_0>0$ such
that 
\begin{equation}\label{LWNR}
\int_{X} |Df|^Q\ge c_0\end{equation}
for every Sobolev function with $f(x) \ge 1$ for $x\in E$ and $r/2\le|x-x_0|\le r$
and $f(x)\le 0$ for $x\in F$ and  $r/2\le|x-x_0|\le r$.

 Let $v=\min\{ u_+,1\}$. Then $0\le v\le 1$ and  $|Dv|\le |Du|$.  
 
 Fix a constant $M>0$ such that $(1/M)^{1/Q}<1/2$. If we choose  $L\ge 1$ large enough (independent of $u$ and $r$), then there exists  a Lipschitz function $0\le \eta\le1 $ 
 such that $\eta(x)=1$ for $x\in X$ with  $r/2\le |x-x_0|\le r$, $\eta(x)=0$ for 
 $x\in X$ with $x\in B(x,r/L)$ or $x\in X\setminus B(x_0,Lr)$, and 
 $$\int_{X} |D\eta|^Q\le c_0/M. $$
 
  Now choosing $f=v\eta$ in the \eqref{LWNR}, one has 
   $Df=v d\eta + \eta Dv$, and consequently  
  \begin{align*}
  c_0^{1/Q}&\le \biggl(\int |Df|^Q\biggr)^{1/Q}\le  \biggl(\int |vD\eta|^Q\biggr)^{1/Q}+
   \biggl(\int |\eta Dv|^Q\biggr)^{1/Q}\\
   &\le \biggl(\int |D\eta|^Q\biggr)^{1/Q}+  \biggl(\int_{B(x_0, Lr)\setminus B(x_0, r/L)} |Dv|^Q\biggr)^{1/Q}  \\
   &\le \frac12   c_0^{1/Q}+  \biggl(\int_{B(x_0, Lr)\setminus B(x_0, r/L)} |Du|^Q\biggr)^{1/Q}, 
   \end{align*}
  and the statement follows.
  \end{proof}

\begin{lem} \label{lem:lowcapbd} Suppose $E$ and $F$ are nondegenerate, disjoint continua in a Loewner space $X$ connecting
$B(x_0,r_1)$ to $X\setminus B(x_0,r_2)$, where $0<r_1<r_2$, and suppose that $u$ is a continuous Sobolev function such that $u|_E\ge 1$ and $u|_F\le 0$. 
Then there exists a constant (only depending on the space) such that
$$ \int_{B(x_0, r_2)\setminus B(x_0, r_1)} |Du|^Q\ge C \log (r_2/r_1)-C.$$  
\end{lem}
\begin{proof} This follows from the previous lemma, since we can place
about $\log (r_2/r_1)$ pairwise disjoint annuli of the form 
$B(x_0, Lr)\setminus B(x_0, r/L)$ inside the annulus $B(x_0, r_2)\setminus B(x_0, r_1)$.
\end{proof}

\begin{lem}\label{limits exist}  Let $(X, d, \mu)$ be an unbounded, complete $Q$-Ahlfors regular measure space supporting a $Q$-Poincar\'e inequality for $Q >1$. Let $x_0\in X$. Let $h$ be a bounded and continuous Sobolev function in $X\setminus\{x_0\}$. Suppose that for each $\alpha\in \R$ the sets $\{h>\alpha\}$ and  $\{h<\alpha\}$ 
have no (non-empty) components that are compactly contained in $X\setminus\{x_0\}$ and that there are constants $\delta\in [0,1)$ and $C>0$ such that 
 \begin{equation}\label{sublinear growth}
 \int_{B(x_0, 1/r)\setminus B(x_0, r)} |D h|^Q \le C (\log(1/r))^\delta+C, 
 \end{equation}
 for all $r\in (0,1]$.
 Then the limits $\lim_{d(x, x_0)\to 0} h(x)$ and $\lim_{d(x, x_0)\to \infty} h(x)$  exist.
\end{lem}

\begin{proof} In order to show that $\lim_{x\to x_0} h(x)$ exists, we argue by
 contradiction and assume that 
 $$\liminf_{x\to x_0}  h(x)< \limsup_{x\to x_0}  h(x). $$
Then we can choose real numbers $a,b\in \R$ such that 
 $$ \liminf_{x\to x_0}  h(x_0)<a<b<  \limsup_{x\to x_0}  h(x_0).$$
It follows that the sets $A\coloneqq \{h<a\}$ and $B\coloneqq \{h>b\}$ contain the point $x_0$ in their closures. 
So we can find sequences $\{y_n\}$ and $\{z_n\}$ in $A$ and $B$, respectively, such that $y_n\to x_0$ and $z_n\to x_0$ as $n\to \infty$.

Let $E_n$ be the closure of the component of $A$ containing $y_n$.
Since the open set $A$ has no components compactly contained in 
$X\setminus \{x_0\}$, we have  $x_0\in E_n$ or $E_n$ is unbounded. Similarly, if $F_n$ is the closure of the component of $B$ containing $z_n$, 
then   $x_0\in F_n$ or  $F_n$ is unbounded.    

We now consider $u=(h-a)/(b-a)$. Then $u\ge 1$ on $F_n$ and $u\le 
0$ on $E_n$ for each $n\in \N$. Moreover, $Du=Dh/(b-a)$ and so for some 
$C'>0$ we have
 $$\int_{B(x_0, 1/r)\setminus B(x_0, r)} |D u|^Q \le C' (\log(1/r))^\delta+C'$$
 for all $r\in (0,1]$. 
 
{\em Claim.} There exist $R>0$ such that for all small enough $r>0$ there exist 
$n,k\in \N$ such that $E=E_n$ and $F=F_k$ connect $B(x_0,r)$ and $X\setminus 
B(x_0, R)$.
 
 \smallskip
 To see this, we consider four cases.
 
 \smallskip
 {\em Case 1:}  $x_0\in E_n$ for some $n\in \N$ and $x_0\in F_k$ for some 
 $k\in \N$.
 
 \smallskip
 If $R\coloneqq \min\{d(y_n, x_0), d(z_n,x_0)\}$ and $r<R$, then 
 $E=E_n$ and $F=F_k$ connect $B(x_0,r)$ and $X\setminus B(x_0, R)$, because 
  the latter set contains $y_n$ and $z_k$.
 
  \smallskip
 {\em Case 2:}  $x_0\in E_n$ for some $n\in \N$ and $x_0\in F_k$ for no 
 $k\in \N$.
 
   \smallskip
 Then all $F_k$ must be  unbounded. Let $R=d(x_0, y_n)$. If $r<R$, then we can choose $k\in \N$ such that $d(x_0, z_n)<r$. Then $E=E_n$ and $F=F_k$ connect
  $B(x_0,r)$ and $X\setminus B(x_0, R)$. 
  
  \smallskip
 {\em Case 3:}  $x_0\in E_n$ for no $n\in \N$ and $x_0\in F_k$ for some 
 $k\in \N$.
 
   \smallskip
   This is similar to Case 2 with the roles of the sets $E_n$ and $F_k$ reversed.
   
   \smallskip
 {\em Case 4:}  $x_0\in E_n$ for no $n\in \N$ and $x_0\in F_k$ for no
 $k\in \N$.
 
   \smallskip
  Consequently all sets $E_n$ and $F_k$ are unbounded. Let $R=1$ and $0<r<1$. 
  Then we can find $n,k\in \N$ such that $y_n,z_k\in B(x_0,r)$. Then 
  $E=E_n$ and $F=F_k$ connect
  $B(x_0,r)$ and $X\setminus B(x_0, R)$. 
  
    \smallskip
    Since these four cases exhaust all possibilities, then the Claim follows.
    The Claim in combination with   Lemma~\ref{lem:lowcapbd} 
    (applied to $r_1=r$ and $r_2=R$) shows 
  that with some constants $c_1, c_2>0$  for all small enough $r>0$ we have  
 \begin{align*} c_1\log(R/r)-c_2&\le \int_{B(x_0, R)\setminus B(x_0,r)}|Dh|^Q\\
 &\le 
  \int_{B(x_0, 1/r)\setminus B(x_0,r)}|Dh|^Q\le C(\log(1/r))^\delta+C.
  \end{align*}
  As $r\to 0$, the first term in this inequality grow linearly with $\log(1/r)$,
  while the last term grow sublinearly as $\delta\in [0,1)$. This is impossible
  and we arrive at a contradiction.
  
  This shows that the limit  $\lim_{x\to x_0} h(x)$ exists, as desired.
  The existence of the limit  $\lim_{x\to \infty} h(x)$ is shown along similar lines.
  Again we argue by contradiction, and find sets $E_n$ and $F_n$ in a similar 
  way as before. In this case  one can show that there exists $R>0$ that that 
  for all small enough $r>0$ there exist $n,k\in \N$ such that 
  $E_n$ and $F_k$ connects $B(x_0, R)$ and $B(x_0, 1/r)$. Based on Lemma~\ref{lem:lowcapbd} in combination with our assumptions, one obtains a contradiction 
  as before and concludes  that  $\lim_{x\to \infty} h(x)$ indeed exists.
\end{proof}

%\textcolor{blue}{Add justification that $h=u-v$ satisfy all the conditions in the above theorem.}

We finally complete the proof of Theorem \ref{thm2}.

\begin{proof}
Let $h=u-v$ in the previous lemma, Lemma \ref{global bound} and Lemma \ref{growth est2} implies $h$ is bounded and satisfies sublinear growth \eqref{sublinear growth}. Comparison principle implies that for each $\alpha\in \R$ the sets $\{h>\alpha\}$ and  $\{h<\alpha\}$ 
have no (non-empty) components that are compactly contained in $X\setminus\{x_0\}$. Hence, we obtain that $\displaystyle\lim_{d(x,x_0)\to 0}(u(x)-v(x))$ and $\displaystyle\lim_{d(x,x_0)\to \infty}(u(x)-v(x))$  both exist from the previous lemma. 
Setting $$\alpha=\lim_{d(x,x_0)\to 0}(u(x)-v(x))\quad\quad {\rm and} \quad\quad\beta=\lim_{d(x,x_0)\to \infty}(u(x)-v(x))$$
we will prove that $\alpha=\beta$. Without loss of generality, we assume $\beta\ge \alpha$. Let $\varepsilon>0$ be fixed. There exists $r>0$ such that for all $x\in B_r(x_0)$, 
\begin{equation}\label{limit0}
-\varepsilon<u(x)-v(x)-\alpha<\varepsilon,
\end{equation}
and there exists $R>0$ such that for all $x\in X\setminus B_R(x_0)$, 
\begin{equation}\label{limitinf}
-\varepsilon<u(x)-v(x)-\beta<\varepsilon.
\end{equation}

%Since $u(x),v(x)\to \infty$ as $x\to x_0$, we can choose Let $M_r=\max\{v(x): x\in \partial B_r\}$ and $b=M_r-\alpha-\epsilon$. Then by \eqref{III}, we have $\{x: u(x)\ge M_r\}\subset \overline{B_r}$ and

Since $u(x),v(x)\to \infty$ as $x\to x_0$, we can choose $M$ sufficiently large such that %and $r_0<r$ such that for all $x\in B_{r_0}(x_0)$, 
\[
\{ u\ge M\}\subset B_r(x_0)\quad\quad\quad\quad\text{and} \quad\quad\quad\{ v\ge M-\alpha-\varepsilon\}\subset B_r(x_0).
\]
And \eqref{limit0} implies that
\[
\{ u\ge M\}\subset \{ v\ge M-\alpha-\varepsilon\},
\]
or equivalently, 
\[
\{ v< M-\alpha-\varepsilon\}\subset \{ u< M\}.
\]

On the other end, since $u(x),v(x)\to -\infty$ as $x\to\infty$, we can choose $m$ sufficiently small such that %and $r_0<r$ such that for all $x\in B_{r_0}(x_0)$, 
\[
\{ u\le m\}\subset X\setminus B_R(x_0)\quad\quad\quad\quad\text{and} \quad\quad\quad\{ v\le m-\beta+\varepsilon\}\subset X\setminus B_R(x_0).
\]
And \eqref{limitinf} implies that
\[
\{ u\le m\}\subset \{ v\le m-\beta+\varepsilon\},
\]
or equivalently, 
\[
\{v>m-\beta+\varepsilon\}\subset \{ u> m\}.
\]
Thus, it follows that
\[
\{ m-\beta+\varepsilon<v< M-\alpha-\varepsilon\} \subset \{ m<u<M\}.
\]
In view of the renormalization in terms of capacity  of $u,v$ as Green functions, one obtains
\[
\begin{aligned}
(M-\alpha-\varepsilon-m+\beta-\varepsilon)^{1-Q}&=
\capa_Q( \{v> m-\beta+\varepsilon\}, \{v> M-\alpha-\varepsilon\})\\
%\C (\{ m-\beta+\varepsilon<v< M-\alpha-\varepsilon\} )\\
%&\ge \C(\{ m<u<M\})\\
&\ge \capa_Q(\{u> m\},\{ u>M\})\\
&=(M-m)^{1-Q}.
\end{aligned}
\]
Hence, $0\le \beta-\alpha\le 2\varepsilon$. Let $\varepsilon\to 0$, we get that $\alpha=\beta$.

Finally, we show $u(x)=v(x)$ for all $x\in X\setminus\{x_0\}$. Since 
\[
\lim_{d(x,x_0)\to 0}(u(x)-v(x))=\lim_{d(x,x_0)\to \infty}(u(x)-v(x))=\alpha,
\]
for all $\varepsilon$, there exists a small $r$ and a large $R$ such that for all $x\in \partial({B_R(x_0)\setminus \overline{B_r(x_0)}})$
\[
-\varepsilon\le u(x)-v(x)-\alpha\le \varepsilon.
\]
Comparison Principle implies that the above inequality holds for all $x\in B_R(x_0)\setminus \overline{B_r(x_0)}$. Let $\varepsilon\to 0$, we get that $u(x)=v(x)+\alpha$ for all $x\in X\setminus\{x_0\}$. Since $u(y_0)=v(y_0)$, it follows that $\alpha=0$ and so
$u(x)=v(x)$
for all $x\in X\setminus\{x_0\}$, concluding the proof.
\end{proof}

\setcitestyle{square}

\section{Proof of Theorem \ref{fundamental solution}}

%{\bf We need to recall the estimates from Danielli-Marola-Garofalo here}

We first recall a Theorem on the local behavior of a Green function near a singularity from \cite[Theorem 5.2]{DGM10}. For related results with less restrictive hypotheses, see \cite{BBLe23}.

\begin{thm}\label{DGM5.2}
Let $(X, d, \mu)$ be a complete Ahlfors $Q$-regular metric measure space that supports a $Q$-Poincar\'e inequality for $Q>1$.  Let $\Omega$ be a relatively compact domain in $X$ and $x_0\in \Omega$. If $u$ is a $Q$-harmonic Green function in $\Omega$ with singularity at $x_0$, then there exist positive constants $C,R_0, R_1$ such that $R_1\le \frac{R_0}{2}$ and for any $0<r<R_1$ and $x\in B(x_0,r)$ we have
%\begin{itemize}
%\item[(1)] 
% If $p=Q$,  then
\[
C^{-1}\log\biggl(\frac{R_0}{d(x,x_0)}\biggr)\le u(x)\le C\log\biggl(\frac{R_0}{d(x,x_0)}\biggr).
\]
%\item[(2)]
% If $1<p<Q$, then
%\[C^{-1}\bigg(\frac{d(x,x_0)^p}{\mu(B(x_0, d(x,x_0)))}\bigg)^{\frac{1}{p-1}}\le u(x)\le C\bigg(\frac{d(x,x_0)^p}{\mu(B(x_0, d(x,x_0)))}\bigg)^{\frac{1}{p-1}}.\]

%\end{itemize}
\end{thm}

%Note the same local behavior of a Green function near a singularity as that in Theorem \ref{local behavior} holds on a relative compact domain, see \cite[Theorem 5.2]{DGM10}. As a result, we obtain the following lemma.

%\vspace{-1cm}
\begin{lem}\label{corollary 2 in Serrin 65}
Let $(X, d, \mu)$ be a complete Ahlfors $Q$-regular metric measure space that supports a $Q$-Poincar\'e inequality for $Q>1$. Let $\Omega$ be a relatively compact domain in $X$ and let $u$ be a $Q$-harmonic Green function in $\Omega$ with singularity at $x_0$. Then there exists a constant $R_0$ such that $B(x_0, R_0)\subset \Omega$ and  for $0<r<\frac{R_0}{4}$ the function $\theta=|D u|^{Q-2} Du$ is integrable in $B(x_0,r)\setminus \{x_0\}$ and 
$$\int_{B(x_0,r)\setminus\{x_0\}} |\theta|\,    \le C r\log \biggl(\frac{2R_0}{r}\biggr)^{Q-1}.%\begin{cases}r & \text{ if } p<Q,\\\displaystyle  r\log \biggl(\frac{2R_0}{r}\biggr)^{Q-1} &\text{ if } p=Q. \end{cases} 
$$
\end{lem}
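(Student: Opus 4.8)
The plan is to estimate $\int_{B(x_0,r)\setminus\{x_0\}}|\theta|$ by a dyadic decomposition of the punctured ball into annuli $A_k = B(x_0, 2^{-k}r)\setminus \overline{B}(x_0, 2^{-k-1}r)$, $k\ge 0$, and bounding the contribution of each annulus using H\"older's inequality, the layer-cake/capacity identity for Green functions, and the precise two-sided asymptotics of $u$ from Theorem~\ref{DGM5.2}. Writing $\rho_k = 2^{-k}r$, on each annulus $A_k$ H\"older gives
\[
\int_{A_k}|\theta| = \int_{A_k}|Du|^{p-1} \le \mu(A_k)^{1/p}\Big(\int_{A_k}|Du|^{p}\Big)^{(p-1)/p}.
\]
The crucial point is to control $\int_{A_k}|Du|^p$. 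Since $u$ is a Green function, the capacity identity in Definition~\ref{bd def}(iv) together with Lemma~\ref{cap01} yields $\int_{\{\beta<u<\alpha\}}|Du|^p\,d\mu = \alpha-\beta$ for all $\beta<\alpha$ in the range of $u$ (this computation already appears in the proof of Lemma~\ref{gradient estimates}). By the maximum principle, $A_k \subset \{\,m(\rho_{k-1}) < u < M(\rho_{k+1})\,\}$ up to sets of measure zero (using $\{u>M(\rho)\}\subset B_\rho \subset \{u\ge m(\rho)\}$ from the argument in Lemma~\ref{bound}), so
\[
\int_{A_k}|Du|^p \le M(\rho_{k+1}) - m(\rho_{k-1}).
\]
Now invoke Theorem~\ref{DGM5.2}: in the case $p=Q(x_0)$ one has $m(\rho), M(\rho) \asymp \log(R_0/\rho)$, so $M(\rho_{k+1})-m(\rho_{k-1}) \le C\log(2R_0/\rho_k)$ (the difference of two quantities each $\asymp \log(R_0/\rho_k)$ is still $O(\log(R_0/\rho_k))$, since they differ only by bounded multiplicative and additive constants and by a bounded shift in $k$); in the case $1<p<Q(x_0)$ one has $m(\rho),M(\rho)\asymp (\rho^p/\mu(B(x_0,\rho)))^{1/(p-1)}$, so $M(\rho_{k+1})-m(\rho_{k-1}) \le C(\rho_k^{p}/\mu(B_{\rho_k}))^{1/(p-1)}$.

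Assembling the pieces: using the upper mass bound $\mu(A_k)\le \mu(B_{\rho_k}) \lesssim \rho_k^Q$ — or more sharply, in the subcritical case, keeping $\mu(B_{\rho_k})$ explicit — and summing over $k$, I would get in the case $1<p<Q(x_0)$
\[
\int_{A_k}|\theta| \lesssim \mu(B_{\rho_k})^{1/p}\Big(\frac{\rho_k^{p}}{\mu(B_{\rho_k})}\Big)^{1/p} = \mu(B_{\rho_k})^{1/p}\cdot \frac{\rho_k}{\mu(B_{\rho_k})^{1/p}} = \rho_k = 2^{-k}r,
\]
so $\sum_k \int_{A_k}|\theta| \lesssim \sum_k 2^{-k}r \lesssim r$, which is the claimed bound. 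In the critical case $p=Q(x_0)$, using $\mu(A_k)\lesssim \rho_k^{Q}$ and (since $p=Q(x_0)\le Q$, and the pointwise-dimension lower mass bound $\mu(B_{\rho_k})\gtrsim \rho_k^{Q(x_0)}$ near $x_0$) one gets
\[
\int_{A_k}|\theta|\lesssim \rho_k^{Q/Q(x_0)}\big(\log(2R_0/\rho_k)\big)^{(Q(x_0)-1)/Q(x_0)},
\]
and one must check that $\sum_k$ of this is $\lesssim r(\log(2R_0/r))^{Q(x_0)-1}$. Since $\rho_k = 2^{-k}r$ the geometric decay $\rho_k^{Q/Q(x_0)}$ (recall $Q\ge Q(x_0)$, so the exponent is $\ge 1$) dominates the logarithmic growth $(\log(2R_0/\rho_k))^{(Q(x_0)-1)/Q(x_0)} \lesssim (k + \log(2R_0/r))^{(Q(x_0)-1)/Q(x_0)}$, and a routine comparison of the series with its first term gives the stated bound; in particular integrability of $\theta$ on $B(x_0,r)\setminus\{x_0\}$ follows since the series converges.

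\textbf{Main obstacle.} The delicate point is the annular energy estimate $\int_{A_k}|Du|^p \le M(\rho_{k+1})-m(\rho_{k-1})$: one must be careful that the super/sub-level set containing $A_k$ really is of the form $\{\beta<u<\alpha\}$ with the stated $\alpha,\beta$, which relies on the topological facts about level sets of $u$ (no components compactly contained, Lemma~\ref{lem:nococo}) and the behaviour $u\to\infty$ at $x_0$, exactly as extracted in the proof of Lemma~\ref{bound}; and then that the integral identity $\int_{\{\beta<u<\alpha\}}|Du|^p = \alpha-\beta$ applies on this set (which may not be precisely an annulus, but a slightly distorted one). Once this is in place, the rest is bookkeeping with Theorem~\ref{DGM5.2} and summation of geometric-times-logarithmic series. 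A secondary technical wrinkle is justifying that H\"older's inequality plus these bounds is valid on each $A_k$ despite $|Du|$ possibly being large near the inner boundary — but since each $A_k$ stays away from $x_0$, $u$ is $p$-harmonic and hence $|Du|\in L^p_{loc}$ there, so there is no issue.
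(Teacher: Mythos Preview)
Your approach is correct in spirit but takes a genuinely different route from the paper's.

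The paper does \emph{not} decompose into dyadic annuli and does not use the capacity identity $\int_{\{\beta<u<\alpha\}}|Du|^p \asymp \alpha-\beta$. Instead it proves a Caccioppoli-type inequality directly: testing the equation \eqref{cpl1} with $\phi=u\eta^p$ for a standard cutoff $\eta$ supported on $B_{2r}$, equal to $1$ on $B_r$, yields
\[
\int_{B_r\setminus\{x_0\}}|Du|^p \le \frac{C}{r^p}\int_{B_{2r}\setminus B_r}|u|^p.
\]
The right-hand side is then bounded using the pointwise two-sided asymptotics of Theorem~\ref{DGM5.2} on the annulus $B_{2r}\setminus B_r$ together with the mass bound $\mu(B_{2r})\lesssim r^{Q(x_0)}$. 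A single application of H\"older on all of $B_r$ then gives the claim---no summation is needed.

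Your route, by contrast, avoids Caccioppoli entirely by exploiting the Green-function normalization (Definition~\ref{bd def}(iv)) to control $\int_{A_k}|Du|^p$ via the oscillation $M(\rho_{k+1})-m(\rho_k)$. This is legitimate (the required level-set inclusions are exactly those from Lemma~\ref{bound}), and in the critical case it actually yields a sharper power of the logarithm, $(\log(2R_0/r))^{(Q(x_0)-1)/Q(x_0)}$, than what the statement asks for. The paper's argument is shorter and uses only the PDE plus Theorem~\ref{DGM5.2}; yours uses more structure but sidesteps the Caccioppoli step.

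One correction: your mass bound $\mu(A_k)\lesssim \rho_k^Q$ is stated backwards. Doubling gives a \emph{lower} bound with exponent $Q$; the available \emph{upper} bound near $x_0$ is $\mu(B_{\rho_k})\lesssim \rho_k^{Q(x_0)}$, coming from the definition of the pointwise dimension (see the paragraph after Theorem~\ref{metric ring capacity estimates}). With this fix your critical-case estimate reads $\int_{A_k}|\theta|\lesssim \rho_k\,(\log(2R_0/\rho_k))^{(Q(x_0)-1)/Q(x_0)}$, and the geometric series sums as you describe.
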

\begin{proof} We first show a Caccioppoli-type estimate for $u$: there exists a constant $C$ such that on balls $B(x_0, r)\subset\subset B(x_0, R)\subset\subset \Omega$ with $R\ge2r$, 
\begin{equation}\label{Caccioppoli}
\int_{B_r\setminus \{x_0\}} |Du|^Q\,  \le Cr^{-Q}\int_{B_R\setminus B_r} |u|^Q\,  .
\end{equation}
Define a cutoff function $\eta\in N_0^{1,Q}(B_R)$ such that $\eta$ is Lipschitz, $0\le \eta\le1$, $\eta=1$ in $B_r$ and $|D\eta|\le \frac{2}{R-r}$.
Since $u$ is Cheeger $Q$-harmonic in $B(x_0, R)\setminus \{x_0\}$, $u$ and $\phi=u\eta^Q\in N_0^{1,Q}(B_R)$ satisfy the equation \eqref{cpl1}, that is,

\[
\int_{B_R\setminus \{x_0\}} |Du|^{Q-2}Du\cdot D\phi =0,
\]
where
\[
D\phi= \eta^Q Du+p u \eta^{Q-1} D\eta.
\]
This implies that
\[
\begin{aligned}
\int_{B_R\setminus \{x_0\}}  |Du|^{Q}\eta^p&\le p\int_{B_R\setminus \{x_0\}} |Du|^{Q-1}\eta^{Q-1}|D\eta||u|\\
&\le p\Big(\int_{B_R\setminus \{x_0\}} |Du|^{Q}\eta^{Q}\Big)^{\frac{Q-1}{Q}} \Big(\int_{B_R\setminus \{x_0\}} |D\eta|^Q|u|^Q\Big)^{\frac{1}{Q}}.\\
\end{aligned}
\]
Hence, 
\[
\int_{B_R\setminus \{x_0\}}  |Du|^{Q}\eta^p\le p^p\int_{B_R\setminus \{x_0\}} |D\eta|^Q|u|^Q\le \frac{C}{(R-r)^p}\int_{B_R\setminus B_r} |u|^Q.
\]
Note $\eta=1$ on $B_r$ and the Caccioppoli-type estimate \eqref{Caccioppoli} follows immediately. 

Theorem \ref{DGM5.2} implies that there exist constants $C, R_0, R_1$ such that  $0<R_1<\frac{R_0}{2}$ and
\begin{equation}\label{p=Q}
C^{-1}\log\bigg(\frac{R_0}{d(x,x_0)}\bigg)\le u(x)\le C\log\bigg(\frac{R_0}{d(x,x_0)}\bigg)
%u(x)\le C_1\log\bigg(\frac{R_0}{d(x,x_0)}\bigg)\le C_1\log\bigg(\frac{R_0}{d(x,x_0)}\bigg).
\end{equation}
for $0<d(x,x_0)<R_1$. Hence, for $r$ satisfying $0<2r<R_1$ we have
\[
\begin{aligned}
\int_{B_{2r}\setminus B_{r}} |u|^{Q}&\le \sup_{B_{2r}\setminus B_{r}} |u|^{Q} \big( \mu(B_{2r})-\mu(B_{r})\big)\\
&\le C\log\biggl(\frac{R_0}{r}\biggr)^{Q} r^{Q}.
\end{aligned}
\]
Combined with the Caccioppoli-type estimate \eqref{Caccioppoli}, we get that
\[
\int_{B_r\setminus \{x_0\}} |Du|^{Q}\  \le Cr^{-Q} \int_{B_{2r}\setminus B_r} |u|^{Q}\le  C\log\biggl(\frac{R_0}{r}\biggr)^{Q} .
\]
Apply H\"older's inequality, we get that
\[
\begin{aligned}
\int_{B_r\setminus \{x_0\}} |Du|^{Q-1}\  &\le \biggl(\int_{B_r\setminus \{x_0\}} |Du|^{Q}\  \biggr)^{\frac{Q-1}{Q}} \mu(B_r)^{\frac{1}{Q}}\\
&\le  Cr\log\biggl(\frac{R_0}{r}\biggr)^{Q-1}.
\end{aligned}
\]
%If $p<Q$, by replacing \eqref{p=Q} by the counterpart in this case, see Theorem 5.2 in \cite{DGM10}, the same argument yields the result. The proof is complete.
\end{proof}
\begin{lem}\label{constant}
Let $(X, d, \mu)$ be a complete Ahlfors $Q$-regular metric measure space that supports a $Q$-Poincar\'e inequality for $Q>1$. Let $\Omega$ be a relatively compact domain in $X$ and let $B(x_0,R)\subset \Omega$. Let $\phi \in N^{1,Q}_0(B(x_0,R))$ such that $\phi=1$ in a neighborhood of $x_0$. If $u$ is a $Q$-harmonic Green function in $\Omega$ with singularity at $x_0$ then
there exists $K\in \R$ %independent of $R, \phi$ 
such that
$$\int_{\Omega} |Du|^{Q-2}Du\cdot D\phi  =K.$$
\end{lem}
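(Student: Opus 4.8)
The plan is to show that the quantity $\int_{\Om}|Du|^{p-2}Du\cdot D\phi$ takes the \emph{same} value for every admissible $\phi$ (i.e.\ every $\phi\in N^{1,p}_0(B(x_0,R))$ that equals $1$ in a neighborhood of $x_0$), and then to define $K$ to be this common value. The crucial point exploited throughout is that the cutoff condition $\phi\equiv 1$ near $x_0$ forces $D\phi$ to be supported away from the singularity, so $\phi$, and differences of two such functions, behave like legitimate test functions for the weak equation satisfied by the $p$-harmonic function $u$ on $\Om\setminus\{x_0\}$.

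First I would check that the integral is a well-defined finite number. Since $\phi=1$ on a neighborhood $V$ of $x_0$ we have $D\phi=0$ a.e.\ on $V$, and since $\phi\in N^{1,p}_0(B(x_0,R))$ the field $D\phi$ vanishes a.e.\ outside $B(x_0,R)$; hence $D\phi$ is supported, up to a null set, in $\overline{B(x_0,R)}\setminus V$, a compact subset of $\Om\setminus\{x_0\}$ (we may assume $\overline{B(x_0,R)}\subset\Om$; alternatively one reduces to $\phi$ with $\supp\phi\subset\subset B(x_0,R)$ using the density of such functions in $N^{1,p}_0(B(x_0,R))$). As $u$ is $p$-harmonic in $\Om\setminus\{x_0\}$ it lies in $N^{1,p}_{\loc}(\Om\setminus\{x_0\})$, so $|Du|\in L^p$ on a neighborhood of $\supp D\phi$; then $|Du|^{p-1}\in L^{p/(p-1)}$ there, and Hölder's inequality gives $|Du|^{p-2}Du\cdot D\phi\in L^1(\Om)$.

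Next, let $\phi_1,\phi_2$ be two admissible functions, say $\phi_j\equiv 1$ on a ball $B(x_0,\delta)$, and set $\psi:=\phi_1-\phi_2$. Then $\psi\in N^{1,p}_0(B(x_0,R))$ and $\psi$ vanishes on $B(x_0,\delta)$, so its representative vanishes $p$-quasieverywhere outside the open set $W:=B(x_0,R)\setminus\overline{B(x_0,\delta/2)}$, i.e.\ $\psi\in N^{1,p}_0(W)$ with $W\subset\subset\Om\setminus\{x_0\}$. Applying the Euler--Lagrange characterization \eqref{cpl1} of the $p$-harmonicity of $u$ on $\Om\setminus\{x_0\}$ with the test function $\psi$, and using that $D\psi=0$ a.e.\ off $W$, we obtain $\int_{\Om}|Du|^{p-2}Du\cdot D\psi=\int_{W}|Du|^{p-2}Du\cdot D\psi=0$, i.e.\ $\int_{\Om}|Du|^{p-2}Du\cdot D\phi_1=\int_{\Om}|Du|^{p-2}Du\cdot D\phi_2$. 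Hence $K:=\int_{\Om}|Du|^{p-2}Du\cdot D\phi$ is independent of the admissible $\phi$, which is the assertion.

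The only genuinely delicate point is the bookkeeping in the preceding paragraph: converting "$\phi=1$ near $x_0$" into the precise statement that $D\phi$ (and $D\psi$) is supported in a compact subset of $\Om\setminus\{x_0\}$, so that the integral converges and $\psi$ qualifies as a test function in \eqref{cpl1}. Once this is settled, the conclusion is merely the weak formulation of $p$-harmonicity, and in particular no information about $\capa_p(\{x_0\})$, about the precise rate of blow-up of $u$ at $x_0$, or about the behavior of $u$ near $\partial\Om$ is needed.
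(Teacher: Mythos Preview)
Your proof is correct and follows essentially the same route as the paper: both show that for any two admissible functions $\phi_1,\phi_2$ the difference $\phi_1-\phi_2$ is a legitimate test function for the $p$-harmonicity of $u$ in $\Om\setminus\{x_0\}$, whence the two integrals agree. You add welcome detail the paper omits---namely the finiteness of the integral and the explicit identification of a set $W\subset\subset\Om\setminus\{x_0\}$ containing the support of $D\psi$---but the underlying idea is identical.
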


\begin{proof}
Let $\phi_1, \phi_2\in N^{1,Q}_0(B(x_0,R))$ be such that $\phi_1=1$ and $\phi_2=1$ in a neighborhood of $x_0$. Then $\phi_1-\phi_2\in N^{1,Q}_0(B(x_0,R)\setminus\{x_0\})$. This implies that
\[
\int_{\Omega} |Du|^{Q-2}Du\cdot D\phi =\int_{B(x_0,R)} |Du|^{Q-2}Du\cdot D\phi_1 =\int_{B(x_0,R)} |Du|^{Q-2}Du\cdot D\phi_2.
\]
The proof is complete.
\end{proof}

Finally, we complete the proof of Theorem \ref{fundamental solution}.
\begin{proof}
Let $\phi\in N^{1,Q}_0(B(x_0, R)$ for some $B(x_0, R)\subset \Omega$ be a Lipschitz continuous function and $\phi(x_0)\neq 0$. Let $0\le \eta\le 1$ be a Lipschitz function with $\eta=1$ in $B(x_0, \frac{R}{2})$, $\eta=0$ in $X\setminus B(x_0, R)$ and $|D\eta|\le C/R$ for some constant $C>0$. The function
\[
\psi=(1-\eta) \phi+\eta\phi(x_0)
\]
has compact support in $B(x_0, R)$ and equals $\phi(x_0)$ identically in a neighborhood of $x_0$. By Lemma \ref{constant}, 
$$\int_{\Omega} |Du|^{Q-2}Du\cdot D\psi  =K\phi(x_0).$$
This implies that
\[
\int_{\Omega} |Du|^{Q-2}Du\cdot D\phi\   -\int_{\Omega} |Du|^{Q-2}(Du\cdot D\phi) \eta\   +\int_{\Omega} |Du|^{Q-2}Du\cdot (\phi-\phi(x_0))D\eta\   =K\phi(x_0).
\]
Since $ |Du|^{Q-2}Du $ is integrable and $D\phi$ is bounded, we have $$ \int_{\Omega} |Du|^{Q-2}(Du\cdot D\phi) \eta= \int_{B(x_0, R)} |Du|^{Q-2}(Du\cdot D\phi) \eta$$ converges to $ 0$ as $R\to 0$. On the other hand, we have
\[
\begin{aligned}
\left|\int_{\Omega} |Du|^{Q-2}Du\cdot (\phi-\phi(x_0))D\eta\right|   &\le \sup_{B_R} |\phi-\phi(x_0)|\int_{B(x_0,R)}|Du|^{Q-1}\cdot |D\eta|  \\
&\le C\int_{B(x_0,R)\setminus\{x_0\}} |Du|^{Q-1}  ,
 \end{aligned}
\]
which tends to zero as $R\to 0$ by Lemma \ref{corollary 2 in Serrin 65}. Combining the above estimates, we obtain 
\[
\int_{\Omega} |Du|^{Q-2}Du\cdot D\phi\   =K\phi(x_0),
\]
concluding the proof.
\end{proof}

\end{document}